\documentclass[12pt,leqno,a4paper]{amsart}
\usepackage{amssymb,enumerate}
\overfullrule 5pt

\textwidth160mm
\oddsidemargin5mm
\evensidemargin5mm

 %% Fonts
 %%%%%%%%%%%%%%%%%%
\newcommand{\FF}{{\mathbb{F}}}

\newcommand{\fA}{{\mathfrak{A}}}
\newcommand{\fS}{{\mathfrak{S}}}

%\newcommand{\bX}{{\mathbf{X}}}

%\newcommand{\cF}{{\mathcal{F}}}

 %% Operatoren
 %%%%%%%%%%%%%%%%
%\newcommand{\Aut}{{\operatorname{Aut}}}
\newcommand{\Hom}{{\operatorname{Hom}}}
\newcommand{\Ind}{{\operatorname{Ind}}}
\newcommand{\Infl}{{\operatorname{Infl}}}
\newcommand{\Irr}{{\operatorname{Irr}}}
\newcommand{\Res}{{\operatorname{Res}}}
\newcommand{\St}{{\operatorname{St}}}

\newcommand{\GAP}{{\sf{GAP}}}
\newcommand{\Chevie}{{\sf{Chevie}}}

\newcommand{\GL}{{\operatorname{GL}}}
\newcommand{\PGL}{{\operatorname{PGL}}}
\newcommand{\SL}{{\operatorname{SL}}}
\newcommand{\PSL}{{\operatorname{L}}}
\newcommand{\GU}{{\operatorname{GU}}}

\newcommand{\SU}{{\operatorname{SU}}}
\newcommand{\PSU}{{\operatorname{U}}}
\newcommand{\Sp}{{\operatorname{Sp}}}
\newcommand{\PSp}{{\operatorname{S}}}
\newcommand{\GO}{{\operatorname{GO}}}
\newcommand{\SO}{{\operatorname{SO}}}
\newcommand{\OO}{{\operatorname{O}}}

\newcommand{\tw}[1]{{}^#1\!}

\let\eps=\epsilon
\let\la=\lambda

 %% theorem style plain --- default
 %%%%%%%%%%%%%%%%%%%%%%%%%%%%%%%%

\newtheorem{thm}{Theorem}[section]
\newtheorem{lem}[thm]{Lemma}

\newtheorem{cor}[thm]{Corollary}
\newtheorem{prop}[thm]{Proposition}

\newtheorem{thmA}{Theorem}

\theoremstyle{definition}
\newtheorem{exmp}[thm]{Example}

\theoremstyle{remark}

\raggedbottom

\begin{document}

%%%%%%%%%%%%%%%%%%%%%%%%%%%%%%%%%%%%%%%%%%%%%%%%%%%%%%%%%%%%%%%%%%%%%%%%%
\title{Projective indecomposable permutation modules}%\\ (for simple groups???)}
%%\title{Induced 1-PIMs for simple groups}
%%%%%%%%%%%%%%%%%%%%%%%%%%%%%%%%%%%%%%%%%%%%%%%%%%%%%%%%%%%%%%%%%%%%%%%%%

\date{\today}

\author{Gunter Malle}
\address{FB Mathematik, TU Kaiserslautern, Postfach 3049,
         67653 Kaisers\-lautern, Germany.}
\email{malle@mathematik.uni-kl.de}
\author{Geoffrey R. Robinson}
\address{Institute of Mathematics, University of Aberdeen, Aberdeen AB24 3UE,
  Scotland, United Kingdom}
\email{g.r.robinson@abdn.ac.uk}

\thanks{The first author gratefully acknowledges support by the Deutsche
 Forschungsgemeinschaft -- Project-ID 286237555-- TRR 195.}

\dedicatory{To Pham Huu Tiep on the occasion of his 60th birthday}

\keywords{permutation character, projective cover, 1-PIM}

\subjclass[2010]{20C15, 20C20, 20C30, 20C33}

\begin{abstract}
We investigate finite non-Abelian simple groups $G$ for which the projective
cover of the trivial module coincides with the permutation module on a
subgroup and classify all cases unless $G$ is of Lie type in defining
characteristic.
\end{abstract}

\maketitle

%\pagestyle{myheadings}
%%\markboth{}{}
%\markboth{for personal use only}{preliminary}

%%%%%%%%%%%%%%%%%%%%%%%%%%%%%%%%%%%%%%%%%%%%%%%%%%%%%%%%%%%%%%%%%%%%%%%%%
\section{Introduction}   \label{sec:intro}

Let $G$ be a finite group, $p$ a prime and $k$ an algebraically closed field of
characteristic~$p$. We are interested in the situation when the projective cover
$\Phi_{1_G}$ of the trivial $kG$-module $k$ is the permutation module on a
subgroup $H$ of $G$. We then say that $G$ \emph{has property $(I_p)$ with
respect to the subgroup $H$}. Note that in this case $H$ is necessarily of
$p'$-order. Thus, $G$ has property $(I_p)$ if there is a $p'$-subgroup $H$ of
$G$ such that the endomorphism ring of the permutation module on $H$ is a local
ring. Of course, $G$ has property $(I_p)$ for every prime $p$ not dividing
its order (with respect to $H=G$), so the interesting case is for the prime
divisors of $|G|$.

We investigate finite non-Abelian simple groups $G$ enjoying property
$(I_p)$ for some prime~$p$. This turns out to be quite a rare phenomenon.

\begin{thmA}   \label{thm:main}
 Let $G$ be a non-Abelian finite simple group with property $(I_p)$. Then
 either the pair $(G,p)$ is classified below, or $G$ is of Lie type in
 characteristic~$p$, or $G=J_4$ with $p=11$.
\end{thmA}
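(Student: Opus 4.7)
The plan is to verify the classification family by family via the CFSG, using general necessary conditions to reduce in each case to a finite, computable problem. First I would record the basic constraints: if $(G,H,p)$ witnesses property $(I_p)$, then as noted in the introduction $H$ is a $p'$-subgroup (so $k[G/H]$ is automatically projective), and dimension comparison forces $[G:H]=\dim\Phi_{1_G}=\sum_{\chi\in\Irr(B_0)}d_{\chi,1_G}\,\chi(1)$, a quantity read off from the principal $p$-block $B_0$ via the decomposition matrix. Indecomposability of $k[G/H]$ translates to the Hecke algebra $\operatorname{End}_{kG}(k[G/H])\cong k[H\backslash G/H]$ being local; applied to the socle of $\Phi_{1_G}$, which is just $k$, this additionally pins down the double-coset combinatorics of $(G,H)$.

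For $G$ sporadic, I would compute $\dim\Phi_{1_G}$ for every prime $p$ dividing $|G|$ from the modular character data in \GAP's character table library, then compare with the indices of $p'$-subgroups (drawn from the Atlas list of maximal subgroups), reducing the verification to a finite check. The pair $(J_4,11)$ appears precisely as the one where the numerical test is passed (forcing the existence of a candidate $p'$-subgroup of the correct index) but where the full indecomposability of $k[G/H]$ cannot be certified by routine table inspection, hence its status as an explicit exception. For $G=A_n$ alternating, I would use the James--Kerber decomposition theory of symmetric groups together with standard bounds on the orders of $p'$-subgroups of $A_n$ to reduce to finitely many small $(n,p)$ pairs, each verified directly.

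For $G$ of Lie type in non-defining characteristic, both sides of $[G:H]=\dim\Phi_{1_G}$ are polynomials in $q$. The right-hand side is governed by the principal $p$-block, whose structure is controlled by $e=\mathrm{ord}_p(q)$ via $e$-Harish-Chandra theory (Brou\'e--Malle--Michel); the left-hand side must absorb $|G|_p$, which forces a specific cyclotomic factorization $\prod\Phi_d(q)$ for $[G:H]$. Matching the two as polynomials in $q$ narrows the candidates to a finite list, which I would eliminate using explicit decomposition matrices (via \Chevie) for low-rank and exceptional types. The main obstacle will be precisely this uniform cross-characteristic Lie-type analysis: outside the abelian-defect regime covered by Brou\'e's conjecture, decomposition numbers of principal blocks are not known in full generality, so the combinatorics of $d$-cuspidal pairs and the arithmetic of cyclotomic divisors of $|G|$ must be tracked carefully across all Lie types and relevant primes $p$.
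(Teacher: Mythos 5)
Your proposal has the right global architecture (CFSG case division, numerical necessary conditions, block theory), but it is missing the two ideas that actually make the paper's proof work, and the cross-characteristic step as you describe it would fail. First, you have no mechanism for controlling which subgroups $H$ can occur. The paper's engine is the elementary observation (Lemma~\ref{lem:order}) that $|H|$ must be maximal among all $p'$-subgroups of $G$, hence in particular $|H|\ge|G|_r$ for $G$ of Lie type in characteristic $r$; this forces $H$ into one of the very few maximal subgroups of order at least $|G|_r$ classified by Liebeck and Liebeck--Saxl, and it is this, not the equation $[G:H]=\dim\Phi_{1_G}$, that reduces each family to a tractable list of overgroups. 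Without it your ``finite, computable problem'' claims are unjustified: the index condition alone neither bounds $H$ nor certifies indecomposability of $k[G/H]$ (the positive examples in the paper are certified via $2$-transitivity, Lemma~\ref{lem:2-trans}, or explicit basic-set arguments, not via the Hecke algebra).

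Second, your plan for non-defining characteristic hinges on knowing $\dim\Phi_{1_G}$ as a polynomial in $q$, and you yourself concede that the relevant decomposition numbers are not known outside special regimes --- so the step ``matching the two as polynomials in $q$'' cannot be carried out. Moreover, the outcome is not ``a finite list'': the classification contains infinite families (e.g.\ $\PSL_n(q)$ with $e_p(q)=n$, $\PSp_{2n}(2)$, the Suzuki and Ree groups), so any argument that purports to reduce to finitely many $(G,p)$ is structurally wrong. The paper circumvents the unknown decomposition numbers entirely: for a parabolic overgroup $P$ it uses Howlett--Lehrer theory to decompose $\Ind_P^G(1_P)$ into unipotent characters via Weyl-group combinatorics and then checks against the known block distribution of unipotent characters (Brou\'e--Malle--Michel, Fong--Srinivasan) that some constituent lies outside the principal block --- a contradiction with $\Ind_H^G(k)$ being indecomposable with trivial head; the whole analysis is then run as an induction on rank through the Levi subquotients using Corollary~\ref{cor:compfac}. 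A similar gap occurs in your sporadic case: decomposition matrices are known only up to $HN$, so for $Ly$, $B$, $M$ and $J_4$ one must instead use the Brauer trees of Hiss--Lux, Cameron's classification of $2$-transitive actions, and the permutation characters of the large maximal subgroups (including a detour through $\tw2E_6(2)$ for $B$ and $M$), none of which your table-lookup plan covers.
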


In all cases coming up in our classification either $p\le3$, or Sylow
$p$-subgroups of $G$ are of a very restricted form: they are either cyclic, or
Abelian of rank~2, or an extra-special group $7^{1+2}$.

The open cases are proof of the little that is known (to the authors) about
decomposition numbers of $J_4$ and, in particular, of groups of Lie type in
defining characteristic. For the exceptional groups of small rank we do obtain
complete results in Section~\ref{sec:defchar}, though.
\vskip 1pc

After collecting some elementary observations in Section~\ref{sec:prelim} we
give the proof of Theorem~\ref{thm:main} by dealing with the various classes of
finite non-Abelian simple groups according to the classification in
Sections~\ref{sec:alt}--\ref{sec:defchar}. Our approach relies on detailed
information on their maximal subgroups of large order.

%%%%%%%%%%%%%%%%%%%%%%%%%%%%%%%%%%%%%%%%%%%%%%%%%%%%%%%%%%%%%%%%%%%%%%%%%
\section{General considerations}   \label{sec:prelim}

We collect some elementary observations on projective indecomposable
permutation modules.
Willems \cite{Wi80} has studied the 1-PIM for composite groups and obtained
several reduction results which show that the main question is for non-Abelian
simple groups; see in particular Lemma~\ref{lem:normal} below.

\begin{lem}   \label{lem:order}
 Assume $G$ has property $(I_p)$ with respect to $H\le G$. Then, for any
 $p'$-subgroup $L\le G$ and any $\chi\in\Irr(G)$ we have
 $$\langle\Res_H^G(\chi),1_H\rangle \leq \langle\Res_L^G(\chi),1_L\rangle.$$
 In particular, $|H|$ is maximal among all $p'$-subgroups of $G$.
\end{lem}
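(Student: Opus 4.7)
\emph{Plan.}
The strategy is to interpret both sides of the inequality as multiplicities of $\chi$ in the ordinary character of an appropriate projective $kG$-module, and then to exploit the non-negativity of ordinary decomposition numbers.

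First, I would reformulate property $(I_p)$ character-theoretically. Because $H$ has $p'$-order the permutation module $k[G/H]$ is projective, and its ordinary character is simply $\Ind_H^G 1_H$. The hypothesis $\Phi_{1_G} \cong k[G/H]$ therefore says that $\Ind_H^G 1_H$ is exactly the ordinary character of the projective cover of the trivial $kG$-module. Frobenius reciprocity then rewrites the left-hand side as
$$\langle \Res_H^G \chi, 1_H\rangle = \langle \chi, \Ind_H^G 1_H\rangle,$$
that is, as the multiplicity of $\chi$ in the ordinary character of $\Phi_{1_G}$.

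Next I would apply the same idea to $L$. Since $L$ is a $p'$-subgroup, $k[G/L]$ is projective and decomposes as $k[G/L] = \bigoplus_{\varphi} \Phi_\varphi^{n_\varphi}$ with $n_\varphi \in \ZZ_{\ge 0}$, the sum being over irreducible $kG$-modules $\varphi$; Frobenius reciprocity moreover gives $n_{1_G} = \dim \Hom_{kG}(k[G/L], k) = 1$. Letting $\Psi_\varphi$ denote the ordinary character of $\Phi_\varphi$, the identity $\Ind_L^G 1_L = \sum_\varphi n_\varphi \Psi_\varphi$ together with the non-negativity of all decomposition numbers $\langle \chi, \Psi_\varphi\rangle$ and all $n_\varphi$ yields
$$\langle \Res_L^G \chi, 1_L\rangle = \sum_{\varphi} n_\varphi \langle \chi, \Psi_\varphi\rangle \ \geq\ \langle \chi, \Psi_{1_G}\rangle = \langle \Res_H^G \chi, 1_H\rangle,$$
which is the main inequality.

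For the ``in particular'' part, I would apply the displayed inequality to each $\chi \in \Irr(G)$, multiply by $\chi(1)$, and sum over $\Irr(G)$. Since $\rho_G = \sum_\chi \chi(1)\chi$ is the regular character and $\langle \Res_M^G \rho_G, 1_M\rangle = |G|/|M|$ for every subgroup $M \leq G$, the sum of the inequalities reads $|G|/|H| \leq |G|/|L|$, i.e.\ $|L| \leq |H|$. I do not anticipate any serious obstacle: the proof is essentially Frobenius reciprocity combined with the standard positivity of decomposition numbers and of PIM-multiplicities inside a projective module. The one conceptual point that needs to be made carefully is the translation of $(I_p)$ into the identification of $\Ind_H^G 1_H$ with the ordinary character of the projective cover of the trivial $kG$-module.
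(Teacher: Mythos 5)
Your proof is correct and follows essentially the same route as the paper: both identify $\Phi_{1_G}$ (with ordinary character $\Ind_H^G 1_H$) as a direct summand of the projective module $\Ind_L^G(k)$ and then apply Frobenius reciprocity, the paper phrasing this as the statement that $\Ind_L^G(1_L)-\Ind_H^G(1_H)$ is a genuine character. Your explicit PIM decomposition and the degree computation for the ``in particular'' clause are just slightly more detailed versions of the same argument.
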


\begin{proof}
We note that the projective cover of the trivial $kG$-module is a direct
summand of $\Ind_L^G(k)$ (with trivial action of $L$ on $k$). Hence the virtual
character afforded by $\Ind_L^G(k)-\Ind_H^G(k)$ is in fact a character. The
claimed inequality follows by Frobenius reciprocity. Observe that $H$ has to be
of $p'$-order as $\Ind_H^G(k)$ is projective by assumption.
\end{proof}

\begin{lem}   \label{lem:2-trans}
 Assume that $G$ acts $2$-transitively on the set of cosets of $H\le G$. Then
 $G$ has property $(I_p)$ with respect to $H$ for any prime $p$ dividing $G$ but
 not dividing $|H|$.
\end{lem}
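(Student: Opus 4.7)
The plan is to show directly that $M:=\Ind_H^G(k)$ is the projective cover $\Phi_{1_G}$ of the trivial $kG$-module. First, since $p\nmid|H|$, the algebra $kH$ is semisimple and the trivial $kH$-module is projective; induction preserves projectivity, so $M$ is a projective $kG$-module. The natural augmentation provides a surjection $M\twoheadrightarrow k$, so $\Phi_{1_G}$ appears as a direct summand of $M$, and it suffices to prove that $M$ is indecomposable.

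To establish indecomposability, I pass to characteristic zero. The ordinary permutation character of $G$ on the cosets of $H$ equals $1_G+\chi$ for some irreducible character $\chi$ of $G$, because the action is of rank~$2$ by hypothesis. Suppose for contradiction that $M=A\oplus B$ is a nontrivial decomposition of $kG$-modules. Both summands are projective, hence lift to projective $\mathcal{O}G$-lattices (over a complete discrete valuation ring $\mathcal{O}$ with residue field $k$), and their ordinary characters are honest (non-virtual) characters of $G$ summing to $1_G+\chi$.

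Since $1_G$ and $\chi$ are distinct irreducibles, the only way to write $1_G+\chi$ as a sum of two nonzero characters is to take the two summands to be $1_G$ and $\chi$ (in some order). Without loss of generality the lift of $A$ has character $1_G$, so $\dim_k A=1$ and $A\cong k$ as a $kG$-module. But $p$ divides $|G|$, hence the trivial module is not projective, a contradiction. Therefore $M$ is indecomposable, and consequently $M=\Phi_{1_G}$ as required.

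The step that deserves most care is the appeal to the lifting of projective $kG$-modules, together with the fact that the ordinary character of such a lift is an honest character and not merely a virtual one; once this is in place, the $2$-transitivity hypothesis enters only to guarantee that the permutation character has exactly two irreducible constituents, and the remainder of the argument is immediate.
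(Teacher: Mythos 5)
Your proof is correct and follows essentially the same route as the paper: projectivity from $p\nmid|H|$, the two-constituent character from $2$-transitivity, and indecomposability because a putative summand lifting the trivial character would make $k$ projective. You have simply made explicit the lifting argument that the paper leaves implicit in its final sentence.
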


\begin{proof}
If $G$ acts 2-transitively on the cosets of $H$ then $\Ind_H^G(1_H)$ has
exactly two ordinary irreducible constituents. If $H$ is a $p'$-group then this
module is projective, and indecomposable since the trivial $kG$-module is not
projective when $p$ divides $|G|$.
\end{proof}

The following is a kind of weak converse:

\begin{lem}   \label{lem:cyc}
 Let $G$ be a finite group with cyclic Sylow $p$-subgroups. Assume that the
 trivial character is not connected to the exceptional node on the $p$-Brauer
 tree of $G$. If $G$ has property $(I_p)$ with respect to $H\le G$ then
 $G$ acts $2$-transitively on the set of cosets of~$H$.
\end{lem}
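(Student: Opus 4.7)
The plan is to exploit the cyclic-defect theory to force the ordinary character of $\Phi_{1_G}$ to have exactly two distinct irreducible constituents, each of multiplicity one, and then to invoke the standard character-theoretic criterion for $2$-transitivity.

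First, since $H$ is a $p'$-subgroup of $G$, the induced module $\Ind_H^G(k)$ is projective, and hypothesis $(I_p)$ identifies it with $\Phi_{1_G}$. Consequently, the permutation character $\pi_H:=\Ind_H^G(1_H)$ coincides with the ordinary character lifted from $\Phi_{1_G}$. Next I would appeal to the well-known description of PIM-characters for a block with cyclic defect: the principal block $B_0$ of $G$ has a Brauer tree whose edges are in bijection with the simple $kG$-modules in $B_0$ and whose vertices correspond to the ordinary characters in $B_0$ (with a possibly exceptional vertex accounting for several irreducibles, each occurring with multiplicity one in the relevant PIM). For each edge, the ordinary character of the associated PIM is the sum of the ordinary characters attached to its two endpoints. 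The trivial Brauer character labels an edge having $1_G$ at one endpoint; by hypothesis the other endpoint is \emph{not} the exceptional vertex, so it contributes exactly one further ordinary irreducible $\chi\in\Irr(G)\setminus\{1_G\}$. Therefore $\pi_H=1_G+\chi$.

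Finally, a transitive permutation character decomposes as $1_G+\chi$ with $\chi\in\Irr(G)$ if and only if the action is $2$-transitive; indeed the rank of the action equals $\langle\pi_H,\pi_H\rangle$, which is $2$ precisely in this situation. As the action of $G$ on cosets of $H$ is transitive by construction, the claim follows. I anticipate no serious technical obstacle: the argument is a direct consequence of the edge-PIM rule on the Brauer tree combined with the rank criterion, and the only point requiring mild care is the correct use of the non-exceptionality hypothesis to conclude that the second endpoint contributes a single irreducible rather than a sum of exceptional ones.
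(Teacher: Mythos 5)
Your proposal is correct and follows essentially the same route as the paper: cyclic-defect theory plus the non-exceptionality hypothesis show that $\Phi_{1_G}$ has exactly two ordinary constituents, namely $1_G$ and one other irreducible, and since $\Phi_{1_G}=\Ind_H^G(1_H)$ the permutation character has rank $2$, i.e.\ the action is $2$-transitive. You merely spell out the edge--PIM rule on the Brauer tree in more detail than the paper does; no gap.
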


\begin{proof}
By the well-known theory of blocks with cyclic defect, the assumption on the
Brauer tree implies that the projective cover $\Phi_{1_G}$ of the trivial
$kG$-module has just two ordinary constituents. As $\Phi_{1_G}=\Ind_H^G(1_H)$
by assumption, the permutation character of $G$ on the cosets of $H$ has just
two constituents, and so the action is 2-transitive.
\end{proof}

We now note two results that are important for induction purposes, the first of
which is clear:

\begin{lem}   \label{lem:overgroup}
 Let $H\le L\le G$. If $G$ has property $(I_p)$ with respect to $H$ then
 $L$ has property $(I_p)$ with respect to $H$.
\end{lem}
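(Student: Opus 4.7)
The plan is to exploit transitivity of induction. Since $H\le L\le G$, we have the identification
$$\Ind_H^G(k) = \Ind_L^G\bigl(\Ind_H^L(k)\bigr).$$
By hypothesis the left-hand side is $\Phi_{1_G}$, which is indecomposable and projective as a $kG$-module. My goal is to show that the inner module $\Ind_H^L(k)$ is the projective cover of the trivial $kL$-module.

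First I would verify projectivity of $\Ind_H^L(k)$ as a $kL$-module. Since $G$ has property $(I_p)$ with respect to $H$, the subgroup $H$ is a $p'$-group, and induction from any $p'$-subgroup to an overgroup yields a projective module. Hence $\Ind_H^L(k)$ is projective.

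Next I would argue indecomposability. Any decomposition $\Ind_H^L(k) = M_1\oplus M_2$ would induce a decomposition $\Ind_L^G(M_1)\oplus\Ind_L^G(M_2) = \Ind_H^G(k)$. Because $\Ind_L^G$ takes nonzero modules to nonzero modules and $\Ind_H^G(k)$ is indecomposable, one of $M_1,M_2$ must be zero; thus $\Ind_H^L(k)$ is indecomposable.

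Finally, Frobenius reciprocity gives $\Hom_{kL}(\Ind_H^L(k),k)\cong\Hom_{kH}(k,k)\ne0$, so the trivial $kL$-module occurs as a quotient of $\Ind_H^L(k)$. An indecomposable projective with the trivial module as a quotient is the projective cover of the trivial module, i.e.\ $\Ind_H^L(k)=\Phi_{1_L}$, which is exactly the statement that $L$ has property $(I_p)$ with respect to $H$. No step presents a genuine obstacle here, which is why the authors call the lemma clear.
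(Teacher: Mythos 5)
Your proof is correct and is exactly the standard argument the authors have in mind when they declare the lemma ``clear'' and omit the proof: transitivity of induction, projectivity of $\Ind_H^L(k)$ from $H$ being a $p'$-group, indecomposability via Krull--Schmidt applied to $\Ind_L^G$, and Frobenius reciprocity to identify the head. Every step checks out, so nothing further is needed.
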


Now let $N\unlhd G$ be a normal subgroup. Then by \cite[Lemma~2.6]{Wi80} we have
$$\dim\Phi_{1_G}=\dim\Phi_{1_{G/N}}\,\dim\Phi_{1_N}.\eqno{(*)}$$

\begin{lem}   \label{lem:normal}
 Assume $G$ has property $(I_p)$ with respect to $H\le G$. Then for any
 normal subgroup $N\unlhd G$, $G/N$ has property $(I_p)$ with respect to $HN/N$
 and $N$ has property $(I_p)$ with respect to $N\cap H$.
\end{lem}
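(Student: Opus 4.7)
The plan is to handle the two assertions independently, in each case transporting the identity $\Phi_{1_G}=\Ind_H^G(k)$ through an appropriate functor.

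For the quotient statement, I would apply the $N$-coinvariants functor $(-)_N$ to both sides. The permutation-module side is immediate: $(k[G/H])_N$ is the $k$-span of the $N$-orbits on $G/H$, i.e.\ of the double cosets in $N\backslash G/H$; since $N$ is normal these double cosets coincide with left cosets of $NH$, so $(k[G/H])_N \cong k[G/NH] = \Ind_{HN/N}^{G/N}(k)$ as $k[G/N]$-modules. For the projective-cover side, since $(-)_N$ is left adjoint to inflation and inflation is exact, $(-)_N$ preserves projectives; furthermore, as a quotient of $\Phi_{1_G}$, the module $(\Phi_{1_G})_N$ inherits the property that its only simple $kG$-quotient is the trivial module, and the same therefore holds among its simple $k[G/N]$-quotients. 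Hence $(\Phi_{1_G})_N$ is projective with trivial head, so it equals $\Phi_{1_{G/N}}$, and equating the two sides proves the first claim.

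For the normal subgroup $N$: since $H$ is a $p'$-group (which is built into property $(I_p)$, cf.\ Lemma~\ref{lem:order}), so is $N\cap H$, and $\Ind_{N\cap H}^N(k)$ is a projective $kN$-module. Its augmentation map exhibits the trivial $kN$-module as a quotient, so $\Phi_{1_N}$ occurs as a direct summand. I would then compare dimensions via $(*)$: from $\dim\Phi_{1_G}=[G:H]$ and the first part $\dim\Phi_{1_{G/N}}=[G:NH]$ one obtains $\dim\Phi_{1_N} = [NH:H] = [N:N\cap H] = \dim\Ind_{N\cap H}^N(k)$. The summand therefore exhausts the module, giving $\Phi_{1_N} = \Ind_{N\cap H}^N(k)$.

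The main technical step I expect is the identification $(\Phi_{1_G})_N = \Phi_{1_{G/N}}$, which rests on the coinvariants/inflation adjunction together with the observation that every simple $k[G/N]$-quotient of $(\Phi_{1_G})_N$ is already a simple $kG$-quotient, hence trivial. Once this is in place, the rest of the argument is formal: the $N$-coinvariants of a permutation module are computed by counting double cosets, and the multiplicativity formula $(*)$ upgrades the resulting dimension equality to a module equality via the summand argument.
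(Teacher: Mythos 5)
Your argument is correct, and for the quotient group it takes a genuinely different route from the paper. The paper proves both claims simultaneously by a single dimension count: Willems' formula $(*)$ gives $\dim\Phi_{1_{G/N}}\dim\Phi_{1_N}=\dim\Phi_{1_G}=|G:H|=|G/N:HN/N|\,|N:H\cap N|$, and since the 1-PIM of any group is a summand of the permutation module on any $p'$-subgroup (the observation from Lemma~\ref{lem:order}), both factors on the left are bounded above by the corresponding factors on the right, forcing equality in each and hence equality of modules. Your treatment of $G/N$ instead identifies $(\Phi_{1_G})_N$ with $\Phi_{1_{G/N}}$ structurally via the coinvariants/inflation adjunction and computes the coinvariants of the permutation module by counting $N$-orbits on $G/H$; this is sound (the head of $(\Phi_{1_G})_N$ is simple because it is a nonzero $kG$-quotient of a module with simple head, so the multiplicity-one issue you gloss over does resolve), and it in effect re-derives the quotient half of Willems' multiplicativity rather than quoting it. For $N$ itself you then fall back on $(*)$ together with the already-established quotient case, which is essentially the paper's squeeze restricted to one factor. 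The paper's proof is shorter and symmetric in the two claims; yours is more self-contained on the quotient side and makes the isomorphism $\Phi_{1_{G/N}}\cong(\Ind_H^G k)_N$ explicit rather than inferring it from a dimension equality.
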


\begin{proof}
Clearly, both $HN/N\cong H/(H\cap N)$ and $H\cap N$ are $p'$-groups, and
by~$(*)$ we have
$$\dim\Phi_{1_{G/N}}\dim\Phi_{1_N}=\dim\Phi_{1_G}=|G:H|
  =|G/N:HN/N|\,|N:H\cap N|.$$
Since $\dim\Phi_{1_{G/N}}\le|G/N:HN/N|$ and $\dim\Phi_{1_N}\le|N:H\cap N|$ the
claim follows.
\end{proof}

While the converse holds, for example, for $p$-solvable groups, since for
these, Hall $p'$-subgroups are the unique conjugacy class of maximal
$p'$-subgroups, it is not true in general: $G=\PSL_2(7)$ has property $(I_7)$
with respect to a subgroup $H=\fS_4$, but $\hat G=\PGL_2(7)$ does not satisfy
$(I_7)$. Similarly, $G=\fA_7$ has property~$(I_5)$ with respect to
$H=\PSL_2(7)$, but $\hat G=\fS_7$ does not satisfy $(I_5)$. In both cases, the
ambient $p'$-subgroup $H$ of $G$ is not stable under the outer automorphism
of~$G$ induced by $\hat G$.

\begin{cor}   \label{cor:compfac}
 Assume $G$ has property $(I_p)$. Then any non-Abelian simple composition factor
 of $G$ satisfies $(I_p)$.
\end{cor}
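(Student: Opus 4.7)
The plan is to deduce this directly from Lemma~\ref{lem:normal} by induction on the composition length of $G$. The two halves of that lemma give us exactly what we need: passing to a normal subgroup preserves $(I_p)$, and passing to a quotient by a normal subgroup preserves $(I_p)$. Composition factors are built by doing both of these operations, so the result should fall out.

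More precisely, I would argue as follows. Let $S$ be a non-Abelian simple composition factor of $G$. Then there exists a normal subgroup $N\unlhd G$ such that $S$ is isomorphic either to a composition factor of $G/N$ or to a composition factor of $N$. I would set up an induction on $|G|$, with the base case being $G$ simple (where the statement is vacuous unless $S=G$, in which case there is nothing to prove). Choose a minimal normal subgroup $N$ of $G$ (or alternatively a maximal normal subgroup, whichever makes the bookkeeping cleanest). By Lemma~\ref{lem:normal}, both $N$ and $G/N$ inherit property $(I_p)$ from $G$, with respect to $N\cap H$ and $HN/N$ respectively. Since $|N|<|G|$ and $|G/N|<|G|$ (as $G$ is not simple in the inductive step), the induction hypothesis applies to each, giving $(I_p)$ for every non-Abelian simple composition factor of $N$ and of $G/N$. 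Since every composition factor of $G$ appears among these two lists (by Jordan--H\"older), the claim follows.

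There is no real obstacle: the corollary is essentially a formal consequence of Lemma~\ref{lem:normal}, which is why it is labelled a corollary. The only point requiring any care is to verify that $N\cap H$ is a $p'$-subgroup of $N$ and $HN/N\cong H/(H\cap N)$ is a $p'$-subgroup of $G/N$ — but both follow immediately because $H$ itself is a $p'$-group, and this is already observed in the proof of Lemma~\ref{lem:normal}.
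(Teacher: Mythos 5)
Your argument is correct and is exactly the route the paper intends: the corollary is stated without proof as an immediate consequence of Lemma~\ref{lem:normal}, and your induction on $|G|$ (both $N$ and $G/N$ inherit $(I_p)$, and Jordan--H\"older splits the composition factors of $G$ between those of $N$ and $G/N$) is the standard way to make that precise. No gaps.
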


Let us remark, though, that not every subgroup of a group satisfying $(I_p)$
also does: while $G=\PSL_3(4)$ has property~$(I_3)$, its maximal subgroup
$\fA_6$ does not.

\begin{lem}   \label{lem:fp}
 If $G$ satisfies $(I_p)$ with respect to $H$, then no non-trivial simple
 $kG$-module has $H$-fixed points.
\end{lem}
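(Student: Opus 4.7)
I would prove this by a one-line Frobenius reciprocity argument coupled with the universal property of the projective cover; I do not anticipate any substantive obstacle.

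Concretely, given a simple $kG$-module $S$, the first step is to rewrite the space of $H$-fixed points in homological terms. Since $k$ carries the trivial $H$-action, Frobenius (Nakayama) reciprocity gives
\[
S^H\;\cong\;\Hom_{kH}(k,\Res_H^GS)\;\cong\;\Hom_{kG}(\Ind_H^Gk,S).
\]
The second step is to invoke the hypothesis: property $(I_p)$ with respect to $H$ means precisely that $\Ind_H^Gk\cong\Phi_{1_G}$, so the right-hand side becomes $\Hom_{kG}(\Phi_{1_G},S)$.

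The third and final step is the standard fact that the projective cover of a simple module has that simple module as its head. Thus $\Phi_{1_G}$ is an indecomposable projective with head isomorphic to $k$, so any nonzero $kG$-homomorphism from $\Phi_{1_G}$ to a simple module $S$ must factor through its head and therefore force $S\cong k$. Consequently, for every non-trivial simple $kG$-module $S$ we obtain $\Hom_{kG}(\Phi_{1_G},S)=0$, and combining with the identification above yields $S^H=0$, as claimed. The entire argument is purely formal, and no case analysis or structural input about $G$ is required beyond what is built into $(I_p)$.
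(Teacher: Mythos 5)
Your proposal is correct and follows exactly the same route as the paper's proof: Frobenius reciprocity identifies $S^H$ with $\Hom_{kG}(\Ind_H^G(k),S)=\Hom_{kG}(\Phi_{1_G},S)$, and the fact that the head of $\Phi_{1_G}$ is the trivial module forces $S\cong k$ whenever this is nonzero. Nothing is missing.
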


\begin{proof}
Assume $\Ind_H^G(k)=\Phi_{1_G}$. If $H$ has non-zero fixed points on the
simple $kG$-module $S$, then $\Hom_{kH}(k,\Res^G_H(S))\neq0$, so
$\Hom_{kG}(\Ind_H^G(k),S)) \neq 0$ by Frobenius reciprocity. Since the head
of $\Phi_{1_G}$ is simple, and equal to the trivial $kG$-module $k$, this
implies $S\cong k$.
\end{proof}

We have the following consequence for the number $l(G)$ of irreducible
$p$-Brauer characters of a group $G$ satisfying our requirement:

\begin{lem}   \label{lem:l(G)}
 Let $G$ be a finite group and $p$ a prime. Assume $G$ has property $(I_p)$ with
 respect to $H\le G$. Then $l(G)\le |H|$.
\end{lem}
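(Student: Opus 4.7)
The plan is to bound $l(G)$ via the dimension identity
\[
|G| \;=\; \sum_S (\dim S)(\dim \Phi_S),
\]
summed over a complete set of simple $kG$-modules $S$, combined with the value $\dim \Phi_{1_G} = [G:H]$ supplied by $(I_p)$. The aim is to establish the pointwise inequality
\[
(\dim S)(\dim \Phi_S) \;\ge\; [G:H]
\]
for every simple $kG$-module $S$; summing this over $S$ will yield $|G| \ge l(G) \cdot [G:H]$, equivalently $l(G) \le |G|/[G:H] = |H|$, which is exactly the required bound.

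For $S = k$ the inequality is an equality under $(I_p)$. For the remaining simples I would exploit the classical divisibility $|G|_p \mid \dim \Phi_S$, valid for every projective $kG$-module (since $\Phi_S$ is free as a $kP$-module for any Sylow $p$-subgroup $P$). Because $H$ has $p'$-order, $|G|_p$ also divides $[G:H] = |G|/|H|$. In the favourable situation $|H| = |G|_{p'}$ (for instance when $G$ possesses a Hall $p'$-subgroup) we have $[G:H] = |G|_p$, whence $\dim \Phi_S \ge |G|_p = [G:H]$ directly; together with $\dim S \ge 1$ this closes the argument.

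The main obstacle is the case $|H| < |G|_{p'}$, in which $[G:H]$ is a proper multiple of $|G|_p$ and the Sylow divisibility alone is too coarse. To bridge the gap I would leverage Lemma~\ref{lem:fp}: since no non-trivial simple $kG$-module carries $H$-fixed vectors, the restriction $\Res_H^G \Phi_S$ is a semisimple $kH$-module whose constituents are heavily constrained. Via the Frobenius-reciprocity identity $\dim \Hom_{kG}(\Phi_S, \Phi_{1_G}) = \dim (\Phi_S)^H = [\Phi_S : k]$, one should be able to convert these constraints into the sharper lower bound $(\dim S)(\dim \Phi_S) \ge [G:H]$; making this precise in full generality is the technical crux of the proof.
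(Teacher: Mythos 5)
Your skeleton matches the paper's: start from $|G|=\sum_S(\dim S)(\dim P_S)$ and try to prove the pointwise bound $(\dim S)(\dim P_S)\ge\dim\Phi_{1_G}=|G:H|$ for every simple $S$, then sum. But you only establish that bound in the special case $|H|=|G|_{p'}$, and you explicitly leave the general case --- which is the whole content of the lemma, since nothing forces $H$ to be a Hall $p'$-subgroup --- as an unproved ``technical crux''. The divisibility $|G|_p\mid\dim P_S$ cannot close this gap: when $|H|<|G|_{p'}$ the index $|G:H|$ is a proper multiple of $|G|_p$, and $\dim P_S$ alone need not reach it. Your proposed bridge via Lemma~\ref{lem:fp} and the identity $\dim(\Phi_S)^H=[\Phi_S:k]$ is not carried out, and it is aimed at the wrong quantity: the absence of $H$-fixed points on non-trivial simples controls the multiplicity of $k$ in $\Phi_S$, not the product $(\dim S)(\dim\Phi_S)$.

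The missing idea is a one-line tensor argument. Since $P_S$ is projective, so is $S^*\otimes P_S$; and $\Hom_{kG}(S^*\otimes P_S,k)\cong\Hom_{kG}(P_S,S)\ne0$, so the trivial module is a quotient of the projective module $S^*\otimes P_S$, whence $\Phi_{1_G}$ is a direct summand of it. Therefore $(\dim S)(\dim P_S)=\dim(S^*\otimes P_S)\ge\dim\Phi_{1_G}$ for \emph{every} simple $S$, with no case distinction and no appeal to Sylow divisibility. Summing over the $l(G)$ simples gives $|G|\ge l(G)\dim\Phi_{1_G}=l(G)\,|G:H|$, i.e.\ $l(G)\le|H|$.
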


\begin{proof}
We have $|G| = \sum_S\dim S\cdot\dim P_S$, where the sum runs over the $l(G)$
isomorphism classes of simple $kG$-modules and $P_S$ is the projective cover
of~$S$. But notice that $S^*\otimes P_S$ has the projective cover of the trivial
module as a summand.

Hence we have $|G| \geq l(G)\dim\Phi_{1_G}$. If $\Phi_{1_G}$ is the
permutation module on the cosets of~$H$, then we obtain
$|H|\dim\Phi_{1_G} = |G| \geq l(G)\dim\Phi_{1_G}$, so that $l(G) \leq |H|$, as
claimed.
\end{proof}

In particular the proof shows that assuming $G$ satisfies $(I_p)$ with respect
to $H\le G$, $|G:H|\le\chi(1)^2$ for every $\chi\in\Irr(G)$ of $p$-defect zero.

%%%%%%%%%%%%%%%%%%%%%%%%%%%%%%%%%%%%%%%%%%%%%%%%%%%%%%%%%%%%%%%%%%%%%%%%%
\section{Alternating groups} \label{sec:alt}

\begin{thm}   \label{thm:alt}
 The alternating group $\fA_n$, $n\ge5$, has property $(I_p)$ for $p\le n$ if
 and only if we are in one of the cases of Table~\ref{tab:alt}.
\end{thm}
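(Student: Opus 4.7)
The plan is to prove both directions separately.

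For the \emph{if} direction, I verify each entry of Table~\ref{tab:alt}. Most cases arise from a $2$-transitive action of $\fA_n$ on cosets of a $p'$-subgroup $H$, so Lemma~\ref{lem:2-trans} applies directly. The $2$-transitive actions of $\fA_n$ are classically known, consisting of the natural $n$-point action together with sporadic actions for small values of $n$ (e.g., $\fA_5$ on $6$ points, $\fA_6$ on $10$ points, $\fA_7$ and $\fA_8$ on $15$ points). For each such action I would check that the stabiliser is a $p'$-group for the stated prime. Any remaining table entries would be verified by computing $\Phi_{1_{\fA_n}}$ and $\Ind_H^{\fA_n}(1_H)$ directly from the $p$-Brauer character table via \GAP.

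For the \emph{only if} direction, suppose $\fA_n$ has $(I_p)$ with respect to $H\le\fA_n$, where $p\le n$. Lemma~\ref{lem:order} pins down $|H|=|\fA_n|/\dim\Phi_{1_{\fA_n}}$ and forces $|H|$ to be maximal among orders of $p'$-subgroups of $\fA_n$. My plan is first to compute $\dim\Phi_{1_{\fA_n}}$ from the $p$-decomposition matrix of $\fA_n$, thereby fixing the required $|H|$. I would then use the classification of maximal subgroups of $\fA_n$ (via O'Nan--Scott) to enumerate the $p'$-subgroups of this order; in most pairs $(n,p)$ no such subgroup exists and property $(I_p)$ already fails. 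For any surviving candidate $H$, Lemma~\ref{lem:fp} (no nontrivial simple $k\fA_n$-module has $H$-fixed points) and Lemma~\ref{lem:l(G)} ($l(\fA_n)\le|H|$) eliminate many further candidates, and in the last remaining cases I would compute $\Ind_H^{\fA_n}(1_H)$ explicitly and compare with $\Phi_{1_{\fA_n}}$.

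The main obstacle is handling infinitely many $n$. Here I would use a growth argument: for each fixed $p$, $\dim\Phi_{1_{\fA_n}}$ grows with $n$ faster than is compatible with $|\fA_n|/|H|$ equalling it for a $p'$-subgroup $H$ of any plausible maximal order. This reduces to a finite range of $n$ for each $p$, at which point explicit computation in \GAP\ completes the argument. The hardest case is small $p$ (especially $p=2,3$), where the decomposition matrices of $\fA_n$ are only partially understood for large $n$ and many maximal subgroups contain large $p'$-subgroups. Here I would supplement the growth bound with Lemma~\ref{lem:overgroup} applied to a chain $\fA_m\le\fA_n$ to reduce inductively, leaving a finite base case checkable by computer.
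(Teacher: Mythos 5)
Your \emph{if} direction is fine and matches the paper. The \emph{only if} direction, however, has a genuine gap at its foundation: you propose to ``compute $\dim\Phi_{1_{\fA_n}}$ from the $p$-decomposition matrix of $\fA_n$'' and then to run a growth argument on this dimension. The $p$-decomposition matrices of $\fA_n$ are not known for general $n$ (and are especially opaque exactly in the cases you flag as hardest, $p=2,3$, but also whenever Sylow $p$-subgroups are non-cyclic), so this quantity is simply not available, and your growth claim that $\dim\Phi_{1_{\fA_n}}$ ``grows faster than is compatible'' is asserted rather than proved. The paper deliberately never computes $\dim\Phi_{1_{\fA_n}}$. Instead it uses two substitutes: (a) block-theoretic obstructions --- if $H\le M$ then $\Ind_M^{\fA_n}(1_M)$ is a summand of $\Ind_H^{\fA_n}(1_H)$, so exhibiting an ordinary constituent of the permutation character on $M$ lying outside the principal $p$-block (via hook/core combinatorics, \cite{PW11}, and Brauer trees for auxiliary primes $r$ with cyclic Sylow $r$-subgroups) rules $M$ out; and (b) pure order comparisons between candidate $p'$-subgroups via Lemma~\ref{lem:order}, which only requires maximality of $|H|$, not its exact value.

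The second missing ingredient is the mechanism that tames infinitely many $n$. The paper first proves (Propositions~\ref{prop:alt trans} and~\ref{prop:alt prim}) that a minimal counter-example $H$ must be transitive and primitive, by embedding $H$ in a maximal intransitive or imprimitive overgroup, descending to the factors with Lemma~\ref{lem:overgroup} and Corollary~\ref{cor:compfac}, and applying induction plus the block-theoretic exclusions above. It then invokes Mar\'oti's explicit bounds \cite{Ma02} on orders of primitive permutation groups ($|H|<2^n$ for $n>24$, or $|H|<n^{\sqrt n}$ for non-3-transitive primitive groups) and exhibits explicit larger $p'$-subgroups such as $\fS_4^a\fS_b\cap\fA_n$ or wreath products of $F_{20}$, so that Lemma~\ref{lem:order} yields a contradiction. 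Your O'Nan--Scott remark gestures at the right trichotomy, and your appeal to Lemma~\ref{lem:overgroup} along a chain $\fA_m\le\fA_n$ is essentially the intransitive case, but without the primitive-order bounds and the block-theoretic elimination of intransitive and imprimitive overgroups the argument does not close.
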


\begin{table}[htb]
\caption{Induced 1-PIMs for alternating groups}   \label{tab:alt}
$\begin{array}{cccr}
 G& H& p& \dim\Phi_{1_G}\kern -15pt\\
\hline
 \fA_n&     \fA_{n-1}& n& n\\
 \fA_5&           C_5& 2& 12\\
 \fA_5&           D_5& 3& 6\\
 \fA_6&         C_3^2& 2& 40\\
 \fA_6&         3^2.4& 5& 10\\
 \fA_7&     \PSL_3(2)& 5& 15\\
 \fA_8& 2^3.\PSL_3(2)& 5& 15\\
\end{array}$
\end{table}

All entries in the table are indeed examples: a Sylow $p$-subgroup of $\fA_p$
is cyclic, and the action on $\fA_{p-1}$ is 2-transitive, this gives the
infinite series. The additional examples for $n=5,6,7,8$ can easily be checked
from the decomposition matrices in \GAP\ \cite{GAP}.
The proof of the converse proceeds by considering the various types of
subgroups $H$ of~$\fA_n$.

\begin{prop}   \label{prop:alt trans}
 Assume that $\fA_n$, $n\ge5$, is a minimal counter-example to
 Theorem~\ref{thm:alt} with respect to $H$. Then $H$ is a transitive
 subgroup; in particular, $p{\not|}n$.
\end{prop}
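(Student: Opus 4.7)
The plan is to assume $H$ is intransitive with orbits $O_1,\ldots,O_r$ ($r\ge 2$) of sizes $k_1,\ldots,k_r$, and derive a contradiction either from Lemma~\ref{lem:fp} or from the minimality of $n$. The main tool is the natural (deleted permutation) module.

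Let $V=k^n$ be the natural permutation $k\fA_n$-module, $V_0=\{v:\sum v_i=0\}$, and $V_1=\langle\mathbf{1}\rangle$. The characteristic vectors $\chi_{O_i}$ span a subspace of $V^H$ of dimension $r$, so $V_0^H$ has dimension at least $r-1$. When $p\nmid n$, $V_0$ is a simple non-trivial $k\fA_n$-module of dimension $n-1$, so $V_0^H\ne0$ contradicts Lemma~\ref{lem:fp}. Hence $p\mid n$. In that case $V_1\subseteq V_0$ and $W:=V_0/V_1$ is a simple non-trivial $k\fA_n$-module of dimension $n-2\ge3$. A direct computation shows $W^H\ne0$ in every configuration \emph{except} $r=2$ with both orbit sizes coprime to $p$: a short cocycle argument (using $\Hom(H,(k,+))=0$ since $H$ is a $p'$-group) shows that in this edge case $W^H=V_0^H/V_1=0$, so the natural-module obstruction fails.

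To handle this remaining case ($r=2$, $p\mid n$, $p\nmid k_i$), apply Lemma~\ref{lem:overgroup} to the maximal intransitive subgroup $M=(\fS_{k_1}\times\fS_{k_2})\cap\fA_n$ containing $H$, and then Lemma~\ref{lem:normal} iteratively to $\fA_{k_1}\times\fA_{k_2}\unlhd M$ and to $\fA_{k_i}\unlhd\fA_{k_1}\times\fA_{k_2}$, to deduce that each $\fA_{k_i}$ (when $k_i\ge5$) satisfies $(I_p)$ with respect to the appropriate intersection. Since $p\nmid k_i$ rules out the infinite series row of Table~\ref{tab:alt}, minimality of $n$ forces each $(\fA_{k_i},p)$ into the small exceptional rows. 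Combined with $k_1+k_2=n$ and $p\mid n$, only a handful of $(n,p)$ survive (with a separate analysis for $k_i\le 4$ using $\fA_{n-k_i}$ in place of $\fA_{k_i}$), and each is then eliminated by comparing $|H|$ with the maximal $p'$-subgroup order of $\fA_n$ via Lemma~\ref{lem:order}. The concluding assertion ``$p\nmid n$'' follows from orbit-stabilizer: transitivity of $H$ forces $n\mid|H|$, and $H$ is a $p'$-group.

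The main obstacle is precisely this last edge case: the natural-module argument is sharp there, so one must genuinely combine induction on $n$ with the explicit classification of Table~\ref{tab:alt} rather than produce a single uniform representation-theoretic witness against Lemma~\ref{lem:fp}.
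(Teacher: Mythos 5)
Your argument is correct in outline and takes a genuinely different route from the paper for the bulk of the intransitive case. The paper never looks at the natural permutation module: it passes directly to the maximal intransitive overgroup $M=(\fS_k\times\fS_{n-k})\cap\fA_n$, applies Lemma~\ref{lem:overgroup} and Corollary~\ref{cor:compfac} to force both $\fA_k$ and $\fA_{n-k}$ to satisfy $(I_p)$, invokes minimality, and then disposes of the survivors by block-theoretic arguments (e.g.\ the constituent labelled $(2p-1,1)$ in the permutation character of $\fA_{2p}$ on $M$ lies outside the principal block) together with \GAP\ computations for $p\le 5$, which there require checking all $k,n-k\le 8$. Your fixed-point argument via Lemma~\ref{lem:fp} kills every intransitive configuration in one stroke except $r=2$, $p\mid n$, $p\nmid k_i$ --- in particular it absorbs the paper's $n=2p$, $k=n-k=p$ subcase (there both orbit characteristic vectors lie in $V_0$, so $W^H\ne0$), and it shrinks the residual finite list for $p\le5$ considerably. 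What the paper's approach buys is that it never needs to know anything about the module structure of $k^n$ over $\fA_n$; what yours buys is uniformity and a much smaller terminal case analysis.

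Two points need attention before this is a complete proof. First, you must justify that $V_0$ (resp.\ $V_0/V_1$) is a non-trivial simple $k\fA_n$-module, or at least that it has no trivial composition factor as an $\fA_n$-module; the cheap fix is Clifford's theorem, which makes $D^{(n-1,1)}{\downarrow}_{\fA_n}$ semisimple with non-trivial summands for $n\ge5$, and that suffices for the appeal to Lemma~\ref{lem:fp}. Second, your residual case is only sketched, and ``each is then eliminated'' is not quite the right statement: the table itself contains an intransitive example, namely $\fA_6$ with $H=C_3^2$ at $p=2$ (orbits $3+3$, so exactly your edge case), so the analysis there must \emph{recover} the table entry rather than produce a contradiction --- the proposition only asserts transitivity for \emph{counter-examples}. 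The remaining pairs $(n,p)$ on your list (such as $(8,2)$, $(10,2)$, $(9,3)$, $(10,5)$, $(15,5)$ and $n=p$ for $p\ge7$) do all fall to the order comparison of Lemma~\ref{lem:order}, but those verifications, which replace the paper's \GAP\ checks, still have to be written out.
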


\begin{proof}
Let $n$ be minimal such that $G=\fA_n$ satisfies $(I_p)$ with respect to $H\le G$
not occurring in the conclusion and assume $H$ is intransitive. Let $M<\fA_n$
be a maximal intransitive subgroup containing $H$. Thus
$M=(\fS_k\times\fS_{n-k})\cap\fA_n$ for some $1\le k\le n-1$. If $\fA_n$ has
property~$(I_p)$ then by Lemma~\ref{lem:overgroup} and
Corollary~\ref{cor:compfac} so do $\fA_k$ and $\fA_{n-k}$. If $p>5$ then by
induction we must have $k,n-k\le p\le n$. In fact, by applying
Proposition~\ref{lem:cyc} either $n=p$ or $k=n-k=p$. The first case
appears in the conclusion of
Theorem~\ref{thm:alt}, while in the second case the permutation character of
$\fA_{2p}$ on $M$ contains characters from non-principal $p$-blocks, for example
the restriction to $\fA_{2p}$ of the character labelled by $(2p-1,1)$, so this
does not occur.   \par
If $p=5$ then we need to discuss $k,n-k\le 8$, if $p=3$, then $k,n-k\le5$, and
if $p=2$ then $k,n-k\le 6$. These cases can be settled using \GAP.
\end{proof}

\begin{prop}   \label{prop:alt prim}
 Assume that $\fA_n$, $n\ge5$, is a minimal counter-example to
 Theorem~\ref{thm:alt} with respect to $H$. Then $H$ is a primitive
 subgroup.
\end{prop}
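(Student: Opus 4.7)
The plan is to follow the pattern of Proposition~\ref{prop:alt trans}. Suppose $n$ is minimal such that $\fA_n$ has $(I_p)$ with respect to some $H$ not appearing in Table~\ref{tab:alt}; by Proposition~\ref{prop:alt trans} we may assume $H$ is transitive, and we seek to rule out the possibility that $H$ is imprimitive. In that case $H$ preserves a nontrivial block system with $m=n/k$ blocks of size $k$ (so $1<k,m<n$), and is therefore contained in the maximal imprimitive subgroup $M:=(\fS_k\wr \fS_m)\cap\fA_n$. Lemma~\ref{lem:overgroup} yields that $M$ too has property $(I_p)$ with respect to $H$.

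Next I apply Lemma~\ref{lem:normal} to the base subgroup $N:=M\cap \fS_k^m\unlhd M$, whose quotient $M/N$ is $\fS_m$ or a subgroup thereof, and whose non-abelian composition factors are $m$ copies of $\fA_k$ (if $k\ge5$). Both $N$ and $M/N$ inherit $(I_p)$, and then Corollary~\ref{cor:compfac} forces every non-abelian simple composition factor to satisfy $(I_p)$: in particular $(\fA_k,p)$ has $(I_p)$ when $k\ge5$, and $(\fA_m,p)$ when $m\ge5$. Since Proposition~\ref{prop:alt trans} also gives $p\nmid n$, we have $p\nmid k$ and $p\nmid m$. The minimality of $n$ now forces each of $(\fA_k,p)$ and $(\fA_m,p)$ (whenever applicable) to appear in Table~\ref{tab:alt} with $p$ not dividing the degree; inspection shows this constrains $p\in\{2,3,5\}$ and each of $k$ and $m$ to a short list of values. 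Combined with $n=km$, only finitely many triples $(n,k,p)$ survive.

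Each remaining candidate is to be ruled out directly: Lemma~\ref{lem:order} requires $|H|$ to be maximal among $p'$-subgroups of $\fA_n$; Lemma~\ref{lem:l(G)} supplies $|H|\ge l(\fA_n)$; and any residual cases are to be settled by a brief \GAP\ computation. The principal obstacle will be the small primes $p=2$ and $p=3$, where Table~\ref{tab:alt} offers the most admissible factors and the $p'$-subgroups of $\fA_n$ are largest, so the case list is most substantial and direct computer verification will be hard to avoid.
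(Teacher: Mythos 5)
Your setup matches the paper's: reduce to a maximal imprimitive overgroup $M=(\fS_k\wr\fS_m)\cap\fA_n$ and use Corollary~\ref{cor:compfac} together with minimality to constrain $k$ and $m$. But there is a genuine gap at the step where you claim that inspection of Table~\ref{tab:alt} forces $p\in\{2,3,5\}$ and finitely many triples. Theorem~\ref{thm:alt} (and hence minimality) only constrains $(\fA_k,p)$ when $p\le k$; if $p>k$ then $p\nmid|\fA_k|$ and $\fA_k$ has $(I_p)$ trivially, so Corollary~\ref{cor:compfac} gives no information. The correct conclusion is therefore ``$k,m<p$, or $p\le5$'', which leaves the infinite family $2<k,m<p$ with $n=km\le(p-1)^2$ for arbitrarily large $p$ completely untouched by your argument. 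This is precisely the hard case, and the paper disposes of it with block theory rather than subgroup-order or counting bounds: the Paget--Wildon result exhibits the constituent labelled $(a^b)$ in the permutation character on $M$, which lies outside the principal $p$-block unless $a=p-1$; the residual case $n=b(p-1)$ is then killed by choosing an auxiliary prime $r$ with $n/2<r\le n$, noting $M$ is an $r'$-group so the permutation character must contain the character $(r-1,n-r+1)$ adjacent to the trivial one on the $r$-Brauer tree, showing this fails to lie in the principal $p$-block for some admissible $r$, with Rosser--Schoenfeld guaranteeing enough primes $r$ exist. None of this machinery appears in your proposal, and Lemmas~\ref{lem:order} and~\ref{lem:l(G)} will not substitute for it.

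A secondary problem: even within the genuinely finite list for $p\le5$, your plan to finish by ``a brief \GAP\ computation'' is not realistic. For $p=5$ the surviving degrees run up to $n=64$ (e.g.\ $k=m=8$), far beyond the range where decomposition matrices of $\fA_n$ are available; the paper again resorts to the auxiliary-prime Brauer-tree argument there (with $r=19,\dots,61$), and for $p=2,3$ at $n=20,25$ it uses explicit order comparisons via Lemma~\ref{lem:order} rather than decomposition data. So while your reduction to wreath products is the right first move, the proposal is missing the central block-theoretic idea that makes the proposition provable.
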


\begin{proof}
Let $n$ be minimal such that $G=\fA_n$ satisfies $(I_p)$ with respect to $H\le G$
not occurring in the conclusion of Theorem~\ref{thm:alt}.
By Proposition~\ref{prop:alt trans}, $H$ is transitive and $p{\not|}n$. Assume
$H$ is imprimitive and let $M<\fA_n$ be a maximal imprimitive overgroup. Then
$M=\fS_a\wr\fS_b\cap\fA_n$ with $ab=n$. By Corollary~\ref{cor:compfac} and
minimality we have $a,b<p$, or $p\le5$. Then, $a,b>2$ as otherwise Sylow
$p$-subgroups of $G$ are Abelian.
Now, if $p=2$ then $a,b\in\{3,5\}$. The cases $n=9,15$ can be excluded with
\GAP, and for $n=25$ the maximal $p'$-subgroup $H=C_5\wr C_5$ of
$M=\fS_5\wr\fS_5\cap\fA_{25}$ has smaller order than that of a Sylow
3-subgroup of $\fA_{25}$, so
$M$ cannot contain a relevant subgroup. If $p=3$ then $a,b\in\{2,4,5\}$. Again,
the cases $n\le16$ are settled using the known tables. When $n=20$ a
subgroup $F_{20}\wr D_8\cap\fA_{20}$ has larger order than the largest
$3'$-subgroup of $\fS_4\wr\fS_5$; when $n=25$ again a Sylow 2-subgroup has
larger order than the $p'$-subgroup $D_5\wr D_5$ of $\fA_5\wr\fA_5$.   \par
If $p=5$ then $a,b\in\{2,3,4,6,7,8\}$, so (using \GAP)
$$n\in\{ 21, 24, 28, 32, 36, 42, 48, 49, 56, 64 \}.$$
Let $r=19,23,23,31,31,41,47,47,53,61$ in the respective cases. Then $M$ is
an $r'$-subgroup of $\fS_n$, so the permutation character on $M$ is
$r$-projective and hence contains the character connected to the trivial
character on the $r$-Brauer tree. This has label
$$(18,3),(22,2),(22,6),(30,2),(30,6),(40,2),(46,2),(46,3),(52,4),(60,4)$$
respectively. Since none of these lies in the principal 5-block, no new
cases arise.
\par
We may now assume $2< a,b<p$, so $n=ab\le(p-1)^2$. Then \cite[Thm~2.6 and
Cor.~6.4]{PW11} exhibits a constituent labelled $(a^b)$, not lying in the
principal $p$-block of $\fA_n$ unless $a=p-1$, so $n=b(p-1)$ with $2<b<p$. Let
$r$ be a prime between $n/2+1$ and $n$. Then Sylow $r$-subgroups of $\fS_n$ are
cyclic. Arguing as above, the permutation character on $M$ must contain the
character connected to the trivial character on the $r$-Brauer tree, labelled
$(r-1,n-r+1)$. This lies in the principal $p$-block only when
$n-r+1\equiv0,1\pmod p$, so $b+r\equiv0,1\pmod p$. Thus, less than $b$ possible
odd values for $r$ are excluded. Now by \cite[Cor.~3]{RS62} the number of
primes between $x$ and $2x$ is at least $3x/(5\log x)$ for $x\ge 21$. This
shows there are at least $b$ distinct primes $r$ in our range as soon as
$p\ge13$. The smaller values of $p$ are readily checked.
\end{proof}

\begin{proof}[Proof of Theorem~\ref{thm:alt}]
Let $n$ be minimal such that $\fA_n$ is a counter-example to the theorem,
with respect to the $p'$-subgroup $H$. By using the tables in \GAP\ we may
assume $n>16$. By Propositions~\ref{prop:alt trans} and~\ref{prop:alt prim},
$H$ is a transitive and primitive subgroup. Hence, for $n>24$ we have
$|H|<2^n$ by a result of Mar\'oti \cite[Cor.1.2]{Ma02}. But for $n\ge7$ the
subgroup $\fS_4^a\fS_b\cap\fA_n$, where $n=4a+b$ with $0\le b<4$, has order
larger than $2^n$. This contradicts our assumption on $H$, by
Lemma~\ref{lem:order}, when $p\ge5$ and $n>24$. The primitive $p'$-subgroups of
$\fA_n$, $n\le24$, are well known and easily seen to be too small as well.
\par
We are left with the case $p\in\{2,3\}$. For $p=3$ consider suitable
direct products of wreath products of the subgroup of order $20$ inside
$\fS_5$. This has order larger than $n^{\sqrt{n}}$ for $n\ge35$, and
hence larger than any primitive non-3-transitive subgroup of $\fA_n$, by
\cite[Cor.~1.1(i)]{Ma02}. Note that 3-transitive groups are not $3'$-groups.
For $n\le 34$ the primitive groups are available in \GAP\ and have order
smaller than that of a Sylow 2-subgroup of~$\fA_n$.

For $p=2$ the order of a Sylow 3-subgroup of $\fA_n$ is larger than
$n^{\sqrt{n}}$ for $n\ge60$, so by the above cited result we only need to
worry about $n\le59$. The odd-order (solvable) primitive subgroups of these
alternating groups have smaller order than a Sylow 3-subgroup of $\fA_n$ by
\GAP.
\end{proof}

%%%%%%%%%%%%%%%%%%%%%%%%%%%%%%%%%%%%%%%%%%%%%%%%%%%%%%%%%%%%%%%%%%%%%%%%%
\section{Sporadic groups} \label{sec:spor}

\begin{thm}   \label{thm:spor}
 Let $G$ be a sporadic simple group and $p$ a prime dividing $|G|$. If $G$ has
 property $(I_p)$ with respect to $H\le G$ then $(G,H,p)$ are as in
 Table~$\ref{tab:spor}$. Moreover, in all listed cases except possibly $J_4$,
 the groups do have property $(I_p)$.
\end{thm}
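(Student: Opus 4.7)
The plan has two directions: verify the listed triples, then rule everything else out. For the forward direction, for each $(G,H,p)$ listed in Table~\ref{tab:spor} I would decompose the permutation character $\Ind_H^G(1_H)$ into ordinary irreducibles using \GAP{} together with fusion data from the ATLAS, and compare it with the ordinary character of $\Phi_{1_G}$, which is determined by the known $p$-modular decomposition matrix of $G$ via $\Phi_{1_G}=\sum_\chi d_{\chi,1}\,\chi$. When the Sylow $p$-subgroup of $G$ is cyclic, Lemma~\ref{lem:cyc} collapses this check to $2$-transitivity of $G$ on $G/H$, which is visible from the ATLAS. The only listed case for which this verification cannot be completed with current data is $J_4$ at $p=11$, since the relevant columns of its $11$-decomposition matrix are not known; this is exactly why the statement flags $J_4$.

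For the converse, suppose $G$ is sporadic with property $(I_p)$ via some $p'$-subgroup $H$. Four numerical constraints from Section~\ref{sec:prelim} drive the search: $|H|$ is maximal among all $p'$-subgroups of $G$ (Lemma~\ref{lem:order}); $|G:H|=\dim\Phi_{1_G}$ is known from the decomposition matrix, and in particular $|G:H|\le\chi(1)^2$ for every $\chi\in\Irr(G)$ of $p$-defect zero; $l(G)\le|H|$ (Lemma~\ref{lem:l(G)}); and no non-trivial simple $kG$-module admits $H$-fixed vectors (Lemma~\ref{lem:fp}). By Lemma~\ref{lem:overgroup}, $H$ is contained in some maximal subgroup $M<G$, so a candidate list for $H$ is obtained by running through the ATLAS maximal subgroups of $G$, filtering by the $p'$-condition and the index condition $|G|/|H|=\dim\Phi_{1_G}$.

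With these filters in place the case analysis becomes concrete. For each sporadic $G$ and each prime $p\mid|G|$: if a Sylow $p$-subgroup is cyclic, Lemma~\ref{lem:cyc} restricts attention to the finitely many $2$-transitive sporadic actions, all classified in the ATLAS; if it is non-cyclic, the decomposition matrix supplies $\dim\Phi_{1_G}$ (or a usable lower bound), and the combined inequalities $l(G)\le|H|\le|M|_{p'}$ together with the index condition leave a short list of candidate $H$, each verified or refuted by an explicit computation of $\Ind_H^G(1_H)$ in \GAP{}. For the small primes $p\in\{2,3\}$ the inequality $l(G)\le|H|$ is typically the binding constraint and forces $H$ to sit inside one of the largest maximal subgroups of $G$, drastically shortening the search.

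The main obstacle is the incompleteness of decomposition-matrix data for the largest sporadic groups. For $G$ among $Ly$, $Th$, $B$, $M$ (and similarly problematic cases) I would aim to close the argument using only partial information: lower bounds on $\dim\Phi_{1_G}$ derived from known projective characters, combined with ATLAS upper bounds on $p'$-subgroup orders, should already violate $|G:H|=\dim\Phi_{1_G}$ for every candidate. For $J_4$ at $p=11$ no such elimination appears to be available with current knowledge, which is precisely the reason the theorem leaves those entries explicitly open.
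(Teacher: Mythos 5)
Your forward direction and your treatment of the groups with known decomposition matrices (and of the cyclic-Sylow cases via Lemma~\ref{lem:cyc} and the classification of $2$-transitive actions) match the paper. The genuine gap is in your converse argument for the largest sporadic groups, above all $B$ and $M$ at $p\in\{2,3\}$. For these the $p$-modular decomposition matrices are unknown, so neither $\dim\Phi_{1_G}$ nor a usable lower bound for it is available, and your hope that ``lower bounds on $\dim\Phi_{1_G}$ derived from known projective characters, combined with ATLAS upper bounds on $p'$-subgroup orders, should already violate $|G:H|=\dim\Phi_{1_G}$'' does not materialize: after filtering the maximal subgroups by the requirement that all constituents of the permutation character lie in the principal block (a filter you do not invoke but which is the paper's main tool here), one is still left with $H\le U=2.\tw2E_6(2).2$ in $B$ and $H\le U=2.B$ in $M$, and no purely numerical comparison eliminates these. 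Also, your claim that $l(G)\le|H|$ is the binding constraint for $p\in\{2,3\}$ is not right in practice; $l(G)$ is small compared with the $p'$-parts of the large maximal subgroups.

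What is actually needed, and what the paper does, is the structural reduction you omit: if $H\le U$ then $U$ has property $(I_p)$ by Lemma~\ref{lem:overgroup}, hence so does its non-Abelian composition factor $\tw2E_6(2)$ (respectively $B$, which reduces again to $\tw2E_6(2)$) by Corollary~\ref{cor:compfac}. One must then prove that $\tw2E_6(2)$ fails $(I_p)$ for $p=2,3$, and this requires Lie-theoretic input rather than sporadic-group data: for $p=3$ one computes the Harish-Chandra induction of the $1$-PIM of $\PSU_6(2)$ from the parabolic $2^{1+20}.\PSU_6(2)$ and finds the unipotent constituent $\phi_{16,5}$ outside the principal $3$-block; for $p=2$ one compares the $2'$-parts of the candidate maximal subgroups (parabolic and $3$-local) and shows the surviving $3$-local subgroup has a permutation character meeting non-principal $2$-blocks. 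Without this reduction and the $\tw2E_6(2)$ computation, the cases $B$ and $M$ at $p=2,3$ remain open in your argument, so the converse is not established as written.
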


\begin{table}[htb]
\caption{Induced 1-PIMs for sporadic groups}   \label{tab:spor}
$\begin{array}{ccrrc}
 G& H& p& \dim\Phi_{1_G}\kern-10pt\\
\hline
     M_{11}&    M_{10}& 11& 11\\
     M_{22}& \PSL_3(4)& 11& 22\\
     M_{23}&    M_{22}& 23& 23\\
         HS& \PSU_3(5)& 11& 176\\
       Co_3&     McL.2& 23& 276\\
        J_2& \PSU_3(3)&  5& 100\\
       He& \PSp_4(4).2&  7& 2058\\
 J_4& 2^{10}.\PSL_5(2)& 11& 8474719242\\
\end{array}$
\end{table}

\begin{proof}
The decomposition matrices of sporadic groups are completely known up to the
Harada--Norton group $HN$ and contained in \cite{GAP}. From this the claim can
be checked easily for those groups by first discarding those cases when
$\dim\Phi_{1_G}$ does not divide $|G|$, and in the few remaining cases, checking
whether a subgroup of the required index exists. Five examples occur for cyclic
Sylow $p$-subgroups, as described by Lemma~\ref{lem:2-trans}, the other two have
non-cyclic, Abelian Sylow $p$-subgroups.   \par
For the larger groups the real stem of the Brauer tree of the principal
$p$-block is known \cite{HL89}. The trivial character is never
connected to the exceptional node, and since these groups do not possess
2-transitive permutation representations by \cite[Thm~5.3]{C81}, there are
no further examples with cyclic Sylow $p$-subgroups by Lemma~\ref{lem:cyc}.

The permutation characters of the large maximal subgroups $U$ of the remaining
sporadic groups $G$ are available in \GAP. Removing those which involve
characters from non-principal blocks, we are left with a small list of
possible cases. For example, for $G=Ly$ we could have $H$ contained in one of
the
two largest maximal subgroups, $G_2(5)$ or $3.McL.2$. But neither of these has
property $(I_p)$, so nor does $G$ by Lemma~\ref{lem:overgroup}. The cases that
can not directly be ruled out like this are: 
$J_4$ at $p=11$ with $U$ one of $2^{13}.3.M_{22}.2$ or $2^{10}.\PSL_5(2)$,
$G=B$ at $p=2,3$ with $U=2.\tw2E_6(2).2$, and $G=M$ at $p=2,3$ with $U=2.B$.

Arguing for $G:=\tw2E_6(2)$ in the same way as we did for the sporadic groups
we see that if it has property $(I_p)$, then either $p=2$, or $p=3$ with
$H\le U:=2^{1+20}.\PSU_6(2)$. Now for $p=3$ the 1-PIM $\bar\Phi_1$ of
$\bar U:=\PSU_6(2)$ is known \cite{GAP}, and
$\Ind_U^G(\Phi_{1_U})=\Ind_U^G(\Infl_{\bar U}^U(\bar\Phi_1))$ is just
Harish-Chandra induction of $\bar\Phi_1$, hence can be computed explicitly.
It transpires that this contains the unipotent constituent labelled
$\phi_{16,5}$ which does not lie in the principal 3-block \cite{En00}. So this
does not yield an example for $\tw2E_6(2)$ at $p=3$.  For $p=2$, using
Lemma~\ref{lem:normal} and the information in \GAP\ the only maximal subgroups
of $G$ possibly containing a relevant $2'$-subgroup
$H$ are the parabolic subgroup with Levi factor $\fA_5\times\PSL_3(2)$, and
two 3-local subgroups. The 3-local subgroup $U$ normalising a 3C-element
contains a subgroup of order~$3^9$, larger than the $2'$-part of either of the
other two candidate subgroups. Hence we must have $H\le U$, up to conjugation.
But the permutation character of $G$ on $U$
contains characters not in the principal 2-block. So $p=2$ is not possible for
$\tw2E_6(2)$ either, and hence we also do not obtain examples for $B$ or $M$
at~$p=2$ or $p=3$ by Lemma~\ref{lem:overgroup}.   \par
For $J_4$ the $11'$-part of the order of the maximal subgroup
$2^{13}.3.M_{22}.2$ is smaller than the order of the maximal subgroup
$2^{10}.\PSL_5(2)$, which is not divisible by~11, so if $J_4$ is an example
then with respect to the latter subgroup.
\end{proof}

For $G=J_4$ with $p=11$ the maximal subgroups $2^{10}.\PSL_5(2)$ are at the
same time maximal $p'$-subgroup. We have not been able to settle this case.

%%%%%%%%%%%%%%%%%%%%%%%%%%%%%%%%%%%%%%%%%%%%%%%%%%%%%%%%%%%%%%%%%%%%%%%%%
\section{Groups of Lie type in non-defining characteristic} \label{sec:nondef}

We now consider the simple groups of Lie type $G$ in characteristic~$r$ for
primes $p\ne r$ dividing $|G|$. Our general strategy is as follows. By
Lemma~\ref{lem:order} any admissible $p'$-subgroup $H$ of $G$ has order at
least that of a Sylow $r$-subgroup of $G$. Most maximal subgroups of $G$
with at least that order have been determined by Liebeck and Liebeck--Saxl.
According to \cite[Thm]{LS87} and \cite[Thm]{Lie85}, these are either
maximal parabolic subgroups, or some narrow class of subsystem subgroups.
Let's first discuss the former. There are two main arguments:
\begin{enumerate}
\item The permutation characters on parabolic subgroups $P<G$ are known by
 Howlett--Lehrer theory to decompose as the corresponding characters in the
 associated Weyl groups. All of their constituents are unipotent. From the known
 block distribution of unipotent characters \cite{BMM} we can determine
 whether some constituents of $\Ind_P^G(1_P)$ do not lie in the principal
 $p$-block of $G$. In that case, $P$ cannot contain a candidate subgroup $H$.
\item Let $P=UL$ be the Levi decomposition of a parabolic subgroup $P$ of $G$,
 with maximal normal unipotent
 subgroup~$U$. If $H$ is contained in $P$, then $P$ has property $(I_p)$ by
 Lemma~\ref{lem:overgroup}, and so has $[L,L]/Z([L,L])$ by
 Corollary~\ref{cor:compfac}. At least if $q>3$ this is a product of simple
 groups of Lie type of smaller rank, for which we know about validity of
 $(I_p)$ by induction.
\end{enumerate}
As for the exceptions in the Liebeck and Liebeck--Saxl theorems, these are
either for specific, small values of $q$, in which case decomposition matrices
in the \GAP-library \cite{GAP} can be used. For the remaining maximal subgroups
$M$, we try to
either exhibit a $p'$-order parabolic subgroup of strictly larger order, or at
least of larger order than that onf any $p'$-subgroup of $M$.

Let us note the following general result for certain primes:

\begin{prop}   \label{prop:not parabolic}
 Let $G=G(q)$ be simple of Lie type, not a Suzuki or Ree group. Let $p|(q-1)$
 but prime to the order of the Weyl group of $G$. If $G$ has property $(I_p)$
 with respect to $H\le G$, then $H$ is $G$-irreducible, that is, it is
 not contained in any proper parabolic subgroup of $G$.
\end{prop}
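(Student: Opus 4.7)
The plan is to argue by contradiction using Lemma~\ref{lem:fp}. Suppose $H$ is contained in some proper parabolic subgroup $P=LU$ of $G$, with Levi $L$ and corresponding parabolic $W_L\le W$ in the Weyl group. Since $H\le P$, any simple $kG$-module with a non-zero $P$-fixed vector automatically has a non-zero $H$-fixed vector; Lemma~\ref{lem:fp} then forces every such module to be trivial. The goal is therefore to exhibit a non-trivial simple $kG$-module carrying a $P$-fixed vector, which will be the desired contradiction.

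By Frobenius reciprocity, the simple $kG$-modules with a non-zero $P$-fixed vector are exactly the simple quotients of the permutation module $\Ind_P^G(k)$, so it suffices to show that $\Ind_P^G(k)$ admits at least two non-isomorphic simple quotients (one of them being the trivial module from the augmentation). To count the simple quotients I would compute $\operatorname{End}_{kG}(\Ind_P^G(k))$ by combining Mackey's formula with the Bruhat decomposition; this identifies the endomorphism algebra with the parabolic Iwahori--Hecke algebra of type $(W,W_L)$, whose $k$-basis is indexed by the $(W_L,W_L)$-double cosets in $W$. Under the hypothesis $p\mid(q-1)$ the parameter $q$ equals $1$ in $k$, and the quadratic relations $(T_s-q)(T_s+1)=0$ collapse to $T_s^2=1$: the Iwahori--Hecke algebra specialises to the group algebra $kW$, so $\operatorname{End}_{kG}(\Ind_P^G(k))\cong e_L\, kW\, e_L$ with $e_L=|W_L|^{-1}\sum_{w\in W_L}w$, well-defined because $p\nmid|W|$.

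Since $p\nmid|W|$, Maschke's theorem makes $kW$ semisimple; hence $e_L kW e_L$ is semisimple, and its simple modules correspond bijectively to the irreducible constituents of $\Ind_{W_L}^W(1_{W_L})$. As $P$ is proper we have $W_L\ne W$, so $\Ind_{W_L}^W(1_{W_L})$ has degree $[W:W_L]>1$ while containing the trivial $W$-module with multiplicity exactly $1$ (again by Frobenius reciprocity); it therefore has at least two distinct irreducible constituents. These yield two non-isomorphic simple quotients of $\Ind_P^G(k)$, at least one of which must be non-trivial, contradicting Lemma~\ref{lem:fp}.

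The main difficulty is the identification of $\operatorname{End}_{kG}(\Ind_P^G(k))$ with the specialised parabolic Hecke algebra $e_L kW e_L$ in non-defining characteristic: this relies on the standard single-parameter presentation of the Iwahori--Hecke algebra, which is the one arising for untwisted and ordinary twisted finite groups of Lie type but not for Suzuki and Ree groups, accounting for their exclusion in the hypothesis.
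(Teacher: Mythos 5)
Your argument is correct in substance but takes a genuinely different route from the paper. The paper's proof is character-theoretic: by a result of Cabanes cited there, under the hypotheses on $p$ the only unipotent constituent of the ordinary character of $\Phi_{1_G}$ is the trivial character, whereas for $H\le P$ the character of $\Ind_H^G(k)$ contains $\Ind_P^G(1_P)$ as a subcharacter, and all constituents of the latter are unipotent and not all trivial; this contradiction finishes the proof in a few lines at the cost of invoking that block-theoretic input. Your proof replaces it by the Hecke algebra computation: the identification $\operatorname{End}_{kG}(\Ind_P^G(k))\cong e_L\,kW\,e_L$ after specialising at $q\equiv1\pmod p$, semisimplicity from $p\nmid|W|$, and then Lemma~\ref{lem:fp}. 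This is more self-contained (it needs only Iwahori's theorem and its parabolic variant, whose standard equal-parameter form indeed fails for the Suzuki and Ree groups, so your explanation of that exclusion is the right one) and it makes visible exactly where each hypothesis enters; the paper's version is shorter and fits the machinery already deployed throughout Section~\ref{sec:nondef}.

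One step should be tightened. For $M=\Ind_P^G(k)$ and $E=\operatorname{End}_{kG}(M)$, the simple $E$-modules correspond to isomorphism classes of indecomposable direct summands of $M$, not directly to simple quotients of $M$: an indecomposable module can have several non-isomorphic simple modules in its head while its endomorphism ring is local, so "$E$ has two simple modules" does not by itself yield "two non-isomorphic simple quotients". The repair is immediate. Since $k$ is algebraically closed and $E$ is semisimple of dimension $\#(W_L\backslash W/W_L)\ge2$, $E$ is not local, so $M$ is decomposable and its head contains at least two simple summands; as $\dim\Hom_{kG}(M,k)=\dim k^P=1$, at most one of these is trivial. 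Hence $M$ has a non-trivial simple quotient $S$, which satisfies $S^P\ne0$ by Frobenius reciprocity, and Lemma~\ref{lem:fp} gives the contradiction.
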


\begin{proof}
Under our assumptions on $p$, by \cite[Prop.~8.11]{Ca18} the decomposition
matrix of the principal $p$-block of $G$ is lower triangular with the only
unipotent character involved in $\Phi_{1_G}$ being the principal character.
On the other hand,
all constituents of the permutation character of $G$ on a parabolic subgroup
$P$ are unipotent, so if $P$ is proper, $\Ind_P^G(1_P)$ contains non-trivial
unipotent characters. Hence $G$ cannot have property $(I_p)$ with respect to
any subgroup of a proper parabolic subgroup.
\end{proof}

\begin{exmp}
 The conclusion of Proposition~\ref{prop:not parabolic} can fail when $p$
 divides the order of the Weyl group. For example, $G=\PSL_3(4)$ has property
 $(I_3)$ with respect to a subgroup $2^4.D_5$, and the latter is contained in a
 proper parabolic subgroup by Borel--Tits \cite[Thm~26.5]{MT}.
\end{exmp}

%%%%%%%%%%%%%%%%%%%%%%%%%%%%%%%%%%%%
\subsection{The linear groups}

Our induction base is the following result, which also covers the defining
characteristic (leading to items~(4)--(6)):

\begin{prop}   \label{prop:L2}
 Let $G=\PSL_2(q)$, $q\ge7$. Then $G$ satisfies $(I_p)$ for $p$ dividing $|G|$
 if and only if one of
 \begin{enumerate}
  \item[\rm(1)] $2<p|(q+1)$, $H=B$, $\dim\Phi_{1_G}=q+1$;
  \item[\rm(2)] $q=2^f$, $p=2^f-1$ a Mersenne prime, $H=D_{2(q+1)}$, 
   $\dim\Phi_{1_G}=q(q-1)/2$;
  \item[\rm(3)] $p=2$, $q$ is odd, $H=O^2(B)$, $\dim\Phi_{1_G}=(q-1)_2(q+1)/2$;
  \item[\rm(4)] $G=\PSL_2(7)$, $H=\fS_4$, $p=7=\dim\Phi_{1_G}$;
  \item[\rm(5)] $G=\PSL_2(11)$, $H=\fA_5$, $p=11=\dim\Phi_{1_G}$; or
  \item[\rm(6)] $p=2$, $q$ is even, $H=C_{q+1}$, $\dim\Phi_{1_G}=q(q-1)$,
 \end{enumerate}
 where $B<G$ is a Borel subgroup.
\end{prop}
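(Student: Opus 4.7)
The plan is to combine Dickson's classical description of the subgroups of $\PSL_2(q)$ with a case-by-case analysis of the principal $p$-block for each prime $p$ dividing $|G|$. Dickson's theorem says that every $p'$-subgroup of $G$ is contained in one of: the Borel $B$, a cyclic torus or its dihedral normaliser, a subfield subgroup (which involves $p$ in defining characteristic and is thus excluded there), or one of $\fA_4,\fS_4,\fA_5$. For each prime $p$, I would first bound the order of an admissible $H$ via Lemma~\ref{lem:order} and then check whether any $p'$-subgroup has index equal to $\dim\Phi_{1_G}$.

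Suppose first that $p$ is odd and different from the defining characteristic $r$. A Sylow $p$-subgroup is then cyclic, and the principal block of $\PSL_2(q)$ has Brauer tree a line on three nodes with $1_G$ and $\St$ among them. When $p\mid q+1$ the tree takes the form $1_G-\St-E$ (with $E$ the exceptional node of multiplicity $(p-1)/2$), so $\dim\Phi_{1_G}=1+q$ and Lemma~\ref{lem:cyc} forces a $2$-transitive action; the only such action of $\PSL_2(q)$ with $p'$ point stabiliser is the natural one on cosets of $B$, giving~(1). When $p\mid q-1$ the tree takes the form $1_G-E-\St$, so Lemma~\ref{lem:cyc} no longer applies and $\dim\Phi_{1_G}=1+m(q+1)$ with $m=(p-1)/2$; comparing with the subgroup indices from Dickson's list, the only coincidence is the Mersenne situation $q=2^f$, $p=2^f-1$, with $H=D_{2(q+1)}$, giving~(2).

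For $p=2$ and $q$ odd, Sylow $2$-subgroups are dihedral and cyclic-defect theory is unavailable. Here I would invoke the known explicit structure of the principal $2$-block of $\PSL_2(q)$ to find $\dim\Phi_{1_G}=(q-1)_2(q+1)/2$, and verify that $H=O^2(B)$, of $2'$-order $q\cdot((q-1)/2)_{2'}$, realises this index, yielding~(3). For $p=r$ the defining characteristic, write $q=p^f$. When $f=1$ the classical computation of $P(k)$ for $\SL_2(p)$ gives Loewy series $L(0)/L(p-3)/L(0)$ and hence $\dim\Phi_{1_G}=p$, so $(I_p)$ forces a subgroup of index $p$; Dickson shows these exist, for $p\ge 7$, only in cases $(p,H)=(7,\fS_4)$ and $(11,\fA_5)$, giving~(4) and~(5). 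When $q=2^f$ with $f\ge 2$, the analogous computation for $\SL_2(2^f)$ yields $\dim\Phi_{1_G}=q(q-1)$, so $|H|=q+1$, forcing $H=C_{q+1}$ (the non-split torus), giving~(6). When $q=p^f$ with $f\ge 2$ and $p$ odd, I would use $q\mid\dim\Phi_{1_G}$ to bound $|H|$ above (namely $|H|$ must divide $(q^2-1)/d$) and Lemma~\ref{lem:l(G)} to bound it below, and then exclude each remaining candidate in Dickson's list by direct calculation of $\dim\Phi_{1_G}$ from the Cartan matrix of the principal block.

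The main obstacles will be twofold. First, locating the exceptional node on the principal-block Brauer tree when $p\mid q\pm 1$: it is precisely the shift of $E$ from next to $\St$ (when $p\mid q+1$) to next to $1_G$ (when $p\mid q-1$) that produces the Mersenne example~(2) while keeping all other primes $p\mid q-1$ free of $(I_p)$. Second, the defining-characteristic analysis for $q=p^f$ with $f\ge 2$ and $p$ odd, where $\dim\Phi_{1_G}$ is less transparent than in the $q=p$ case and one must rule out the short list of candidate $p'$-subgroups from Dickson's theorem.
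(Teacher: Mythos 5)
Your overall route coincides with the paper's: Dickson's list of subgroups of $\PSL_2(q)$ combined with the Brauer trees and decomposition matrices of Burkhardt, split according to whether $p$ divides $q+1$, $q-1$ or $q$. The branches $p\mid(q+1)$, $p=2$ with $q$ odd, and $q=2^f$ with $p=2$ are handled as in the paper. However, the $p\mid(q-1)$ branch has two genuine gaps. First, the multiplicity of the exceptional node is $\bigl((q-1)_p-1\bigr)/2$, not $(p-1)/2$; these agree only when $p^2\nmid(q-1)$. With the correct value one finds $\dim\Phi_{1_G}=(q-1)(q+1-u)/(2u)$ with $u=(q-1)_{p'}$, and the comparison with subgroup indices (equivalently, the paper's requirement that $\dim\Phi_{1_G}$ divide $|G|$) forces $u=1$, hence $q$ even and $q-1$ a power of $p$. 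As written, your formula is wrong whenever $p^2\mid(q-1)$, so the claim that ``the only coincidence is the Mersenne situation'' is unverified precisely in those cases. Second, $u=1$ only yields $q-1=p^k$; to reach the statement of case~(2), namely $p=2^f-1$ a Mersenne \emph{prime}, you must exclude $k\ge2$. The paper does this via Catalan's conjecture (Mih\u{a}ilescu); an elementary alternative is to factor $2^f=p^k+1$ for $k$ odd and reduce modulo $8$ for $k$ even. Without these two repairs the branch is not closed.

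In defining characteristic with $q=p^f$, $f\ge2$ and $p$ odd, the tools you propose do not suffice: $q\mid\dim\Phi_{1_G}$ only gives $|H|\le(q^2-1)/2$, while Lemma~\ref{lem:l(G)} gives roughly $|H|\ge(q+1)/2$, which does not eliminate the torus normaliser of order $q+1$. What is actually needed is the exact value $\dim\Phi_{1_G}=(2^f-1)q$ from Burkhardt's Hauptsatz~9.4, after which the required order $|H|=(q^2-1)/(2(2^f-1))$ exceeds $q+1$ for all odd $q=p^f\ge7$, leaving only $\fA_4$, $\fS_4$, $\fA_5$ with $q<60$ and hence only cases~(4) and~(5). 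Your ``direct calculation from the Cartan matrix'' amounts to this citation, so the intended route is the paper's, but as proposed the case is not complete. One small point in your favour: in the $p\mid(q+1)$ branch you argue via Lemma~\ref{lem:cyc} that the action must be $2$-transitive and then identify $B$, which is a clean way to get uniqueness of $H$ that the paper leaves implicit.
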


\begin{proof}
First assume that $p$ is odd. If $p|(q+1)$ the 2-transitive permutation
representation on a Borel subgroup gives~(1) by Lemma~\ref{lem:2-trans}. If
$p|(q-1)$ the Brauer tree in \cite{Bu76} shows $\dim\Phi_{1_G}=(q-1)u(q+1-u)/2$,
where $u=(q-1)_{p'}$. This divides $|G|$ only when $u=1$, so $q$ is even and
$q-1$ is a $p$-power. Thus, by Catalan's conjecture $q$ is a Mersenne prime as
in~(2). For $p|q$ we
have $\dim\Phi_{1_G}=(2^f-1)q$, where $q=p^f$, see \cite[Hauptsatz~9.4]{Bu76}.
The $p'$-subgroups $H$ of $G$ of largest order, the normalisers of non-split
maximal tori,
have order~$q+1$, or $q<60$ and $H$ is one of $\fA_4$, $\fS_4$ or $\fA_5$.
In the first case, $|G:H|$ is too large, while the last three cases can be
checked to only lead to~(4) and~(5).   \par
Now assume $p=2$. If $q\equiv3\pmod4$ then a Borel subgroup $B$ of order
$q(q-1)/2$ has odd order, and the permutation character for the 2-transitive
action on its cosets is $\Phi_{1_G}$. If $q\equiv1\pmod4$, the decomposition
matrix in \cite[VIII(a)]{Bu76} shows that
$\dim\Phi_{1_G}=(q+1)(q-1)_2/2=|G:O^2(B)|$ and we obtain~(3). Part~(6)
follows from the decomposition numbers in \cite[Haupts\"atze~7.5 and~7.9]{Bu76}.
\end{proof}

We next extract the necessary information on large subgroups from \cite{Lie85}.

\begin{prop}   \label{prop:large SLn}
 Let $M$ be a maximal subgroup of $\PSL_n(q)$, $n\ge3$, of order at least
 $q^{n(n-1)/2}$. Then $M$ is the image in $\PSL_n(q)$ of the intersection with
 $\SL_n(q)$ of one of the following subgroups of $\GL_n(q)$:
 \begin{enumerate}
  \item[\rm(1)] a maximal parabolic subgroup;
  \item[\rm(2)] $\GL_{n/2}(q)\wr\fS_2$, $\GL_{n/2}(q^2).2$, or $\Sp_n(q)$ when
   $n$ is even;
  \item[\rm(3)] $\GL_n(\sqrt q).2$ or $\GU_n(\sqrt q).2$ if $q$ is a square;
  \item[\rm(4)] $C_7.3$ in $\PSL_3(2)$; $C_{13}.3$ in $\PSL_3(3)$;
   $\fA_6$ or $3^2.Q_8$ in $\PSL_3(4)$; or $\fA_7$ in $\PSL_4(2)$.
 \end{enumerate}
\end{prop}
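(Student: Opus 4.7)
The plan is to invoke Liebeck's theorem from \cite{Lie85}, which classifies all maximal subgroups $M$ of a finite classical group with natural module of dimension $n$ satisfying $|M| > q^{3n}$. Since $n(n-1)/2 > 3n$ whenever $n \ge 7$, any $M$ meeting our hypothesis $|M| \ge q^{n(n-1)/2}$ in dimension $n \ge 7$ already appears in Liebeck's list, so we only need to winnow that list against the stronger bound. For the small-rank range $3 \le n \le 6$ I would fall back on the explicit low-rank exceptional cases recorded in \cite{Lie85} together with the maximal subgroup tables available through \cite{GAP}, which produce the exceptional entries in~(4).

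The main step is sifting class by class through the Aschbacher geometric classes of subgroups of $\GL_n(q)$. The reducible class $\mathcal{C}_1$ consists of the maximal parabolic subgroups and always meets the bound, producing~(1). In the imprimitive class $\mathcal{C}_2$, a subgroup of type $\GL_m(q) \wr \fS_t$ with $n = mt$ has order roughly $q^{tm^2}$, and the inequality $tm^2 \ge mt(mt-1)/2$ forces $t \le 2$, leaving only $\GL_{n/2}(q) \wr \fS_2$ in~(2). Analogous estimates for field-extension subgroups $\GL_{n/k}(q^k).k$ (order roughly $q^{n^2/k}$) and for subfield subgroups $\GL_n(q_0).d$ with $q = q_0^k$ (again order roughly $q^{n^2/k}$) both force $k = 2$, yielding $\GL_{n/2}(q^2).2$ in~(2) and $\GL_n(\sqrt q).2$ in~(3). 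The classical class $\mathcal{C}_8$ supplies $\Sp_n(q)$ when $n$ is even and $\GU_n(\sqrt q).2$ when $q$ is a square, both comfortably above the bound, while the orthogonal subgroups of $\SL_n(q)$ do not meet the threshold. The remaining geometric classes $\mathcal{C}_4$, $\mathcal{C}_6$, $\mathcal{C}_7$, together with the almost simple class $\mathcal{S}$, all yield subgroups whose polynomial degree in $q$ is strictly less than $n(n-1)/2$ once $n$ is large, and are eliminated by the asymptotic bound built into Liebeck's theorem.

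The main obstacle will be the small-dimension analysis: for $n \in \{3,4,5,6\}$ the threshold $n(n-1)/2 \in \{3,6,10,15\}$ is small enough that several small almost simple and exceptional subgroups can meet it, and the asymptotic bound is no longer decisive. Concretely, I would go through the maximal subgroup classification of $\PSL_n(q)$ for $n \le 6$ to account for the low-rank exceptions $C_7.3 \le \PSL_3(2)$, $C_{13}.3 \le \PSL_3(3)$, $\fA_6$ and $3^2.Q_8$ in $\PSL_3(4)$, and $\fA_7 \le \PSL_4(2)$ recorded in~(4); this is a finite, if tedious, enumeration which can be carried out directly from the ATLAS or \cite{GAP}.
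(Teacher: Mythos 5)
Your proposal matches the paper's proof essentially exactly: both invoke Liebeck's classification of maximal subgroups of order at least $q^{3n}$, observe that $q^{3n}\le q^{n(n-1)/2}$ for $n\ge7$ so that only the generic Aschbacher classes survive the stronger bound (giving (1)--(3) via the order formulas in \cite{KL}), and dispose of $3\le n\le6$ by consulting the explicit low-dimensional maximal subgroup tables (the paper uses \cite{BHR}) to obtain the exceptions in (4). The argument is correct and no genuinely different route is taken.
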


\begin{proof}
The main result of \cite{Lie85} characterises the maximal subgroups of
$\PSL_n(q)$ of order at least~$q^{3n}$. Using \cite[Tab.~3.5.A]{KL} and the
known order formulas one arrives at the cases in (1)--(3). For $n\ge7$ we have
$q^{3n}\le q^{n(n-1)/2}$, so no further examples arise. For $n\le6$ the
additional groups can be read off from \cite[Tab.~8.3--8.25]{BHR}.
\end{proof}

In what follows, we write $e_p(q)$ for the multiplicative order of $q$
in the finite field~$\FF_p$.

\begin{thm}   \label{thm:psl}
 Let $G=\PSL_n(q)$ with $n\ge3$ and $p$ a prime dividing $|G|_{q'}$. Then $G$
 satisfies $(I_p)$ with respect to some $p'$-subgroup $H$ if and only if one of
 \begin{enumerate}
  \item[\rm(1)] $e_p(q)=n$, $H=q^{n-1}.\GL_{n-1}(q)/C_d$ and
   $\dim\Phi_{1_G}= (q^n-1)/(q-1)$, where $d=\gcd(n,q-1)$; or
  \item[\rm(2)] $G=\PSL_3(4)$, $p=3$, $H=2^4.D_5$ and $\dim\Phi_{1_G}= 126$.
 \end{enumerate}
\end{thm}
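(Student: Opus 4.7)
The proof of Theorem~\ref{thm:psl} naturally splits into a sufficiency and a necessity part. For sufficiency of~(1): when $e_p(q)=n$, the prime $p$ divides $\Ph n(q)$ but no $q^i-1$ with $1\le i<n$, so the stabiliser $P_1$ of a projective point has order coprime to $p$; since $\PSL_n(q)$ acts 2-transitively on the $(q^n-1)/(q-1)$ points of projective $(n-1)$-space, Lemma~\ref{lem:2-trans} applies and gives $\dim\Phi_{1_G}=(q^n-1)/(q-1)$. Case~(2) for $\PSL_3(4)$ at $p=3$ is a direct verification from the 3-modular decomposition matrix available in \GAP.

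For necessity, assume $G=\PSL_n(q)$ has property $(I_p)$ with respect to $H$. By Lemma~\ref{lem:order}, $|H|\ge|G|_s=q^{n(n-1)/2}$ where $s$ is the defining characteristic of $G$; choosing a maximal overgroup $M\ge H$ yields $|M|\ge q^{n(n-1)/2}$, so Proposition~\ref{prop:large SLn} classifies $M$ up to conjugacy, and the bulk of the argument is a case division along that classification.

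Assume first that $M=P_k$ is a maximal parabolic stabilising a $k$-subspace with $1\le k\le n/2$. By Lemma~\ref{lem:overgroup} and Corollary~\ref{cor:compfac}, both simple composition factors $\PSL_k(q)$ and $\PSL_{n-k}(q)$ of the Levi quotient satisfy $(I_p)$, so induction on $n$ (with Proposition~\ref{prop:L2} as base) drastically restricts the admissible primes; moreover, Proposition~\ref{prop:not parabolic} already forbids primes $p$ dividing $q-1$ coprime to $n!$ from occurring here. For $k=1$, Howlett--Lehrer theory gives $\Ind_{P_1}^G(1_{P_1})=1_G+\chi_{(n-1,1)}$, and $P_1$ is itself a $p'$-subgroup precisely when $e_p(q)=n$, producing case~(1) with $\dim\Phi_{1_G}=(q^n-1)/(q-1)$. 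For $k\ge2$ the permutation character $\Ind_{P_k}^G(1_{P_k})$ contains the pairwise distinct unipotent constituents $\chi_{(n-j,j)}$ for $0\le j\le k$, all of which must lie in the principal $p$-block; by the $e$-core criterion of \cite{BMM} this is a severe restriction which, together with the induction hypothesis and the lower bound on $|H|$, leaves only the configuration $(G,p,H)=(\PSL_3(4),3,2^4.D_5)$ of case~(2), with $H$ sitting inside a maximal parabolic.

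If $M$ belongs to the non-parabolic infinite families of Proposition~\ref{prop:large SLn}(2)--(3), the plan is to exhibit a $p'$-subgroup of a suitable parabolic of $G$ whose order strictly exceeds $|M|_{p'}$, contradicting the maximality asserted in Lemma~\ref{lem:order}. A Zsigmondy-type comparison of the $\Ph i(q)$-factors appearing in the orders of $\GL_{n/2}(q)\wr\fS_2$, $\GL_{n/2}(q^2).2$, $\Sp_n(q)$, $\GL_n(\sqrt q).2$ and $\GU_n(\sqrt q).2$ against $|P_1|_{p'}$ (or against $|P_{n/2}|_{p'}$ when $e_p(q)\ne n$) suffices. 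The four sporadic entries in Proposition~\ref{prop:large SLn}(4) for $\PSL_3(2)$, $\PSL_3(3)$, $\PSL_3(4)$ and $\PSL_4(2)$ are settled by direct \GAP\ calculation and yield only case~(2). The hardest part of the argument I expect to be the parabolic case with $2\le k\le n/2$: teasing out all admissible prime/partition pairs from the $e$-core condition, and verifying that the induced permutation character really does coincide with $\Phi_{1_G}$ in no case besides $(\PSL_3(4),3)$, requires a careful combinatorial analysis combined with the dimension equality $\dim\Phi_{1_G}=|G:H|$.
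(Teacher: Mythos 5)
Your overall architecture matches the paper's: 2-transitivity of the point action for sufficiency of (1), a \GAP\ check for (2), then Lemma~\ref{lem:order} to force $|H|\ge q^{n(n-1)/2}$, Proposition~\ref{prop:large SLn} to list the possible maximal overgroups, induction on the Levi composition factors via Lemma~\ref{lem:overgroup} and Corollary~\ref{cor:compfac}, Howlett--Lehrer plus block theory to kill parabolic overgroups, and an order comparison for the non-parabolic families. (The paper additionally disposes of the cyclic-Sylow range $n/2<e_p(q)<n$ up front via Lemma~\ref{lem:cyc} and Cameron's theorem before invoking the subgroup classification; you fold this into the general case analysis, which can be made to work but is not the route taken.)

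There is, however, a genuine gap in your parabolic case: you claim that requiring all constituents $\chi_{(n-j,j)}$, $0\le j\le k$, to lie in the principal $p$-block, i.e.\ the $e$-core criterion, "leaves only the configuration $(\PSL_3(4),3,2^4.D_5)$". This is false precisely when $e_p(q)=1$, that is when $p\mid q-1$: then every partition has empty $1$-core, so \emph{all} unipotent characters of $\GL_n(q)$ lie in the principal $p$-block and the criterion is vacuous. Proposition~\ref{prop:not parabolic}, which you invoke, only covers $p$ prime to $|W|=n!$, so the cases $p=2$ (always) and $p=3$ (for $n\ge3$), as well as small Mersenne-type configurations, survive both the block criterion and the induction on Levi factors (e.g.\ $\PSL_2(q)$ does satisfy $(I_2)$ by Proposition~\ref{prop:L2}(3), so the Levi of $P_1$ in $\PSL_3(q)$ gives no contradiction at $p=2$). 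The paper closes exactly these cases by a strictly finer tool: the explicitly known $1$-PIMs of $\GL_n(q)$, $n\le10$, from James's decomposition matrices, showing that $\chi_{(2,1)}$ (for $\PSL_3(q)$, $p=2$), $\chi_{(2,2)}$ (for $\PSL_4(q)$, $p=2$) and $\chi_{(4,1)}$, $\chi_{(4,2)}$ (for $\PSL_5(4)$, $\PSL_6(4)$, $p=3$) occur in the relevant permutation characters but \emph{not} in $\Phi_{1_G}$, even though they lie in the principal block. Without some substitute for this step your argument does not exclude, for instance, $\PSL_4(q)$ at $p=2$. Your treatment of the non-parabolic families and the sporadic entries of Proposition~\ref{prop:large SLn}(4) is essentially the paper's and is fine.
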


\begin{proof}
We argue by induction over $n$. Let $e:=e_p(q)$. If $e>n$ then $|G|$ is prime
to~$p$. If $e=n$ then the end-node parabolic subgroups of $G$ have order
coprime to $p$ and the action on the cosets is 2-transitive. This gives
conclusion~(1) by Lemma~\ref{lem:2-trans}. For $2e>n$ Sylow $p$-subgroups of
$G$ are cyclic. Since the trivial character is not connected to the
exceptional character on the Brauer tree, we don't get examples for $e<n<2e$
by Lemma~\ref{lem:cyc}, using that $G$ has no such 2-transitive
permutation representations by \cite{C81}.
\par
Now assume that $e\le n/2$. Note that a Sylow $r$-subgroup, for $r|q$ the
defining characteristic of $G$, has order $q^{n(n-1)/2}$. Thus, by
Lemma~\ref{lem:order} a candidate
$p'$-subgroup $H\le G$ with respect to which $G$ satisfies $(I_p)$ must lie in
one of the subgroups listed in Proposition~\ref{prop:large SLn}. First assume
$H\le P$, with $P$ a maximal parabolic subgroup. Then a Levi factor $L$ of
$P$ is the quotient of $P$ by its maximal normal unipotent subgroup, and the
derived subgroup of $L$ modulo the centre has the form
$\bar L=\PSL_{n-k}(q)\times\PSL_k(q)$ for some $1\le k\le n/2$.
If $P$ contains an admissible~$H$, then $\bar L$ has property $(I_p)$, by
Lemma~\ref{lem:normal}. Thus, by induction, $k\le n-k\le e$ and so
$k=e$, $n=2e$, or we have $e=1$ and either we are in case~(2) or~(3) of
Proposition~\ref{prop:L2}, whence $k,n-k\le2$, or $p=3$, $q=4$ and $k,n-k\le3$.
We postpone the latter case for the moment. Now for $k=e$, $n=2e$
the permutation character of $G$ on $P$ contains exactly those unipotent
characters labelled by the constituents of the permutation character of
$\fS_e\times\fS_e$ inside $\fS_{2e}$, by Howlett--Lehrer theory. Thus it
contains the unipotent character $\rho$ labelled by the partition $(2e-1,1)$.
But that has only one $e$-hook, so $\rho$ does not lie in the principal block
by \cite{FS82} and we do not get an example when $n=2e$.
\par
Let us now discuss the exceptional cases with $e=1$. If $p=q-1\ge7$ is a
Mersenne prime, with $q=2^f$, and $n\le4$ then $H$ cannot lie in a parabolic
subgroup by Proposition~\ref{prop:not parabolic}. Assume then that $p=2$,
and $n\le4$. The permutation character of $G=\PSL_3(q)$ on its maximal
parabolic subgroups contains the unipotent character labelled $(2,1)$, while
this is not a constituent of~$\Phi_{1_G}$, by \cite[p.~253]{Ja90}. Also, the
permutation character of $G=\PSL_4(q)$ on a maximal parabolic subgroup of
type $\GL_2(q)^2$ contains the character labelled $(2,2)$, which does not
appear in $\Phi_{1_G}$ by \cite{Ja90} again. Finally, if $p=3$, $q=4$, then
$n\le6$. By \GAP\ we do not
get an example in $\PSL_4(4)$ with $p=3$. For $\PSL_5(4)$ and $\PSL_6(4)$ the
permutation characters on the relevant parabolic subgroups involve the
unipotent characters labelled $(4,1)$, $(4,2)$ respectively, neither of which
occurs in $\Phi_{1_G}$, by \cite[p.~258/259]{Ja90}. Hence these exceptional
cases do not propagate to further examples.
\par
This completes the discussion of the maximal parabolic subgroups. Now assume
that $n$ is even and $H$ is contained in (the image in $G$ of) one of
$M=\GL_{n/2}(q)\wr\fS_2$, $\GL_{n/2}(q^2).2$ or $\Sp_n(q)$. Since $e\le n/2$,
the order of $M$ is divisible by $p$. Induction shows that we must have
$e=n/2$, again. But any $p'$-subgroup of~$M$ has index at least
$(q^{n/2}-1)^2/(q-1)^2$, and thus order less than $q^{n(n-1)/2}$. Then it
cannot lead to an example by Lemma~\ref{lem:order}. The same argument
applies to the subfield subgroups in Proposition~\ref{prop:large SLn}(3).
For the groups $\PSL_n(2)$, $n\le4$, and $\PSL_3(4)$ all primes can be checked
using \GAP\ and only the case $p=3$ for $\PSL_3(4)$ arises.
\end{proof}

For later inductive purposes, let's point out the following consequence:

\begin{cor}   \label{cor:psl}
 Assume $\PSL_n(q)$, $n\ge2$, has property $(I_p)$, for $p$ dividing
 $|\PSL_n(q)|_{q'}$, and set
 $e:=e_p(q)$. Then either $n\le e$, or $e=1$ and one of $n=2$ or $n=p=q-1=3$.
\end{cor}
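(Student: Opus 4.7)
The plan is to deduce the corollary as a purely bookkeeping consequence of the two classification results already proved in this section: Proposition~\ref{prop:L2} for $n=2$ and Theorem~\ref{thm:psl} for $n\ge 3$. The hypothesis that $p$ divides $|\PSL_n(q)|_{q'}$ means $p$ is a non-defining prime, which lets us discard all the defining-characteristic items in Proposition~\ref{prop:L2} at the outset. So the whole argument reduces to translating the outputs of those two statements into the $e=e_p(q)$ language of the corollary.

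For $n\ge 3$ I would run through the two conclusions of Theorem~\ref{thm:psl}. Conclusion (1) is precisely $e_p(q)=n$, which gives $n\le e$ with equality. Conclusion (2) is the single exception $G=\PSL_3(4)$ at $p=3$; here $4\equiv 1\pmod 3$ gives $e=1$, and one reads off $n=p=q-1=3$, landing in the second alternative of the corollary. For $n=2$ I would go through the list in Proposition~\ref{prop:L2}. Items (4), (5) and (6) all have $p$ equal to the defining characteristic of $G$ (i.e.\ $p=7$ in $\PSL_2(7)$, $p=11$ in $\PSL_2(11)$, and $p=2$ with $q$ even), so the hypothesis $p\mid |G|_{q'}$ rules them out. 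In item~(1), $p$ is odd with $p\mid(q+1)$, so $q\equiv -1\pmod p$ and $e=2=n$. In items~(2) and~(3), $p\mid(q-1)$, so $e=1$ with $n=2$. Every surviving case therefore fits one of the two alternatives.

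There is no real obstacle here, since the substantive content is already in Proposition~\ref{prop:L2} and Theorem~\ref{thm:psl}; the only points requiring care are the bookkeeping check that $q-1=p=n$ in the $\PSL_3(4)$ exception and the verification that items (4)--(6) of Proposition~\ref{prop:L2} are the ones killed by the non-defining hypothesis on $p$. With that, the two alternatives $n\le e$ and ``$e=1$ with $n=2$ or $n=p=q-1=3$'' cover every remaining case exactly once.
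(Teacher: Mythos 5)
Your proof is correct and is exactly the deduction the paper intends (the paper states the corollary without proof as an immediate consequence of Proposition~\ref{prop:L2} and Theorem~\ref{thm:psl}). The only remark worth adding is that the $n=2$ case is in fact vacuous --- since $e\ge 1$ always, either $e\ge 2=n$ or $e=1$ with $n=2$ --- so the case-check through Proposition~\ref{prop:L2} there, while harmless, is not needed.
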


%%%%%%%%%%%%%%%%%%%%%%%%%%%%%%%%%%%%
\subsection{The unitary groups}
For the base case of the unitary series we can again also treat the defining
characteristic case (which does not lead to examples).
 
\begin{prop}   \label{prop:U3}
 Let $G=\PSU_3(q)$, $q\ge3$. Then $G$ satisfies $(I_p)$ if and only if one of
 \begin{enumerate}
  \item[\rm(1)] $p|(q+1)$, $H=O^p(B)$ where $B$ is a Borel subgroup of $G$; or
  \item[\rm(2)] $p|(q^3+1)$ but $p{\not|}(q+1)$, $H$ is a maximal parabolic
   subgroup, $\dim\Phi_{1_G}=q^3+1$.
 \end{enumerate}
\end{prop}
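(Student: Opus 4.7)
The plan is to split according to which cyclotomic factor of $|G|_{r'} = (q-1)(q+1)^2(q^2-q+1)/d$ (with $d = \gcd(3, q+1)$ and $r$ the defining characteristic) contains $p$. For $p = r$, $\Phi_{1_G}$ is the Steinberg module of dimension $q^3$; inspection of the maximal subgroups of $\PSU_3(q)$ (the largest $r'$-subgroups being torus normalisers, of order bounded by a constant multiple of $(q+1)^2$) shows no $r'$-subgroup has index $q^3$ for $q \ge 3$. If $p \mid q^2-q+1$ but $p \nmid q+1$, Sylow $p$-subgroups are cyclic (in the Coxeter torus); since $\PSU_3(q)$ has $\FF_q$-rank one, there is a single proper parabolic subgroup $B$, of order $q^3(q^2-1)/d$ coprime to~$p$, and $G$ acts 2-transitively on its $q^3+1$ cosets. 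Lemma~\ref{lem:2-trans} then gives case~(2), while Lemma~\ref{lem:cyc} combined with the explicit Brauer tree of Fong--Srinivasan rules out any further candidate, as $G$ has no other 2-transitive $p'$-action. For $p \mid q-1$ with $p > 2$, Sylow $p$-subgroups are again cyclic but no 2-transitive $p'$-action exists, and Lemma~\ref{lem:cyc} excludes examples.

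It remains to treat the case $p \mid q+1$ (which includes $p = 2$ for odd $q$). Inside $B = U \rtimes T$ with $|T| = (q^2-1)/d$, the subgroup $H := O^p(B) = U \rtimes T_{p'}$ is the unique normal $p'$-subgroup of $B$ of $p$-power index $|T_p|$, and $|G:H| = |T_p|(q^3+1)$. Since $B$ is $p$-nilpotent with normal $p$-complement $H$, its principal $p$-block is Morita equivalent to $k(B/H) \cong kT_p$, so $\Phi_{1_B} = \Ind_H^B(1_H)$. Inducing to $G$ yields a projective $kG$-module $\Ind_H^G(1_H)$, whose head contains the trivial module. To identify $\Ind_H^G(1_H)$ with $\Phi_{1_G}$, I would compute $\dim\Phi_{1_G}$ from the known decomposition matrices of the principal $p$-block of $\PSU_3(q)$ (worked out by Geck, Hiss, Okuyama, and others in both the cyclic and rank-two abelian defect cases), using the Fong--Srinivasan classification of characters in that block, and verify it equals $|T_p|(q^3+1)$. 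For the converse, Lemma~\ref{lem:order} forces any alternative $H'$ to have order at least $|O^p(B)|$; a glance at the maximal subgroups of $\PSU_3(q)$ shows $H'$ must lie in a Borel, and a Sylow-theoretic analysis in $B$ then identifies $H' = O^p(B)$.

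The main obstacle is the verification in the $p \mid q+1$ case that $\dim\Phi_{1_G} = |G : O^p(B)|$. When Sylow $p$-subgroups are abelian of rank two rather than cyclic, Brauer-tree methods do not apply and one must rely on explicit decomposition-matrix data for the principal $p$-block of $\PSU_3(q)$ from the literature; bookkeeping of which unipotent and non-unipotent characters lie in the principal block, and summing their contributions weighted by the decomposition numbers against the trivial Brauer character, is the technical heart of the argument.
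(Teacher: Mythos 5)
Your handling of the cases $p\mid q+1$ and $p\mid q^2-q+1$ is essentially the paper's argument (2-transitivity on the $q^3+1$ cosets of the Borel subgroup plus the Brauer tree for the latter; Geck's decomposition matrices to match $\dim\Ind_{O^p(B)}^G(k)$ with $\dim\Phi_{1_G}$ for the former). The other two cases, however, each contain a genuine error.

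In defining characteristic, $\Phi_{1_G}$ is \emph{not} the Steinberg module: $\St$ is simple and projective, hence its own projective cover, whereas the head of $\Phi_{1_G}$ is the trivial module; already for $\PSL_2(p^f)$ one has $\dim\Phi_{1_G}=(2^f-1)p^f\neq p^f$ (Proposition~\ref{prop:L2}). Since $\dim\Phi_{1_G}$ is in fact unknown for $\PSU_3(q)$ with $q>11$, your index count rests on a false premise and cannot be repaired by correcting the dimension. The paper sidesteps this entirely: it checks $q\le 11$ with \GAP, and for $q>11$ observes that the largest $p'$-subgroups are the torus normalisers of order $6(q+1)^2/\gcd(3,q+1)$, that $\Res^G_H(\St)$ contains the trivial character for such $H$, and that Lemma~\ref{lem:fp} (with Lemma~\ref{lem:order}) then excludes them because $\St$ is a nontrivial simple module of defect zero.

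For $2<p\mid q-1$ your appeal to Lemma~\ref{lem:cyc} fails because its hypothesis is violated: the trivial character \emph{is} adjacent to the exceptional vertex of the Brauer tree of the principal block. Indeed, Geck's formula $\dim\Phi_{1_G}=(q-1)\bigl(q^3+1-u(q^2+q+1)\bigr)/(2u)$ with $u=(q-1)_{p'}$ rewrites as $1+\tfrac{(q-1)_p-1}{2}(q^3+1)$, i.e.\ the $1$-PIM consists of the trivial character plus the exceptional characters of degree $q^3+1$, so the trivial vertex is joined precisely to the exceptional one and Lemma~\ref{lem:cyc} gives no information; the absence of a 2-transitive action proves nothing here. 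The paper instead uses this explicit dimension (bounded by $q^4/6$), rules out parabolic overgroups of $H$ by Proposition~\ref{prop:not parabolic}, and checks from the tables of maximal subgroups that no irreducible subgroup has index at most $\dim\Phi_{1_G}$. You need some substitute of this kind; as written, the $p\mid q-1$ case is open in your argument.
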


\begin{proof}
For $p$ dividing $q+1$ this follows from the decomposition matrix given in
\cite[Thm~4.3]{Ge90} when $p\ne 2,3$ and from \cite[Thm~4.5]{Ge90} when $p=3$.
When $p=2$ we use that the decomposition matrix of the Weyl group $C_2$ embeds
into that of $G$, so the 1-PIM does contain the trivial and the Steinberg
character once each, as does the permutation character on $O^2(B)$. Since the
unipotent characters form a basic set, the latter must indeed be indecomposable.
Now assume that $p|(q^3+1)$ but not $q+1$, so in particular
$p\ge5$. Then Sylow $p$-subgroups of $G$ are cyclic, and the permutation action
on the cosets of a maximal parabolic subgroup is 2-transitive. We conclude
using the Brauer tree from \cite[Thm~4.2]{Ge90}.   \par
Next assume that $p|(q-1)$ but $p\ne2$. Again Sylow $p$-subgroups of $G$ are
cyclic in this case, and by \cite[Thm~4.1]{Ge90} we have
$$\dim\Phi_{1_G}=(q-1)(q^3+1-u(q^2+q+1))/(2u)<q^4/6,$$
where $u=(q-1)_{p'}$. Now $H$ cannot be contained in a parabolic subgroup,
by Proposition~\ref{prop:not parabolic}. But there are no subgroups of order
at least $|G|/\dim\Phi_{1_G}$ that are irreducible, see
\cite[Tab.~8.5 and~8.6]{BHR}.   \par
Finally, assume $p|q$. For $q\le 11$ we may use \cite{GAP} to verify our claim.
For $q>11$ the decomposition matrix is not known at present. But for those $q$,
the $p'$-subgroups of largest order are contained in the
normalisers $H$ of maximal tori, of order $6(q+1)^2/\gcd(3,q+1)$, see
\cite[Tab.~8.5 and~8.6]{BHR}. Direct calculation shows that the restriction of
the Steinberg character of $G$ to $H$ contains the trivial character. Since the
Steinberg character is of $p$-defect zero and hence does not lie in the
principal $p$-block of $G$, this shows that $H$ does not lead to an example.
\end{proof}

As in the linear case we now determine relevant subgroups.

\begin{prop}   \label{prop:large SUn}
 Let $M$ be a maximal subgroup of $\PSU_n(q)$, $n\ge4$, of order at least
 $q^{n(n-1)/2}$. Then $M$ is the image in $\PSU_n(q)$ of the intersection with
 $\SU_n(q)$ of one of the following subgroups of $\GU_n(q)$:
 \begin{enumerate}
  \item[\rm(1)] a maximal parabolic subgroup;
  \item[\rm(2)] $\GU_d(q)\times\GU_{n-d}(q)$ with $1\le d<n/2$;
  \item[\rm(3)] $\GU_{n/2}(q)\wr\fS_2$, $\GL_{n/2}(q^2).2$, or $\Sp_n(q)$ when
   $n$ is even;
  \item[\rm(4)] $\GU_n(\sqrt q).2$ if $q$ is a square;
  \item[\rm(5)] $\SO_n(q)$ when $q,n$ are odd;
  \item[\rm(6)] $\SO_n^\pm(q)$ when $q$ is odd, $n$ is even;
  \item[\rm(7)] $3^3.\fS_4$ in $\PSU_4(2)$; $\fA_7$, $\PSL_3(4).2^2$ or
   $2^4.\fA_6$ in $\PSU_4(3)$; $3^4.\fS_5$ in $\PSU_5(2)$; or $3.M_{22}$ or
   $3_1.\PSU_4(3).2$ in $\PSU_6(2)$.
 \end{enumerate}
\end{prop}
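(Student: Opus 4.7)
The proof will parallel that of Proposition~\ref{prop:large SLn} for the linear groups, with the classification of geometric maximal subgroups of unitary groups (Aschbacher classes $\mathcal{C}_1$--$\mathcal{C}_8$) taken from \cite[Tab.~3.5.B]{KL}, together with Liebeck's bound from \cite{Lie85} for the generic case and the BHR tables \cite{BHR} for small rank.

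My plan is to proceed by splitting according to $n \ge 7$ versus $4\le n\le 6$. For $n \ge 7$ one checks that $q^{n(n-1)/2}\ge q^{3n}$, so every maximal subgroup $M$ satisfying our hypothesis has order at least~$q^{3n}$ and is therefore among those listed in the main theorem of \cite{Lie85}. From that list, restricted to type~$\PSU_n(q)$, I would go through Aschbacher classes in turn and retain only those whose order reaches $q^{n(n-1)/2}$: the maximal parabolics (class~$\mathcal{C}_1$) in~(1); the stabilisers of orthogonal decompositions, which in the unitary case split into the non-isotypic $\GU_d(q)\times\GU_{n-d}(q)$ giving~(2) and the isotypic wreath product $\GU_{n/2}(q)\wr\fS_2$ in~(3) (class~$\mathcal{C}_2$); the field-extension subgroup $\GL_{n/2}(q^2).2$ from class~$\mathcal{C}_3$ and the classical subgroup $\Sp_n(q)$ from class~$\mathcal{C}_5$, both appearing in~(3) and only when $n$ is even; the subfield subgroup $\GU_n(\sqrt q).2$ in~(4), obtained from class~$\mathcal{C}_5$ when $q$ is a square; and the orthogonal subgroups $\SO_n(q)$ or $\SO_n^\pm(q)$ from class~$\mathcal{C}_5$ for odd~$q$, giving~(5) and~(6). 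All other geometric families (tensor product subgroups in $\mathcal{C}_4$ and $\mathcal{C}_7$, and the extraspecial-normaliser class~$\mathcal{C}_6$) are easily seen to have order strictly below the cutoff $q^{n(n-1)/2}$ for $n\ge 7$ by a direct comparison of the order formulae in \cite[Tab.~3.5.B]{KL}.

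For the low-rank range $n\in\{4,5,6\}$ the bound $q^{3n}$ of Liebeck's theorem is larger than $q^{n(n-1)/2}$, so it no longer suffices, and extra almost-simple or primitive subgroups in Aschbacher's class~$\mathcal{S}$ may enter. Here I would read off the complete list of maximal subgroups from \cite[Tab.~8.10 and~8.26, Tab.~8.45--8.47, Tab.~8.56--8.59]{BHR} and retain exactly those of order at least $q^{n(n-1)/2}$. This produces the sporadic examples in~(7): $3^3.\fS_4\le\PSU_4(2)$, the three exceptional maximal subgroups $\fA_7$, $\PSL_3(4).2^2$, $2^4.\fA_6$ of $\PSU_4(3)$, $3^4.\fS_5\le\PSU_5(2)$, and $3.M_{22}$ together with $3_1.\PSU_4(3).2$ inside $\PSU_6(2)$; no further class~$\mathcal{S}$ example reaches the required order.

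The main obstacle is of bookkeeping rather than conceptual nature: one must reliably go through all Aschbacher classes and compare the (often unwieldy) order formulae against $q^{n(n-1)/2}$, in particular for the borderline subgroups like $\SO_n^\pm(q)$ and the tensor-product families, and then cross-check the small-rank tables in~\cite{BHR} to make sure no sporadic maximal subgroup of sufficient order has been overlooked.
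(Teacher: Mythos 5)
Your proposal matches the paper's proof, which is exactly the one-line argument "this follows from \cite{Lie85} using \cite[Tab.~3.5.B]{KL} and \cite{BHR}," carried out in the same way as for the linear groups: Liebeck's $q^{3n}$ bound covers $n\ge7$ since there $q^{n(n-1)/2}\ge q^{3n}$, and the BHR tables supply the exceptional cases for $n\le6$. Your write-up just makes explicit the Aschbacher-class bookkeeping that the paper leaves implicit.
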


\begin{proof}
This again follows from \cite{Lie85} using \cite[Tab.~3.5.B]{KL} and \cite{BHR}.
\end{proof}

We now show that generically, there are no examples for unitary groups.
Via Harish-Chandra theory the principal series unipotent characters of unitary
groups $\SU_n(q)$ are in bijection with characters of its Weyl group of type
$B_k$, $k=\lfloor n/2\rfloor$, see \cite[Prop.~4.3.6]{GM20}. Recall that the
irreducible characters of the Weyl group of type $B_k$ are labelled by
bi-partitions of $k$, see e.g.~\cite[5.5.4]{GP}. The following, which is shown
in
\cite[Prop.~6.4.7]{GP}, will be used to determine the constituents of the
permutation character on maximal parabolic subgroups:

\begin{lem}   \label{lem:const Sp}
 Let $W$ be the Weyl group of type $B_n$ and $W_d$ its standard parabolic
 subgroup of type $B_{n-d}\times A_{d-1}$, with $1\le d\le n$. Then the
 constituents of the permutation character of $W$ on $W_d$ are the irreducible
 characters labelled by bi-partitions $(n-d+k,l;d-k-l)$ with $0\le k,l$ and
 $k+l\le d$.
\end{lem}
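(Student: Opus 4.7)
The plan is to induce the trivial character of $W_d=W(B_{n-d})\times\fS_d$ up to $W=W(B_n)$ in two stages through the intermediate subgroup $W(B_{n-d})\times W(B_d)$. Recall that $W(B_m)=C_2\wr\fS_m$, with irreducibles labelled by bi-partitions $(\alpha;\beta)$ of $m$, and that for Young-type parabolic inductions the decomposition is Littlewood--Richardson componentwise:
$$\Ind_{W(B_a)\times W(B_b)}^{W(B_{a+b})}\bigl(\chi_{(\alpha_1;\beta_1)}\sqtimes\chi_{(\alpha_2;\beta_2)}\bigr)
  =\sum_{\alpha,\beta} c^{\alpha}_{\alpha_1,\alpha_2}\,c^{\beta}_{\beta_1,\beta_2}\,\chi_{(\alpha;\beta)}.$$

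For the inner stage $\Ind_{\fS_d}^{W(B_d)}(1)$, Frobenius reciprocity together with the standard identity $\Res_{\fS_d}^{W(B_d)}\chi_{(\alpha;\beta)}=\Ind_{\fS_{|\alpha|}\times\fS_{|\beta|}}^{\fS_d}(\chi_\alpha\sqtimes\chi_\beta)$ shows that $\chi_{(\alpha;\beta)}$ appears with multiplicity $1$ precisely when both $\alpha$ and $\beta$ are single-row partitions, so the inner induction equals $\sum_{k=0}^{d}\chi_{((d-k);(k))}$. Inducing each summand $\chi_{((n-d);\emptyset)}\sqtimes\chi_{((d-k);(k))}$ to $W$ via the rule above pins $\beta=(k)$, while on the $\alpha$-side Pieri's formula expands $s_{(n-d)}\,s_{(d-k)}$ as a multiplicity-free sum of two-row Schur functions $s_{(n-k-l,l)}$ with $0\le l\le\min(n-d,d-k)$.

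It remains to re-index. Substituting $k\mapsto d-k-l$ in the resulting double sum rewrites the list of constituents as $\chi_{((n-d+k,l);(d-k-l))}$ subject to $k,l\ge 0$ and $k+l\le d$, matching the statement (pairs whose first component fails the partition inequality $n-d+k\ge l$ simply contribute the zero character and can be ignored). The only substantive ingredient is the type $B$ Littlewood--Richardson rule, which is classical; the remainder is bookkeeping, and the result is in any case recorded as \cite[Prop.~6.4.7]{GP}. The main place to take care is to verify that the two parametrizations coincide exactly, i.e.\ that no unwanted bi-partitions appear and none are missed.
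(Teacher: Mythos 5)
Your two-stage induction through $W(B_{n-d})\times W(B_d)$ is the natural direct argument here, and it is a genuinely different route from the paper, which offers no proof at all but simply cites \cite[Prop.~6.4.7]{GP}. The first two steps are correct: $\Ind_{\fS_d}^{W(B_d)}(1)=\sum_{k=0}^d\chi_{((d-k);(k))}$ by Frobenius reciprocity and the restriction rule, and the componentwise Littlewood--Richardson rule together with Pieri then gives, multiplicity-freely, the constituents $\chi_{((n-k-l,\,l);(k))}$ with $0\le k\le d$ and $0\le l\le\min(n-d,\,d-k)$.

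The gap is in the final re-indexing, precisely at the point you flag as "the main place to take care." After substituting $k\mapsto d-k-l$, the Pieri constraint $l\le\min(n-d,\,d-k_{\mathrm{old}})$ becomes $k\ge0$ \emph{and} $l\le n-d$; the condition $l\le n-d$ (the horizontal-strip condition $\mu_2\le n-d$) is strictly stronger than merely requiring $(n-d+k,l)$ to be a partition, i.e.\ $l\le n-d+k$. So discarding non-partitions does not reconcile the two lists. Concretely, for $n=4$, $d=3$ the pair $(k,l)=(1,2)$ yields the perfectly good bi-partition $((2,2);\emptyset)$, which the statement's parametrization includes but which is not a constituent: here $s_{(1)}s_{(3)}=s_{(4)}+s_{(3,1)}$ has no $(2,2)$ term, and indeed the permutation character has degree $32$ and exactly $7$ constituents (count the orbits of $W(B_1)\times\fS_3$ on the $32$ cosets), whereas the statement's list of valid bi-partitions has $8$ members of total degree $34$. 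Your computation is therefore correct but proves a statement with the extra condition $l\le n-d$, which is what the lemma should say; the discrepancy only occurs for $2d\ge n+2$ and is invisible in all of the paper's applications, where $d$ is small relative to $n$. You should either add the condition $l\le n-d$ to the conclusion or note explicitly that you are assuming $2d\le n+1$.
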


\begin{thm}   \label{thm:PSU}
 Let $G=\PSU_n(q)$ with $n\ge4$ and $p$ a prime dividing $|G|_{q'}$.
 If $G$ satisfies $(I_p)$ then $p=2$ or $(p,q)=(3,2)$.
\end{thm}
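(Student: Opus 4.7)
The plan is to argue by induction on $n\ge 4$, closely following the strategy used in the linear case (Theorem~\ref{thm:psl}). Write $e:=e_p(q)$ and assume that $p$ is odd and $(p,q)\neq(3,2)$; the aim is to show that no $p'$-subgroup $H$ can give $G$ property $(I_p)$. If Sylow $p$-subgroups of $G$ are cyclic then, since $\PSU_n(q)$ with $n\ge 4$ admits no $2$-transitive permutation representation by \cite{C81}, Lemma~\ref{lem:cyc} immediately rules out an example, modulo checking that the trivial character is not connected to the exceptional node, which is known for the unitary Brauer trees. So assume from now on that Sylow $p$-subgroups of $G$ are non-cyclic. By Lemma~\ref{lem:order}, $H$ has order at least $q^{n(n-1)/2}$, so it is contained in one of the maximal subgroups listed in Proposition~\ref{prop:large SUn}, and one proceeds case by case.

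For a maximal parabolic subgroup $P$ of $G$ of type $d$, whose semisimple Levi quotient has composition factors $\PSL_d(q^2)$ and $\PSU_{n-2d}(q)$, the key tools are Lemma~\ref{lem:overgroup}, Lemma~\ref{lem:normal} and Corollary~\ref{cor:compfac}: if $H\le P$ then each simple composition factor of $P$ satisfies $(I_p)$. Induction on the present theorem applied to $\PSU_{n-2d}(q)$, together with Corollary~\ref{cor:psl} applied to $\PSL_d(q^2)$ and the base cases in Propositions~\ref{prop:L2} and~\ref{prop:U3}, forces both $d$ and $n-2d$ to be very small relative to~$e$. For the short residual list of pairs $(n,d)$, the plan is to combine Howlett--Lehrer theory with Lemma~\ref{lem:const Sp}, using the parametrisation of principal-series unipotent characters of $\PSU_n(q)$ by bi-partitions of $\lfloor n/2\rfloor$ (see \cite[Prop.~4.3.6]{GM20}), to exhibit an explicit unipotent constituent of $\Ind_P^G(1_P)$ whose bi-partition label has the wrong $e$-core/cohook shape to lie in the principal $p$-block, as classified in \cite{BMM}.

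For the non-parabolic maximal subgroups in parts (2)--(6) of Proposition~\ref{prop:large SUn}, the plan is to invoke Corollary~\ref{cor:compfac} together with induction: each simple composition factor---a smaller $\PSU$, a $\PSL$ over $\FF_{q^2}$, a $\PSp_n(q)$, or a $\SO_n^\pm(q)$---would itself need to satisfy $(I_p)$; by induction on the present theorem, by Corollary~\ref{cor:psl}, and by the corresponding results for the symplectic and orthogonal groups proved in later sections of the paper, this already eliminates almost all configurations. Any residual subgroup $M$ is then ruled out by showing that its $p'$-part is strictly smaller than the order of a Sylow $r$-subgroup of~$G$, contradicting Lemma~\ref{lem:order}, or by finding a constituent of $\Ind_M^G(1_M)$ outside the principal $p$-block. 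The small exceptional subgroups listed in part (7) of Proposition~\ref{prop:large SUn}, and the groups $\PSU_n(q)$ with $n$ or $q$ too small for the general inductive bound to apply, are handled by consulting the decomposition matrices available in~\cite{GAP}.

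The principal obstacle is the parabolic case: one must pin down, for each surviving quadruple $(n,d,q,e)$, an explicit unipotent constituent of $\Ind_P^G(1_P)$ that provably sits outside the principal $p$-block. This forces a case distinction on the parity of~$e$---because of Ennola duality, the $p$-block structure of $\PSU_n(q)$ is governed by $e$ when $e$ is odd and by $2e$ or $e/2$ otherwise---and very small~$q$ requires particular attention, which is reflected by the surviving exception $(p,q)=(3,2)$ in the statement.
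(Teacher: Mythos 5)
Your proposal follows essentially the same route as the paper: reduce via Lemma~\ref{lem:order} to the maximal subgroups of Proposition~\ref{prop:large SUn}, handle the cyclic-Sylow case with Lemma~\ref{lem:cyc} and \cite{C81}, constrain parabolic Levi factors by induction together with Corollary~\ref{cor:psl}, kill the survivors with an explicit unipotent constituent outside the principal $p$-block via \cite{FS89}, dispose of the non-parabolic subgroups by comparing $p'$-parts with $q^{n(n-1)/2}$, and check the small exceptions in \GAP. The only substance you leave open --- the precise parameter $e'$ (equal to $2e$, $e/2$ or $e$ according to $e\bmod 4$) and the specific bi-partition $(\lfloor(n-2)/2\rfloor:1)$ needed when $e\equiv2\pmod4$, the other two parities yielding a purely numerical contradiction --- is exactly what the paper supplies, so your plan is sound.
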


\begin{proof}
Throughout we may and will assume $p>2$ and $(p,q)\ne(3,2)$. Set $e:=e_p(q)$ and
$e':=2e$ when $e$ is odd, $e':=e/2$ when $e\equiv2\pmod 4$, and $e':=e$ when
$e\equiv0\pmod4$. The Sylow $p$-subgroups of $G$ are cyclic when $e'>n/2$, and
in this case no examples with $p$ dividing $|G|$ can arise by
Lemma~\ref{lem:cyc} combined with \cite{C81} and the Brauer trees in
\cite{FS90}. So assume $e'\le n/2$. Let $H<G$ be a $p'$-subgroup such that
$\Phi_{1_G}=\Ind_G^H(1_H)$. Since a Sylow $r$-subgroup of $G$, for $r|q$, has
order $q^{n(n-1)/2}$, $H$ will lie in one of the maximal subgroups $M$ listed
in Proposition~\ref{prop:large SUn}. First assume $M$ is a maximal parabolic
subgroup. Then its Levi subgroups have a subquotient
$\PSU_{n-2d}(q)\times\PSL_d(q^2)$ for some $1\le d\le n/2$. By our results on
the linear groups (Corollary~\ref{cor:psl}) we must have $d\le e$; the case
$e=1$, $d=2$ cannot occur because $\GL_2(q^2)$ is not an example for $e=1$ by
Proposition~\ref{prop:L2}. 
\par
First assume $e$ is odd. Then $n-2d<e'=2e$ by induction, so
$2e=e'\le n/2 <e+d\le 2e$, which is not possible. If $e=2e'$ is twice an odd
number, again by induction we have $d\le e/2$ and $n-2d<e'$, whence
$2e'\le n< 3e'$ and $d>(n-e')/2\ge e'/2$. In this case the permutation
character on $P$ contains the unipotent character labelled by the bi-partition
$\mu=(\lfloor(n-2)/2\rfloor:1)$, hence by the partition
$$\la=\begin{cases} (n-2,2)& \text{if $n$ is even,}\\
  (n-2,1^2)& \text{if $n$ is odd,}\end{cases}$$
with $2$-quotient $\mu$.
Since the $e'$-core of $\la$ is not equal to the $e'$-core of $(n)$, this
unipotent character does not lie in the principal $p$-block of $G$ by
\cite{FS89}, so $H$ cannot be contained in $P$.
\par
Finally, if $e$ is divisible by~4, then our assumptions and induction force
$n-2d<e$ and $d\le e/2$, so $e\le n/2 <(2d+e)/2\le e$, hence no case arises.
This completes the discussion of maximal parabolic subgroups.
\par
The groups listed in Proposition~\ref{prop:large SUn}(2)--(6) have order
divisible by $p$, and their largest $p'$-subgroups have smaller order than
a Sylow $r$-subgroup of $G$. Finally, the cases in
Proposition~\ref{prop:large SUn}(7) do not lead to examples by \GAP.
\end{proof}

We conclude our discussion of the unitary groups by dealing with the two
cases left open in the previous result.

\begin{prop}    \label{prop:Un(2)}
 Let $G=\PSU_n(2)$ with $n\ge4$. Then $G$ satisfies $(I_3)$ if and only if
 $n\le7$. Here, $H=[2^{(n^2-n-4)/2}].D_5$
%  \item[\rm(2)] $q=2$, $p=3$, $H=2^4.D_5$, $\dim\Phi_{1_G}=162$.
%  \item[\rm(2)] $q=2$, $p=3$, $H=[2^8].D_5$, $\dim\Phi_{1_G}=5346$.
\end{prop}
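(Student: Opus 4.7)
The plan is to treat the two directions of the equivalence separately.

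For the ``if'' direction ($n\in\{4,5,6,7\}$), I would realize $H=[2^{(n^2-n-4)/2}].D_5$ explicitly inside the end-node maximal parabolic $P_2$ of $G=\PSU_n(2)$ stabilizing a totally isotropic $2$-space. Writing $P_2=U\rtimes L$ with $U$ the unipotent radical (a $2$-group) and $L$ a Levi containing $\SL_2(4)\cong\fA_5$ together with the residual $\SU_{n-4}(2)$ factor when $n\ge 5$, the dihedral $D_5$ (order $10$) arises as the normalizer of a Singer cycle $C_5\le\fA_5$, while a Sylow $2$-subgroup of $\SU_{n-4}(2)$ is absorbed into the large $2$-part of $H$, so that $|H|$ equals the full $3'$-order of $P_2$. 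One checks in each case ($n=4,5,6,7$) that the count $n(n-1)/2-2$ for the exponent of $2$ matches $|U|\cdot(\text{Sylow $2$ of $\SU_{n-4}(2)$})$. Once $H$ is in hand, the equality $\Phi_{1_G}=\Ind_H^G(1_H)$ is a finite check using the decomposition matrix of $\PSU_n(2)$ at $p=3$ in \GAP\ \cite{GAP}: one verifies that $\Ind_H^G(1_H)$ has trivial head and is projective indecomposable.

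For the ``only if'' direction, let $n\ge 8$ and assume for contradiction that $G=\PSU_n(2)$ has $(I_3)$ with respect to some $3'$-subgroup $H$. By Lemma~\ref{lem:order}, $|H|\ge|G|_2=2^{n(n-1)/2}$, so Proposition~\ref{prop:large SUn} forces $H$ into one of the listed maximal subgroups $M$. For $q=2$, items (4)--(6) of Proposition~\ref{prop:large SUn} are vacuous and item (7) concerns only $n\le 6$. For the subsystem-subgroup cases (2)--(3), a direct bound on the maximal $3'$-order inside $M$ (essentially the product of Sylow $2$-orders of the Lie-type factors together with small Singer-normalizer contributions) against the $3'$-subgroups available in a maximal parabolic of $G$ rules these cases out via Lemma~\ref{lem:order}. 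Hence $M=P_d$ is a maximal parabolic.

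By Lemma~\ref{lem:normal} applied to the unipotent radical $U_d\unlhd P_d$, which is a normal $p'$-subgroup here, the Levi quotient $L_d$ has $(I_3)$, and Corollary~\ref{cor:compfac} forces both simple composition factors $\PSL_d(4)$ and $\PSU_{n-2d}(2)$ (whenever these are non-abelian simple) to satisfy $(I_3)$. Since $e_3(4)=1$, Corollary~\ref{cor:psl} combined with Proposition~\ref{prop:L2} restricts $d$ to $\{1,2,3\}$, and induction on the present proposition applied to $\PSU_{n-2d}(2)$ forces $n-2d\le 7$. Thus only the finite range $n\in\{8,\ldots,13\}$ with a small admissible set of $d$ remains.

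The main obstacle lies in ruling out these residual cases. My plan is to decompose $\Ind_{P_d}^G(1_{P_d})$ via Howlett--Lehrer theory using Lemma~\ref{lem:const Sp} and then invoke the Fong--Srinivasan classification \cite{FS89} of unipotent $3$-blocks of unitary groups to exhibit a unipotent constituent labelled by a partition whose unitary $3$-core differs from that of $(n)$. Such a constituent lies outside the principal $3$-block of $G$, contradicting the containment $\Phi_{1_G}=\Ind_H^G(1_H)\le\Ind_{P_d}^G(1_{P_d})$. For pairs $(n,d)$ where a uniform block argument proves elusive, the finitely many remaining cases can be cleared by direct computation using the decomposition data for $\PSU_n(2)$, $n\le 13$, accessible in \GAP.
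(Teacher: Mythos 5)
Your reduction for $n\ge 8$ (Lemma~\ref{lem:order} plus Proposition~\ref{prop:large SUn} to force $H$ into a maximal parabolic, then induction to get $d\le 3$ and $n-2d\le 7$, hence $n\le 13$) matches the paper. But your main tool for killing the residual cases does not work here: for $q=2$ and $p=3$ one has $e_3(2)=2$, so the Fong--Srinivasan parameter $e'$ equals $1$, every partition has empty $1$-core, and consequently \emph{all} unipotent characters of $\PSU_n(2)$ lie in the principal $3$-block. There is no unipotent constituent of $\Ind_{P_d}^G(1_{P_d})$ outside the principal block to be found -- this is exactly why the pair $(p,q)=(3,2)$ had to be excluded from Theorem~\ref{thm:PSU} and treated separately. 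The paper's substitute is a finer containment argument: since $\Phi_{1_G}$ is a summand of the Harish-Chandra induced 1-PIM from a parabolic of type $\GL_d(4)$ (whose unipotent part is known from Theorem~\ref{thm:psl}/\cite{Ja90}), while $\Ind_{P}^G(1_{P})$ would be a subcharacter of $\Phi_{1_G}$ if $H\le P$, it suffices to exhibit a constituent of the permutation character on $P$ (via Lemma~\ref{lem:const Sp}) that is absent from that induced 1-PIM; the paper does this with the bi-partitions $(31;-)$, $(41;-)$, $(42;-)$ for $n=8,\dots,13$. Your fallback of consulting decomposition matrices of $\PSU_n(2)$ for $n\le 13$ in \GAP\ is not available: these matrices are unknown for such groups.

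The same availability problem affects your ``if'' direction at $n=7$: the $3$-modular decomposition matrix of $\PSU_7(2)$ is not in the \GAP\ library, so the ``finite check'' you propose cannot be carried out there (it works only for $n=4,5,6$). The paper instead proves $(I_3)$ for $\PSU_7(2)$ theoretically, by observing that the Harish-Chandra inductions of the 1-PIMs from the parabolics of types $\GU_5(2)$ and $\GL_3(4)$ have the same ordinary constituents, hence coincide, and then establishing indecomposability by Harish-Chandra restricting every proper subcharacter back to the two maximal parabolics and using that the unipotent characters form a basic set \cite{FS82}. You would need to supply an argument of this kind; as written, both the $n=7$ existence claim and the $8\le n\le 13$ exclusion rest on computations or block-theoretic facts that are not available or are false in this configuration.
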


\begin{proof}
The claim for $n=4,5,6$ can be checked using \GAP. Now let $G=\PSU_7(2)$. The
Harish-Chandra induction $\Psi$ of the two 1-PIMs of the parabolic subgroups of
types $\GU_5(2)$ and $\GL_3(4)$ to $G$ have the same ordinary constituents and
hence agree. We claim they are indecomposable and thus $G$ satisfies $(I_3)$.
By Lemma~\ref{lem:normal} it is sufficient to show the analogous statement for
$\GU_7(2)$. Here, by \cite[Thm~(8A)]{FS82} the unipotent characters
form a basic set for the unipotent blocks. The Harish-Chandra restriction of
any proper non-zero subcharacter of $\Psi$ to the two types of maximal parabolic
subgroups does not decompose as a non-negative integral linear combination of
projectives, so indeed $\Psi$ is indecomposable.   \par
Now assume $n\ge8$. We argue that a putative $p'$-subgroup $H$ cannot be
contained inside any of the subgroups listed in
Proposition~\ref{prop:large SUn}. Let first $P$ be a parabolic subgroup, of
type $\GU_{n-2d}(2)\GL_d(4)$, for some $1\le d\le n/2$. By induction and
Theorem~\ref{thm:psl} we then have $d\le3$ and $n-2d\le7$, so $n\le13$. By
Lemma~\ref{lem:const Sp}, for
$n=8$ and $n=9$ the permutation character on $P$ contains the unipotent
character labelled by the bi-partition $(31;-)$ which is not a constituent of
the induced 1-PIM from a parabolic subgroup of type $\GL_4(4)$,
for $n=10,11$ it contains $(41;-)$ which is not in the 1-PIM induced from
$\GL_5(4)$, and for $n=12,13$ the character labelled $(42;-)$ has this property.
\par
For the non-parabolic subgroups in Proposition~\ref{prop:large SUn} we can
argue as in the generic case in Theorem~\ref{thm:PSU}.
\end{proof}

\begin{prop}    \label{prop:Un p=2}
 Let $G=\PSU_n(q)$ with $n\ge4$. Then $G$ satisfies $(I_2)$ if and only if
 $n\le5$. Here, $H=O^2(B)$, for $B$ a Borel subgroup of $G$.
%  \item[\rm(1)] $p=2$, $H=O^2(B)$,
%   $\dim\Phi_{1_G}=(q-1)_2(q^2-1)_2(q+1)(q^2+1)(q^3+1)/\gcd(q+1,4)$; or
%  \item[\rm(1)] $p=2$, $H=O^2(B)$,
%   $\dim\Phi_{1_G}=(q^2-1)^2_2(q^2+1)(q^3+1)(q^5+1)$; or
\end{prop}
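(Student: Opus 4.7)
The plan is to mirror the approach of Proposition~\ref{prop:Un(2)}. Since
$p=2$ is non-defining we have $q$ odd, and then $e=e_2(q)=1$,
$e'=2\le n/2$ for $n\ge4$, so Sylow $2$-subgroups of $G$ are non-cyclic and
Lemma~\ref{lem:cyc} is unavailable. By Lemma~\ref{lem:order} any admissible
$2'$-subgroup $H\le G$ satisfies $|H|\ge|O^2(B)|\ge q^{n(n-1)/2}=|G|_r$, so
by Proposition~\ref{prop:large SUn} I only need to consider $H$ lying in a
maximal parabolic subgroup, in one of the subsystem, subfield, orthogonal or
symplectic maximal subgroups listed there, or in one of the small exceptional
maximal subgroups for tiny $(n,q)$.

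For the ``if'' direction ($n\in\{4,5\}$), I would show that $H=O^2(B)$ does
the job. Writing $B=U\rtimes T$ with unipotent radical $U$ and maximally
split torus $T$, the subgroup $O^2(B)=U\rtimes T_{2'}$ is a normal
$2$-complement in the $2$-solvable group $B$, so
$\Ind_{O^2(B)}^B(k)=\Phi_{1_B}$ and hence
$\Ind_{O^2(B)}^G(k)=\Ind_B^G(\Phi_{1_B})$ is the Harish--Chandra induction of
the $1$-PIM of $B$. Its ordinary character is the regular character of
$W=W(B_{\lfloor n/2\rfloor})$ realised through the principal series unipotent
characters of $G$. By \cite[Thm~(8A)]{FS82} the unipotent characters form a
basic set for the union of unipotent $2$-blocks, so the indecomposability of
$\Ind_B^G(\Phi_{1_B})$ reduces to showing that the Harish--Chandra restriction
of every proper non-zero subcharacter to some maximal parabolic fails to be a
non-negative integer combination of projectives; this in turn is a finite
computation controlled by the $1$-PIMs of smaller unitary and linear groups
at $p=2$, themselves known inductively together with
Proposition~\ref{prop:L2}(3).

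For the ``only if'' direction, suppose $n\ge6$. Each non-parabolic maximal
subgroup in Proposition~\ref{prop:large SUn} has $2'$-part strictly below
$q^{n(n-1)/2}$ and is excluded by the order bound exactly as in
Theorem~\ref{thm:PSU}. The remaining case is a maximal parabolic $P$ whose
Levi factor has derived quotient of type
$\PSU_{n-2d}(q)\times\PSL_d(q^2)$. By Lemma~\ref{lem:normal} combined with the
inductive hypothesis (and Proposition~\ref{prop:U3}, Proposition~\ref{prop:L2}
for the base cases $n-2d\le3$) and Corollary~\ref{cor:psl} applied to
$\PSL_d(q^2)$ at $p=2$ (which forces $d\le2$), the only possibilities left
are $n\in\{6,7,8,9\}$ with $d\in\{1,2\}$ subject to $n-2d\le5$. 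For each
such $(n,d)$ I would use Lemma~\ref{lem:const Sp} to locate a unipotent
constituent of $\Ind_P^G(1)$ labelled by a partition $\la$ of $n$ whose
$2$-core differs from that of $(n)$; by Fong--Srinivasan \cite{FS89} such
a constituent lies outside the principal $2$-block of $G$, contradicting
projectivity of the permutation module on $H$.

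The step I expect to be the main obstacle is the uniform-in-$q$ verification
for $n=4,5$, since the $2$-modular decomposition matrices of $\PSU_4(q)$ and
$\PSU_5(q)$ are not tabulated for general odd $q$. The Harish--Chandra/basic
set argument must be executed in detail (computing the Harish--Chandra
restriction in each principal series and checking that no proper subcharacter
survives projectivity), complemented by a \GAP\ check for small $q$ as a
sanity test; identifying the required ``wrong $2$-core'' constituent in the
``only if'' direction is routine but case-by-case.
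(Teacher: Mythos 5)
There is a genuine gap in your ``only if'' direction. For $6\le n\le 9$ you propose to exhibit a unipotent constituent of $\Ind_P^G(1_P)$ lying outside the principal $2$-block via a $2$-core criterion and \cite{FS89}. This cannot work at $p=2$: for $q$ odd the group $\GU_n(q)$ has a \emph{unique} unipotent $2$-block (this is exactly the fact from \cite{En00} that the paper invokes in the $n=4$ case), so every unipotent character lies in the principal $2$-block and no such constituent exists. The block-theoretic exclusion you are importing from Theorem~\ref{thm:PSU} is only available there because that theorem assumes $p>2$. The paper's actual argument is a comparison of two projective characters: if $H\le P$ then every constituent of $\Ind_P^G(1_P)$ occurs in $\Phi_{1_G}$, while $\Phi_{1_G}$ is a summand of the Harish--Chandra induced $1$-PIM from \emph{any other} parabolic subgroup; one then checks (via Lemma~\ref{lem:const Sp} and the known $1$-PIMs of $\GL_3(q^2)$, $\GL_4(q^2)$ at $p=2$) that the constituent $(21;-)$ for $n=6,7$, respectively $(2^2;-)$ for $n=8,9$, of $\Ind_P^G(1_P)$ does not occur in the $1$-PIM induced from the parabolic of type $\GL_3(q^2)$, resp.\ $\GL_4(q^2)$. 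Your plan as stated would not reach a contradiction.

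The ``if'' direction is also not complete as written: you reduce indecomposability of $\Ind_{O^2(B)}^G(k)$ to a Harish--Chandra restriction analysis of all proper subcharacters and explicitly flag the uniform-in-$q$ execution as the main obstacle. The paper closes this with a short argument you are missing: take $\Psi$ to be the Harish--Chandra induction to $\GU_4(q)$ of the $1$-PIM of the Levi factor $\GL_2(q^2)$ (which contains both unipotent characters of $\GL_2(q^2)$ once); its unipotent part realises the regular character of $W(B_2)$. Since the decomposition matrix of $W(B_2)$ embeds into that of $\GU_4(q)$ and $W(B_2)$ is a $2$-group, the unipotent part of $\Phi_{1_{\GU_4(q)}}$ must contain the full regular character of $W(B_2)$, i.e.\ all of the unipotent part of $\Psi$; as $\Phi_{1_{\GU_4(q)}}$ is also a summand of $\Psi$, and the unipotent characters form a basic set for the unique unipotent $2$-block, equality $\Psi=\Phi_{1_{\GU_4(q)}}$ follows, and the same works verbatim for $n=5$. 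You should replace your subcharacter analysis by this argument (or something equally uniform in $q$); as it stands the proposal establishes neither direction.
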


\begin{proof}
First let $n=4$. Here the Levi factors of both maximal parabolic subgroups have
property $(I_2)$ by Proposition~\ref{prop:L2}, as does the subgroup
$\PSp_4(q)$. In view of Corollary~\ref{cor:compfac}, to show that $G$ has
property $(I_2)$ it suffices to prove this for $\tilde G:=\GU_4(q)$. Now the
1-PIM of the Levi factor $\GL_2(q^2)$ contains both unipotent characters
once, and thus its Harish-Chandra induction $\Psi$ to $\tilde G$ contains all
unipotent characters, with the multiplicity given by the character degrees in
the Weyl group $W(B_2)$ \cite[Thm~3.2.27]{GM20}. On the other hand, the
decomposition matrix of the Weyl group embeds into the decomposition matrix of
$\tilde G$; since $W(B_2)$ is a 2-group, its 1-PIM is the regular
representation.
This means that the 1-PIM of $\tilde G$ contains exactly the same unipotent
constituents as $\Psi$. Since Harish-Chandra induction sends unipotent
blocks to unipotent blocks \cite[Prop.~3.3.20]{GM20}, $\tilde G$ has a unique
unipotent 2-block \cite{En00}, and the unipotent characters form a basic set
for the unipotent blocks \cite{FS82}, this
shows that indeed $\Psi=\Phi_{1_{\tilde G}}$, so $\tilde G$ satisfies $(I_2)$.
\par
For $n=5$ the exactly same argument as for $n=4$ shows that again $H=O^2(B)$ is
an admissible subgroup. For $n\ge6$ first consider maximal parabolic subgroups.
For $P$ of type $\GU_{n-2d}(q)\GL_d(q^2)$ we have, by induction, $n-2d\le5$
and $d\le2$, so $n\le9$. Here, for $n=6,7$ the unipotent constituent
labelled by the bi-partition $(21;-)$ of the permutation
character on $P$ does not occur in the induced 1-PIM of the parabolic subgroup
of type $\GL_3(q^2)$, and for $n=8,9$, it is the constituent $(2^2;-)$ which
is not in the induced 1-PIM from $\GL_4(q^2)$.
\par
This deals with parabolic overgroups of a possible subgroup $H$. The arguments
for the non-parabolic maximal subgroups in Proposition~\ref{prop:large SUn} are
as in the generic case.
\end{proof}

%%%%%%%%%%%%%%%%%%%%%%%%%%%%%%%%%%%%
\subsection{The symplectic groups}
We next discuss the induction base for symplectic groups.

\begin{prop}   \label{prop:Sp4}
 Let $G=\PSp_4(q)$, $q\ge3$, and $p$ a prime dividing $|G|_{q'}$. Then $G$
 satisfies $(I_p)$ if and only if $p=2$, $H=O^2(B)$,
 $\dim\Phi_{1_G}=(q-1)_2^2(q+1)^2(q^2+1)$, where $B$ is a Borel subgroup of $G$.
\end{prop}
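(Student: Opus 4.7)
The plan is to establish the ``if'' direction by mirroring the argument for $\PSU_4(q)$ in Proposition~\ref{prop:Un p=2}, and then rule out odd primes by a case analysis on $e := e_p(q)$.

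For the ``if'' direction at $p=2$, by Corollary~\ref{cor:compfac} it suffices to work with $\tilde G := \Sp_4(q)$. Set $H := O^2(B)$, noting that the unipotent radical of $B$ has order $q^4$ (which is a $2$-power when $q$ is even and odd when $q$ is odd), while the torus modulo its odd part is an elementary abelian $2$-group; so $H$ is the largest $2'$-subgroup of $B$, with the right index. Harish-Chandra inducing the regular character of $T/O^2(T)$ from $B$ to $\tilde G$ yields, by Howlett--Lehrer theory, a sum of principal series unipotent characters with multiplicity equal to the degree in the Weyl group $W = W(C_2)$. Since $W$ is a $2$-group, its $1$-PIM in characteristic $2$ is the regular representation, and the decomposition matrix of $W$ embeds into the decomposition matrix of the unipotent blocks of $\tilde G$. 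Because Harish-Chandra induction respects unipotent blocks \cite[Prop.~3.3.20]{GM20}, $\tilde G$ has a single unipotent $2$-block by \cite{En00}, and the unipotent characters form a basic set for the unipotent blocks \cite{FS82}, the induced character $\Ind_H^{\tilde G}(k)$ and $\Phi_{1_{\tilde G}}$ must agree.

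For the ``only if'' direction, suppose $p$ is odd with $p\mid|G|_{q'}$. From $|\Sp_4(q)|_{q'} = \Ph1(q)^2\Ph2(q)^2\Ph4(q)$ we have $e \in \{1,2,4\}$. When $e=4$, Sylow $p$-subgroups of $G$ are cyclic; the trivial character is not connected to the exceptional node of the principal Brauer tree by \cite{FS90}, and $G$ possesses no $2$-transitive permutation representation by \cite{C81}, so Lemma~\ref{lem:cyc} rules out this case. When $e=1$, $p$ is coprime to $|W(C_2)|=8$, so by Proposition~\ref{prop:not parabolic} any candidate $H$ lies in no proper parabolic; the remaining large maximal subgroups from \cite{Lie85} together with \cite[Tab.~8.12--8.13]{BHR} (essentially $\Sp_2(q)\wr\fS_2$, $\Sp_2(q^2).2$, $\GO_4^\pm(q)$ for $q$ odd, and the small exceptional subgroups) can be dismissed by comparing the $p'$-part of $|M|$ with $|G|/\dim\Phi_{1_G}$, using the decomposition matrices in \cite{Wh90}-type references (or the basic set argument) to compute $\dim\Phi_{1_G}$.

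The remaining and most delicate case is $e = 2$, i.e., $p\mid q+1$. Here Proposition~\ref{prop:not parabolic} does not apply. For a maximal parabolic $P$ with semisimple Levi part $\SL_2(q)$, Proposition~\ref{prop:L2} forces $H\cap L$ to be a Borel of $L$, and then Howlett--Lehrer decomposes $\Ind_P^G(1_P)$ into two unipotent constituents whose labelling symbols have different $e$-cores from that of the trivial character, hence lie in distinct $p$-blocks by \cite{FS89}; this contradicts $\Ind_H^G(k) = \Phi_{1_G}$. The non-parabolic maximal subgroups are handled by the same order/index comparison as in the $e=1$ case, noting that the cuspidal unipotent character $\theta_{10}$ of $\Sp_4(q)$ has $p$-defect zero when $e=2$ and so cannot appear in any projective indecomposable summand of $\Phi_{1_G}$. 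The main obstacle is this case $e=2$: verifying that every Liebeck maximal overgroup and every parabolic is indeed excluded requires combining the explicit block distribution of unipotent characters with careful index estimates, but no new conceptual difficulty arises beyond what has been used in the unitary and linear cases above.
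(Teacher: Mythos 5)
Your overall strategy is sound and your treatment of $e_p(q)=4$ (cyclic Sylow, Lemma~\ref{lem:cyc} with \cite{C81} and \cite{FS90}) and of $e_p(q)=1$ (Proposition~\ref{prop:not parabolic} plus order comparison against the $p'$-subgroup of order $q^4$) coincides with the paper's. The two places where you diverge are the following. For $p=2$ you reconstruct the $1$-PIM by Harish--Chandra induction, the embedding of the decomposition matrix of $W(B_2)$, and the basic-set/unique-unipotent-block argument, exactly as the paper does for $\GU_4(q)$ in Proposition~\ref{prop:Un p=2}; the paper instead simply reads the answer off White's explicitly computed $2$-decomposition matrix \cite[Thm~3.1]{Wh90}. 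Your version is more self-contained but proves nothing the citation does not already give. More significantly, for the case you call ``the most delicate'', $e_p(q)=2$, you miss the paper's one-line knockout: by \cite[Thm~4.2]{Wh90b} one has $\dim\Phi_{1_G}=(q+1)(q^3+1)=(q+1)^2(q^2-q+1)$, and since $q^2-q+1$ is coprime to $q(q^2-1)(q^2+1)$ this does not divide $|G|$, so $\Phi_{1_G}$ cannot be a permutation module on \emph{any} subgroup and no maximal-subgroup analysis is needed. Your substitute (excluding each Liebeck overgroup separately) can be pushed through, but it is exactly the kind of case analysis the divisibility obstruction is designed to avoid.

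Two factual slips in your $e=2$ discussion should be corrected even if you keep that route. First, $\Ind_P^G(1_P)$ for a maximal parabolic $P$ has \emph{three} unipotent constituents, not two (the permutation character of $W(B_2)$ on a rank-one parabolic has degree $4=1+1+2$). Second, the cuspidal unipotent character $\theta_{10}$ has degree $q(q-1)^2/2$, which is coprime to $p$ when $p\mid(q+1)$, so it has \emph{full} defect there, not defect zero; the defect-zero unipotent character for $e=2$ is the principal-series one of degree $q(q+1)^2/2$, and it is this character, occurring in $\Ind_P^G(1_P)$, that actually supplies the non-principal-block constituent your parabolic exclusion needs.
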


\begin{proof}
It follows from the decomposition matrix given in \cite[Thm~3.1]{Wh90} that we
get the stated example for $p=2$. Now assume that $p$ is odd. For $p|(q^2+1)$
the
Sylow $p$-subgroups are cyclic, so we can conclude with Lemma~\ref{lem:cyc},
using that $\PSp_4(q)$ does not have a corresponding 2-transitive permutation
representation for $q>2$, by \cite{C81}. If $p|(q+1)$, the decomposition matrix
in \cite[Thm~4.2]{Wh90b} shows that $\dim\Phi_{1_G}=(q+1)(q^3+1)$, which does
not divide $|G|$. Finally, when $p|(q-1)$, then  a putative $p'$-subgroup $H$
cannot lie inside a parabolic subgroup, by
Proposition~\ref{prop:not parabolic}.
Any other subgroup, by \cite[Tab.~8.12 and~8.13]{BHR} has $p'$-subgroups of
order smaller than $q^4$, and so cannot contain a suitable $H$.
\end{proof}

\begin{prop}   \label{prop:large Sp_n}
 Let $M$ be a maximal subgroup of $\PSp_{2n}(q)$, $n\ge3$, of order at least
 $q^{n^2}$. Then $M$ is the image in $\PGL_{2n}(q)$ of one of the following
 subgroups of $\Sp_{2n}(q)$:
 \begin{enumerate}
  \item[\rm(1)] a maximal parabolic subgroup;
  \item[\rm(2)] $\Sp_{2d}(q)\times\Sp_{2n-2d}(q)$ with $1\le d< n/2$;
  \item[\rm(3)] $\Sp_n(q)\wr\fS_2$ or $\Sp_n(q^2).2$ when $n$ is even
  \item[\rm(4)] $\GL_n(q).2$ or $\GU_n(q).2$;
  \item[\rm(5)] $\Sp_{2n}(\sqrt{q})$ if $q$ is a square;
  \item[\rm(6)] $\GO_{2n}^\pm(q)$ if $q$ is even;
  \item[\rm(7)] $\Sp_2(q^3).3$ or $\Sp_2(q)\wr\fS_3$ when $n=3$;
  \item[\rm(8)] $G_2(q)$ when $n=3$ and $q$ is even;
  \item[\rm(9)] $2.J_2$ in $\PSp_6(5)$ or $\PSp_6(9)$; $\fS_{10}$ in
   $\PSp_8(2)$; or $\fS_{14}$ in $\PSp_{12}(2)$.
 \end{enumerate}
\end{prop}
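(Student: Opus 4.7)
The proof plan is to mirror the strategy of Propositions~\ref{prop:large SLn} and~\ref{prop:large SUn}: combine Liebeck's generic bound \cite{Lie85} with the explicit low-dimensional tables of \cite{KL,BHR}. Recall that \cite{Lie85} classifies the maximal subgroups of $\PSp_{2n}(q)$ of order at least $q^{3d}=q^{6n}$, where $d=2n$ is the dimension of the natural module: each such subgroup is either a member of one of Aschbacher's geometric classes or appears in a short explicit list of exceptions.

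First I would observe that $n^2\ge 6n$ exactly when $n\ge 6$, so for $n\ge 6$ every candidate is covered by Liebeck's theorem. It then suffices to run through the geometric maximal subgroups of $\Sp_{2n}(q)$, whose orders are tabulated in \cite[Tab.~3.5.C]{KL}, and retain precisely those of order at least $q^{n^2}$. A direct order comparison (maximal parabolic subgroups have order divisible by $q^{n(2n+1)/2}$, stabilisers of orthogonal decompositions are bounded by $|\Sp_{2d}(q)|\cdot|\Sp_{2n-2d}(q)|$, stabilisers of a pair of totally singular subspaces give the $\GL_n(q).2$ entry, etc.) will rule out the classes $\mathcal{C}_3$ with large imprimitivity index, $\mathcal{C}_6$ (extraspecial normalisers), as well as class $\mathcal{S}$ subgroups for $n\ge 6$, leaving only the families (1)--(6).

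For the remaining small-rank cases $n\in\{3,4,5\}$, Liebeck's threshold $q^{6n}$ exceeds $q^{n^2}$, so subgroups in Aschbacher's class $\mathcal{S}$ (and additional geometric ones) may enter the picture. Here I would consult the full classification of maximal subgroups of low-dimensional classical groups in \cite[Tab.~8.28--8.71]{BHR}. For $\PSp_6(q)$ this yields the additional entries $\Sp_2(q^3).3$ and $\Sp_2(q)\wr\fS_3$ from class $\mathcal{C}_3$, together with $G_2(q)$ from class $\mathcal{S}$ when $q$ is even (for $q$ odd the corresponding $G_2(q)$ lies inside an orthogonal subgroup, which is not maximal). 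For $n=4,5$ the tables confirm that no generic class-$\mathcal{S}$ subgroup meets the bound.

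The main obstacle is the bookkeeping in the small-rank cases: one has to verify order-by-order that each non-geometric candidate in \cite{BHR} either does not meet the bound $q^{n^2}$ or is already accounted for by a geometric overgroup, and simultaneously check the boundary cases arising for very small $q$ (namely $\PSp_6(5)$, $\PSp_6(9)$, $\PSp_8(2)$ and $\PSp_{12}(2)$), where the \emph{ad hoc} embeddings $2.J_2\hookrightarrow\PSp_6(5),\PSp_6(9)$ and $\fS_{10}\hookrightarrow\PSp_8(2)$, $\fS_{14}\hookrightarrow\PSp_{12}(2)$ happen to satisfy $|M|\ge q^{n^2}$ and so must be recorded in item~(9). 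All of this is a finite check against the tables.
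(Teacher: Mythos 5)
Your proposal follows essentially the same route as the paper: invoke Liebeck's classification of maximal subgroups of order at least $q^{6n}$, identify the geometric classes via \cite[Tab.~3.5.C]{KL}, note that $q^{n^2}\ge q^{6n}$ for $n\ge6$, and fall back on the tables of \cite{BHR} for $n\le5$ to pick up the extra cases in items (7)--(9). (Only a cosmetic slip: a maximal parabolic of $\Sp_{2n}(q)$ has order divisible by $q^{n^2}$, the order of a Sylow $p$-subgroup, not $q^{n(2n+1)/2}$; this does not affect the argument.)
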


\begin{proof}
The maximal subgroups of $\Sp_{2n}(q)$ of order at least~$q^{6n}$ are described
in \cite{Lie85}. More details about these can be found in
\cite[Tab.~3.5.C]{KL}, and we arrive at~(1)--(7). For $n\ge6$ we always have
$q^{n^2}\ge q^{6n}$. For $n\le5$ the relevant subgroups can be read off
from \cite[Tab.~8.28--8.65]{BHR}.
\end{proof}

In the proof of the following result we make use of the description of
blocks and Brauer trees in classical groups by Fong and Srinivasan
\cite{FS89,FS90}.
 
\begin{thm}   \label{thm:PSp}
 Let $G=\PSp_{2n}(q)$ with $n\ge3$ and $p$ a prime dividing $|G|_{q'}$. Then $G$
 satisfies $(I_p)$ with respect to some $p'$-subgroup $H$ if and only if one of
 \begin{enumerate}
  \item[\rm(1)] $q=2$, $e_p(2)=2n$, $H=\GO_{2n}^+(2)$,
   $\dim\Phi_{1_G}=2^{n-1}(2^n+1)$; or
  \item[\rm(2)] $q=2$, $e_p(2)=n$ is odd, $H=\GO_{2n}^-(2)$,
   $\dim\Phi_{1_G}=2^{n-1}(2^n-1)$.
 \end{enumerate}
\end{thm}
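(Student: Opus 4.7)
The plan is to follow the strategy of the unitary case (Theorem~\ref{thm:PSU}). Set $e := e_p(q)$ and define $e' := e$ when $e$ is even and $e' := 2e$ when $e$ is odd, so that by Fong--Srinivasan \cite{FS89,FS90} the Sylow $p$-subgroups of $G$ are cyclic exactly when $e' > n$, and unipotent characters are distributed into $p$-blocks according to their $e'$-cores.

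I first treat the case $e' > n$ of cyclic Sylow $p$-subgroups. By \cite{FS90} the trivial character is not connected to the exceptional node on the Brauer tree of the principal $p$-block, so Lemmas~\ref{lem:2-trans} and~\ref{lem:cyc} together show that $G$ satisfies $(I_p)$ with respect to $H$ if and only if $G$ acts $2$-transitively on the cosets of $H$. By the classification of $2$-transitive actions of almost simple groups \cite{C81}, for $n \ge 3$ the only such actions of $\PSp_{2n}(q)$ arise from $\Sp_{2n}(2)$ on the $2^{n-1}(2^n \pm 1)$ cosets of $\GO^\pm_{2n}(2)$. Inspecting the order formulas, $\GO^\pm_{2n}(2)$ is a $p'$-group with $p \mid |G|$ precisely when $e_p(2) = 2n$ (giving case~(1) with $H = \GO^+_{2n}(2)$) or $e_p(2) = n$ with $n$ odd (giving case~(2) with $H = \GO^-_{2n}(2)$), exhausting the conclusion.

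Now assume $e' \le n$, so Sylow $p$-subgroups are non-cyclic. Since $|G|_r = q^{n^2}$ for the defining characteristic $r \mid q$, Lemma~\ref{lem:order} forces $|H| \ge q^{n^2}$, so $H$ lies in some maximal subgroup $M$ listed in Proposition~\ref{prop:large Sp_n}. If $M = P$ is a maximal parabolic, its Levi decomposition has derived Levi subquotient a central quotient of $\Sp_{2(n-d)}(q) \times \SL_d(q)$ for some $1 \le d \le n$, and both $\PSp_{2(n-d)}(q)$ and $\PSL_d(q)$ satisfy $(I_p)$ by Lemma~\ref{lem:normal} and Corollary~\ref{cor:compfac}. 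Combining Corollary~\ref{cor:psl} with Propositions~\ref{prop:L2} and~\ref{prop:Sp4} and induction on~$n$, the admissible configurations of $(n,d,e)$ are severely restricted. In each remaining one, I use the Howlett--Lehrer decomposition of $\Ind_P^G(1_P)$ through the Weyl group $W(C_n) \cong W(B_n)$ together with Lemma~\ref{lem:const Sp} to exhibit a unipotent constituent whose labelling bi-partition (equivalently, symbol) has $e'$-core distinct from that of the trivial unipotent character. By \cite{FS89} such a constituent lies outside the principal $p$-block, contradicting $H \le P$.

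For the non-parabolic subgroups in Proposition~\ref{prop:large Sp_n}(2)--(9), I argue as in the generic part of Theorem~\ref{thm:PSU}: each is divisible by~$p$, and its largest $p'$-subgroup has order strictly less than $q^{n^2}$, again contradicting Lemma~\ref{lem:order}; the small-rank sporadic examples in~(7)--(9) are excluded individually using \GAP\ \cite{GAP}. The main obstacle will be the parabolic step: for each residue class of $e$ modulo~$4$ and each admissible $d$, I must pinpoint a specific bi-partition of total size~$n$ labelling a unipotent constituent of $\Ind_P^G(1_P)$ whose $e'$-core differs from that of $(n)$, and this bookkeeping needs particular care in the case $q = 2$, where the cyclic-Sylow branch already produces the examples in~(1) and~(2) and the $e' \le n$ branch must be shown to yield none.
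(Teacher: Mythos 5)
Your overall architecture is the same as the paper's: split on whether $e'>n$ (cyclic Sylow, handled by Cameron's classification together with Lemmas~\ref{lem:2-trans} and~\ref{lem:cyc} and the Fong--Srinivasan Brauer trees, which correctly yields exactly cases (1) and (2)), and for $e'\le n$ reduce via Lemma~\ref{lem:order} to the overgroups of Proposition~\ref{prop:large Sp_n}, treating parabolics by induction on the Levi factors and the rest by order comparison. The cyclic branch and the non-parabolic branch are fine as you describe them (modulo the small point that the groups in items (2)--(8) of Proposition~\ref{prop:large Sp_n} are dispatched generically and only item (9) needs individual attention, where the paper uses Theorems~\ref{thm:alt} and~\ref{thm:spor} plus an order count for $2.J_2<\PSp_6(9)$ rather than \GAP).

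There is, however, a genuine gap in your parabolic step. Your proposed uniform tool --- exhibit a unipotent constituent of $\Ind_P^G(1_P)$ whose $e'$-core differs from that of $(n)$, hence lying outside the principal $p$-block --- cannot close all the surviving configurations. The exceptional branches of Corollary~\ref{cor:psl} and Proposition~\ref{prop:Sp4} admit cases with $e=1$ or $e=2$ and $p\in\{2,3\}$ (e.g.\ $d=2$, $2n-2d\in\{2,4\}$; or $d=p=3$, $q=4$), and precisely there the block-theoretic criterion degenerates: for $p=2$ there is a unique unipotent $2$-block, and for small $e$ essentially every unipotent character has the same $e'$-core as the trivial one, so no constituent of the permutation character lies outside the principal block. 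The paper is forced to switch tools in these cases: it invokes White's explicit decomposition matrices for $\Sp_6(2^a)$ \cite{Wh00} to see that specific unipotent characters such as $(3;-)$ or $(-;2,1)$ are \emph{not constituents of the 1-PIM} (a strictly finer statement than not lying in the principal block), and for $\PSp_6(q)$ and $\PSp_8(q)$ at $p=2$ it compares the Harish-Chandra induced 1-PIMs from two different parabolic subgroups to show one is not contained in the other. You acknowledge that the bookkeeping "needs particular care," but what is actually needed is a different argument, not more careful core computations; without it the cases $p\le 3$, $e\le 2$ remain open in your proof.
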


\begin{proof}
We argue by induction over $n$. Let $e:=e_p(q)$. We set $e':=2e$ is $e$ is odd,
and $e':=e$ otherwise. The order of $G$ is coprime to $p$ if $e'>2n$. The Sylow
$p$-subgroups of $G$ are cyclic if $n<e'\le 2n$. The only 2-transitive actions
of $G$ are, by \cite{C81}, the ones given in~(1) and~(2) with $q=2$, and they
lead to examples by Lemma~\ref{lem:2-trans}. As the trivial character is never
connected to the exceptional node on the corresponding Brauer tree \cite{FS90},
there are no further examples in the cyclic Sylow case by Lemma~\ref{lem:cyc}.
\par
Now assume that $e'\le n$. We discuss the various maximal overgroups $M$ of
$p'$-subgroups $H$ of $G$. Note that a maximal unipotent subgroup of $G$ has
size $q^{n^2}$ so it suffices to consider the groups occurring in
Proposition~\ref{prop:large Sp_n}. Let $P$ be a maximal parabolic subgroup of
$G$. Then $G$ has a Levi factor with subquotient
$\bar L=\PSp_{2n-2d}(q)\times\PSL_d(q)$, for some $1\le d\le n$. By
Corollary~\ref{cor:compfac}, both factors must satisfy $(I_p)$. Hence, by
induction, Corollary~\ref{cor:psl} applied to
the $\PSL_d(q)$-factor imposes that $d\le e$, or $e=1$ and either $d=2$, or
$d=p=3$, $q=4$, and similarly, by Proposition~\ref{prop:Sp4}, the
$\PSp_{2n-2d}(q)$-factor forces one of
\begin{itemize}
\item $2n-2d<e'$,
\item $2n-2d=e'$, $q=2$,
\item $2n-2d=2$, or
\item $2n-2d=4$, $p=2$.
\end{itemize}
First assume $2(n-d)\le e'$. The constituents of
the permutation character on $P$ are described in Lemma~\ref{lem:const Sp}. For
$d\le e$, if $e$ and $n$ are both odd, the constituent labelled by the
bi-partition
$((n-2+e)/2,(n-e)/2;1)$ does not lie in the principal $p$-block, if $e$ is
odd and $n$ is even, the one labelled $((n-1+e)/2,(n+1-e)/2;-)$ is not in the
principal block; if $e$ is even, then $e\le n$, and the constituent labelled
$(n-e/2;e/2)$ is outside the principal block (\cite{FS89}). On the other
hand, if
$d=p=3$, $q=4$, then $G=\PSp_6(4)$, and $P$ has type $\GL_3(4)$. Here,
Harish-Chandra induction of the trivial character contains the unipotent
character labelled $(3;-)$, while this is not a constituent of the 1-PIM by
\cite[Thm~2.1]{Wh00}.

Next, assume $2n-2d=e'$, $q=2$ and $d=2$, $e=1$. Then $G=\PSp_6(2)$, which
can be discarded using \GAP.

Now consider the case that $2n-2d=2$, so $n=d+1$. If $d\le e$, then
$e'\le n=d+1\le e+1$, so $e=e'$ is even and equal to $n-1$ or $n$. Here the
permutation character on $P$ of type $\Sp_2(q)\GL_{n-1}(q)$  contains
the unipotent characters labelled $(n-1;1)$ and $(n-3;3)$ which do not
lie in the principal $p$-block when $e=n$, $e=n-1$ respectively.
If $d=2$, $e=1$, then by Proposition~\ref{prop:not parabolic} this can only
give rise to an example if $p\le3$. For $p=3$ the Harish-Chandra induction of
the trivial character from $P$ of type $\Sp_2(q)\GL_2(q)$ contains the
unipotent character labelled $(3;-)$, while this is not a constituent of the
1-PIM by \cite[Thm~2.1]{Wh00}. When $p=2$ the 1-PIM of each simple factor of
$\bar L$
contains the Steinberg character, and so its Harish-Chandra induction to $G$
contains the unipotent character labelled $(-;2,1)$ not lying in the principal
block.
Next, if $d=3$, $p=3$, $q=4$, then $G=\PSp_8(4)$. Here, the Harish-Chandra
induced 1-PIM from a maximal parabolic subgroup $P$ of type $\Sp_6(q)$ (given
in \cite[Thm~2.1]{Wh00}) does not contain the unipotent character labelled
$(3;1)$, but the permutation character on $P$ does, so $H$ cannot be contained
in~$P$.

Finally, consider $2n-2d=4$, $p=2$ and $e=1$, so $n=d+2$. If $d\le e=1$ then
$n=3$ and $G=\PSp_6(q)$. Here the induced 1-PIM from the parabolic subgroup
$P$ of type $\Sp_4(q)$ is not contained in the induced 1-PIM from the parabolic
subgroup of type $\GL_3(q)$, so $H$ cannot lie inside $P$. The last remaining
case is now when $d=2$, so $n=4$, $G=\PSp_8(q)$. Here again the induced 1-PIM
from the parabolic subgroup $P$ of type $\Sp_4(q)\GL_2(q)$ is not
contained in that from the parabolic subgroup of type $\GL_4(q)$.
This completes the discussion of maximal parabolic subgroups.
\par
Since $e'\le n$, the groups in (2)--(8) of Proposition~\ref{prop:large Sp_n}
have order divisible by~$p$. Their largest $p'$-subgroups are of order less
than $q^{n^2}$, so they cannot contain a relevant subgroup~$H$. 
\par
As far as the groups in (9) of Proposition~\ref{prop:large Sp_n} are
concerned, $\fS_{10}$ and $\fS_{14}$ do not satisfy $(I_p)$ for any prime
divisor of their order by Theorem~\ref{thm:alt}, and the Sylow $p$-subgroups of
$G$ are cyclic for all other primes. The group $2.J_2$ has property $(I_p)$
only for $p=5$, by Theorem~\ref{thm:spor}, so only $G=\PSp_6(9)$ needs to be
considered. But here a
Borel subgroup of $G$ has larger order than any $5'$-subgroup of~$2.J_2$.
\end{proof}

%%%%%%%%%%%%%%%%%%%%%%%%%%%%%%%%%%%%
\subsection{The orthogonal groups}

\begin{prop}   \label{prop:large SO_2n+1}
 Let $M$ be a maximal subgroup of $\OO_{2n+1}(q)$, $n\ge3$ and $q$ odd, of
 order at least $q^{n^2}$. Then $M$ is the intersection with $\OO_{2n+1}(q)$
 of one of the following subgroups of $\GO_{2n+1}(q)$:
 \begin{enumerate}
  \item[\rm(1)] a maximal parabolic subgroup;
  \item[\rm(2)] $\SO_{2d+1}(q)\times\GO_{2n-2d}^\pm(q)$ with $0\le d< n$;
  \item[\rm(3)] $\SO_{2n+1}(\sqrt{q}).2$ if $q$ is a square;
  \item[\rm(4)] $G_2(q)$ when $n=3$; or
  \item[\rm(5)] $2^6.\fA_7$, $\fS_9$ or $\PSp_6(2)$ in $\OO_7(3)$;
   or $2^8.\fA_9$ in $\OO_9(3)$.
 \end{enumerate}
\end{prop}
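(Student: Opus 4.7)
The plan is to argue exactly as in the parallel Propositions~\ref{prop:large SLn}, \ref{prop:large SUn}, and~\ref{prop:large Sp_n}. The natural module for $\OO_{2n+1}(q)$ has dimension $m=2n+1$, and a Sylow $r$-subgroup (for $r\mid q$) has order $q^{n^2}$, which matches the claimed threshold. I would invoke Liebeck's main theorem from \cite{Lie85}, which enumerates maximal subgroups of a classical group on an $m$-dimensional module whose order is at least $q^{3m}=q^{6n+3}$, together with the explicit geometric list in \cite[Tab.~3.5.D]{KL}. For $n\ge7$ we have $n^2\ge 6n+3$, so $q^{n^2}\ge q^{3m}$ and Liebeck's classification applies in full.

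Running through Aschbacher's classes for $\OO_{2n+1}(q)$, the only geometric families whose orders can reach $q^{n^2}$ are: the maximal parabolic subgroups (class $\cE_1$, case (1)), the stabilisers of non-degenerate subspaces $\SO_{2d+1}(q)\times\GO_{2n-2d}^\pm(q)$ (class $\cE_1$, case (2)), and the subfield subgroup $\SO_{2n+1}(\sqrt q).2$ when $q$ is a square (class $\cE_5$, case (3)). Imprimitive and field-extension classes, as well as the tensor and classical subgroup classes, all give subgroups of order strictly less than $q^{n^2}$ in odd dimension (a direct order computation from \cite[Tab.~3.5.D]{KL}). The only $\cE_9$ (``S-class'') example with order exceeding $q^{n^2}$ in the generic range is $G_2(q)\le\Omega_7(q)$, yielding case (4).

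For the remaining range $n\le6$, I would consult the explicit maximal subgroup lists in \cite[Tab.~8.39--8.59]{BHR} for $\OO_7(q)$, $\OO_9(q)$, $\OO_{11}(q)$, $\OO_{13}(q)$. A direct inspection there shows that the only further maximal subgroups whose order attains the bound $q^{n^2}$ occur at very small $q$: in $\OO_7(3)$ one encounters $2^6.\fA_7$, $\fS_9$ (arising from $2.\fA_9\le\Omega_7(3)$) and $\PSp_6(2)$, and in $\OO_9(3)$ one finds $2^8.\fA_9$. This produces item~(5) and finishes the list.

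The argument is essentially a bookkeeping exercise in comparing known orders, with no substantive obstacle; the only mild point requiring attention is verifying that no $\cE_2$, $\cE_3$ or $\cE_7$ example in low rank exceeds the $q^{n^2}$ threshold, which is handled by the explicit BHR tables.
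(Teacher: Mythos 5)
Your overall strategy is the same as the paper's: combine Liebeck's order bound for maximal subgroups with the explicit geometric descriptions in \cite[Tab.~3.5.D]{KL}, and fall back on the tables of \cite{BHR} in low rank. There is, however, one concrete gap in your version. You invoke Liebeck's generic threshold $q^{3m}=q^{6n+3}$ for the $m=(2n+1)$-dimensional module, which is below $q^{n^2}$ only for $n\ge7$, and you then propose to settle $n\le6$ by inspecting the BHR tables for $\OO_7(q),\dots,\OO_{13}(q)$. But \cite{BHR} only treats classical groups of dimension at most $12$, so there are no such tables for $\OO_{13}(q)$; the case $n=6$ is left uncovered by your sources. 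The paper avoids this by citing the sharper statement \cite[Thm~4.2]{Lie85}, which identifies the maximal subgroups of $\OO_{2n+1}(q)$ of order at least $q^{4n+6}$; since $n^2\ge 4n+6$ already for $n\ge6$, only $n\le5$ (dimensions $7$, $9$, $11$) remains for BHR. To repair your argument you should either quote this refined bound, or supply a separate treatment of the Aschbacher classes in dimension $13$.

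Apart from this, your bookkeeping matches the paper's conclusion: the reducible stabilisers give items (1) and (2), the subfield subgroup gives (3), $G_2(q)\le\Omega_7(q)$ is the one generic $\mathcal{S}$-class example giving (4), and the sporadic low-rank, small-$q$ cases in $\OO_7(3)$ and $\OO_9(3)$ give (5).
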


\begin{proof}
This follows again from \cite[Thm~4.2]{Lie85}, where maximal subgroups of
order at least $q^{4n+6}$ are identified, in conjunction with the explicit
descriptions of the generic subgroups in \cite[Tab.~3.5.D]{KL} and the complete
lists of maximal subgroups in \cite{BHR} for $n\le5$.
\end{proof}

\begin{thm}   \label{thm:PSO 2n+1}
 Let $G=\OO_{2n+1}(q)$ with $n\ge3$ and $q$ odd. Then $G$ does not satisfy
 $(I_p)$ for any prime $p$ dividing~$|G|_{q'}$.
\end{thm}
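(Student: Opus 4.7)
The plan is to mirror the argument used for $\PSp_{2n}(q)$ in Theorem~\ref{thm:PSp}, proceeding by induction on~$n$. Set $e:=e_p(q)$ and put $e':=2e$ if $e$ is odd, $e':=e$ otherwise. Since $\OO_{2n+1}(q)\cong\PSp_{2n}(q)$ as abstract groups when $q$ is odd (exceptional isomorphism of types $B_n$ and $C_n$), the statement could in principle be deduced directly from Theorem~\ref{thm:PSp}, whose listed examples all require $q=2$. I will nevertheless run the argument in the style of the symplectic proof, so that the parabolic analysis uses the subgroup structure arising from the orthogonal realisation and the result is self-contained within Section~\ref{sec:nondef}.

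If $e'>n$, then the Sylow $p$-subgroups of $G$ are cyclic. By \cite{C81} there is no $2$-transitive permutation representation of $G$ for $q$ odd, and by \cite{FS90} the trivial character is never adjacent to the exceptional node on the principal $p$-Brauer tree. Lemma~\ref{lem:cyc} therefore eliminates this range.

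Assume now $e'\le n$. A Sylow $r$-subgroup of $G$ (for $r\mid q$) has order $q^{n^2}$, so by Lemma~\ref{lem:order} any admissible $p'$-subgroup $H$ must lie in one of the subgroups $M$ listed in Proposition~\ref{prop:large SO_2n+1}. For a maximal parabolic $P$ of $G$, a Levi complement has derived subquotient of the form $\OO_{2(n-d)+1}(q)\times\PSL_d(q)$ with $1\le d\le n$. By Corollary~\ref{cor:compfac} both factors must satisfy $(I_p)$. Applying the induction hypothesis, together with Propositions~\ref{prop:L2} and~\ref{prop:Sp4} in the boundary cases $n-d\in\{1,2\}$, and Corollary~\ref{cor:psl} for the linear factor, the list of admissible $(n,d,e)$-configurations collapses to a short enumeration. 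For each such configuration I would exhibit, via Lemma~\ref{lem:const Sp} applied in type $B_n$ and the block distribution of \cite{FS89}, a unipotent constituent of $\Ind_P^G(1_P)$ that lies outside the principal $p$-block, thereby ruling out~$P$.

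For the non-parabolic maximal subgroups listed in Proposition~\ref{prop:large SO_2n+1}(2)--(4), a direct order estimate shows that their largest $p'$-subgroups have order strictly less than $q^{n^2}$, so none can contain $H$ by Lemma~\ref{lem:order}. The sporadic exceptions in $\OO_7(3)$ and $\OO_9(3)$ from Proposition~\ref{prop:large SO_2n+1}(5) are disposed of using \GAP. The main technical obstacle I anticipate lies in the boundary configurations of the parabolic case --- small values of $e$ combined with $n-d\le 2$ or $d\le 3$ --- where the inductive conclusions on the Levi factors carry the least information; there one presumably has to appeal to explicit decomposition data, in the manner of the calls to \cite{Wh00} in the symplectic argument.
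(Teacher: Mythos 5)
Your plan follows the paper's route almost exactly: handle the cyclic Sylow case via Lemma~\ref{lem:cyc} together with \cite{C81} and \cite{FS90}, then note that any admissible $H$ must lie in a subgroup from Proposition~\ref{prop:large SO_2n+1}, rule out parabolics by running the type-$C_n$ Harish-Chandra/block argument of Theorem~\ref{thm:PSp} (the paper does this by observing that types $B_n$ and $C_n$ have the same Weyl group, hence the same decomposition of Harish-Chandra induction), dismiss the non-parabolic generic subgroups by order estimates, and treat the exceptions in $\OO_7(3)$ and $\OO_9(3)$ separately (the paper uses Theorem~\ref{thm:alt} plus cyclic Sylows for $\fA_9$ rather than \GAP, a trivial difference).

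There is, however, one genuinely false assertion you should delete: $\OO_{2n+1}(q)$ and $\PSp_{2n}(q)$ are \emph{not} isomorphic as abstract groups for $q$ odd and $n\ge 3$. They have the same order, but this is one of the classical families of non-isomorphic simple groups of equal order; the exceptional isomorphism $B_n\cong C_n$ holds only for $n\le 2$ (or for $q$ even, which is excluded here). Consequently the shortcut ``deduce the statement directly from Theorem~\ref{thm:PSp}'' does not exist, and this is precisely why the paper states and proves a separate theorem. What does transfer from the symplectic case is only the Weyl-group-level combinatorics of Harish-Chandra induction and the Fong--Srinivasan block data; the subgroup structure is genuinely different, which is why Proposition~\ref{prop:large SO_2n+1} (with its own exceptional subgroups such as $\fS_9$, $\PSp_6(2)$ in $\OO_7(3)$ and $2^8.\fA_9$ in $\OO_9(3)$) must be invoked in place of Proposition~\ref{prop:large Sp_n}. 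Since you in fact carry out the argument on the orthogonal side and never use the purported isomorphism, the rest of your plan is sound; just strike the parenthetical claim.
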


\begin{proof}
We again proceed by induction on $n$. The group $G$ does not have 2-transitive
permutation actions by \cite{C81}, so using \cite{FS90} no examples arise for
cyclic Sylow
$p$-subgroups. So setting $e=e_p(q)$ and $e'=2e/\gcd(2,e)$ we have $e'\le n/2$.
To see that a putative $p'$-subgroup $H$ can not lie inside a maximal parabolic
subgroup of $G$ we can argue as in the proof of Theorem~\ref{thm:PSp}, using
that groups of types $B_n$ and $C_n$ have the same Weyl group and hence the
same decomposition of Harish-Chandra induction.
\par
Again the remaining possibilities listed in Proposition~\ref{prop:large
SO_2n+1}, except for those in item~(5), have order divisible by $p$ and do not
contain large enough $p'$-subgroups. The group $\fA_9$ does not have property
$(I_p)$ for $p\le7$ by Theorem~\ref{thm:alt}, and Sylow $p$-subgroups of
$G=\OO_9(3)$ are cyclic for all larger primes. The case of $G=\OO_7(3)$ can be
discarded using the known decomposition matrices in \GAP.
\end{proof}

In the following we set $\GO_{2m+1}^0(q):=\GO_{2m+1}(q)$. The next two
results are proved in a very similar way to the earlier results of this
type, using \cite{Lie85}, \cite{KL} and \cite{BHR}:

\begin{prop}   \label{prop:large SO_2n+}
 Let $M$ be a maximal subgroup of $\OO_{2n}^+(q)$, $n\ge4$, of order at least
 $q^{n^2-n}$. Then $M$ is the intersection with $\OO_{2n}^+(q)$ of the image in
 $\PGL_{2n}(q)$ of one of the following subgroups of $\GO_{2n}^+(q)$:
 \begin{enumerate}
  \item[\rm(1)] a maximal parabolic subgroup;
  \item[\rm(2)] $\GO_d^\eps(q)\times\GO_{2n-d}^\eps(q)$ with $0<d<2n$,
   $\eps\in\{0,\pm\}$;
  \item[\rm(3)] $\Sp_{2n-2}(q)$ when $q$ is even;
  \item[\rm(4)] $\GO_n^\pm(q)\wr\fS_2$ or $\GO_n^+(q^2)$ when $n$ is even;
  \item[\rm(5)] $\GO_n(q)\wr\fS_2$ or $\GO_n(q^2)$ when $nq$ is odd;  \item[\rm(6)] $\GL_n(q).2$ or $\GU_n(q)$ when $n$ is even;
  \item[\rm(7)] $\GO_{2n}^\pm(\sqrt{q})$ when $q$ is a square; or
  \item[\rm(8)] $\fA_9$ in $\OO_8^+(2)$; $\OO_8^+(2)$ in $\OO_8^+(3)$; or
   $\fA_{16}$ in $\OO_{14}^+(2)$.
 \end{enumerate}
\end{prop}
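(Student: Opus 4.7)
The plan is to follow exactly the template used in Propositions~\ref{prop:large SLn}, \ref{prop:large SUn}, \ref{prop:large Sp_n}, and~\ref{prop:large SO_2n+1}, combining three standard inputs: Liebeck's classification~\cite{Lie85} of maximal subgroups of finite classical groups of large order, the Aschbacher-class description of the generic maximal subgroups collected in \cite[Tab.~3.5.E]{KL}, and the complete maximal-subgroup tables in~\cite{BHR} for the low-rank cases.

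First I would invoke the main theorem of~\cite{Lie85}, which for type $D_n$ gives an explicit threshold, polynomial in $q$ of degree linear in $n$, above which every maximal subgroup of $\OO_{2n}^+(q)$ falls into a short explicit list. Since our hypothesis requires order at least $q^{n^2-n}$, a quantity quadratic in $n$, for all $n$ beyond some small bound Liebeck's list applies in full. Reading the generic entries of that list through \cite[Tab.~3.5.E]{KL} and retaining only those whose order is at least $q^{n^2-n}$ then produces precisely the classes~(1)--(7): maximal parabolics, orthogonal sum stabilisers, the symplectic subgroup in even characteristic, the wreath and field-extension subgroups (for $n$ even, and for $nq$ odd), the $\GL_n$- and $\GU_n$-polarisation stabilisers when $n$ is even, and the subfield subgroup when $q$ is a square. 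Checking each case reduces to routine order comparisons using the standard formulas.

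For the small ranks $n=4,5$ the threshold in Liebeck's theorem is too weak to discard all exotic almost-simple candidates, and I would instead consult~\cite{BHR} directly, filtering the complete lists there by the inequality $|M|\ge q^{n^2-n}$. This should leave only the two cases in~(8) with $n=4$, namely $\fA_9<\OO_8^+(2)$ and $\OO_8^+(2)<\OO_8^+(3)$. The remaining exception $\fA_{16}<\OO_{14}^+(2)$ with $n=7$ lies outside the range of~\cite{BHR} and must instead be extracted from the almost-simple exceptional part of Liebeck's statement, where one verifies that $|\fA_{16}|$ does exceed $2^{42}=q^{n^2-n}$. The main obstacle I anticipate is the $n=4$ case: because of the triality automorphism of $D_4$ the maximal subgroup structure is richer, with some geometric classes fusing and additional almost-simple classes appearing, so one has to be careful to confirm that every subgroup of order at least $q^{12}$ is captured either in (1)--(7) or in~(8).
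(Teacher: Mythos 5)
Your overall strategy is exactly the paper's: the authors state that this proposition ``is proved in a very similar way to the earlier results of this type, using \cite{Lie85}, \cite{KL} and \cite{BHR}'', i.e.\ Liebeck's order threshold for the generic list, the Kleidman--Liebeck tables for the explicit geometric subgroups, and the complete low-rank tables of \cite{BHR}. Your observation that the $\fA_{16}<\OO_{14}^+(2)$ entry must come from the almost-simple exceptional part of Liebeck's theorem (the fully deleted permutation module) rather than from \cite{BHR}, together with the order check $|\fA_{16}|>2^{42}$, is also correct.

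There is, however, one quantitative slip that leaves a genuine hole. Liebeck's theorem covers maximal subgroups of order at least $q^{3m}$, where $m=2n$ is the dimension of the natural module, i.e.\ order at least $q^{6n}$. The hypothesis $|M|\ge q^{n^2-n}$ reaches this threshold only when $n^2-n\ge 6n$, that is for $n\ge7$, not for $n\ge6$. So besides $n=4,5$ you must also run the \cite{BHR} check for $n=6$: as written, your argument does not account for maximal subgroups of $\OO_{12}^+(q)$ with order between $q^{30}$ and $q^{36}$. Since dimension $12$ is still within the range of \cite{BHR} this is routine to repair, and for plus type it produces no new exceptional entries; but the omission is not harmless in principle --- the analogous $n=6$ check in minus type is precisely what yields the exception $\fA_{13}<\OO_{12}^-(2)$ in Proposition~\ref{prop:large SO_2n-}.
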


\begin{prop}   \label{prop:large SO_2n-}
 Let $M$ be a maximal subgroup of $\OO_{2n}^-(q)$, $n\ge4$, of
 order at least $q^{n^2-n}$. Then $M$ is the intersection with $\OO_{2n}^-(q)$
 of the image in $\PGL_{2n}(q)$ of one of the following subgroups of
 $\GO_{2n}^-(q)$:
 \begin{enumerate}
  \item[\rm(1)] a maximal parabolic subgroup;
  \item[\rm(2)] $\GO_d^\eps(q)\times\GO_{2n-d}^{-\eps}(q)$ with $0<d<2n$,
   $\eps\in\{0,\pm\}$;
  \item[\rm(3)] $\Sp_{2n-2}(q)$ when $q$ is even;
  \item[\rm(4)] $\GO_n^-(q^2)$ when $n$ is even;
  \item[\rm(5)] $\GO_n(q)\wr\fS_2$ or $\GO_n(q^2)$ when $nq$ is odd;
  \item[\rm(6)] $\GU_n(q)$ when $n$ is odd; or
  \item[\rm(7)] $\fA_{12}$ in $\OO_{10}^-(2)$; or $\fA_{13}$ in $\OO_{12}^-(2)$.
 \end{enumerate}
\end{prop}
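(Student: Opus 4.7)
The plan is to follow exactly the same strategy used in the proofs of Propositions~\ref{prop:large SUn}, \ref{prop:large Sp_n}, \ref{prop:large SO_2n+1} and \ref{prop:large SO_2n+}, invoking \cite[Thm~4.2]{Lie85}, which classifies the maximal subgroups of a classical group of order at least $q^{cn}$ for some explicit small linear bound. Since $n^2-n\ge cn$ as soon as $n$ is larger than a small explicit integer, for those $n$ every maximal subgroup $M$ of $\OO^-_{2n}(q)$ with $|M|\ge q^{n^2-n}$ already occurs in Liebeck's list.

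The next step is to match the Aschbacher-class entries in Liebeck's list against the explicit subgroup descriptions in items~(1)--(6) by reading the row for $\OO^-_{2n}(q)$ in \cite[Tab.~3.5.F]{KL}. The reducible subgroups yield the parabolics in~(1) and the non-degenerate subspace stabilisers in~(2); in even characteristic, the embedding $\Sp_{2n-2}(q)<\OO^-_{2n}(q)$ produces~(3); the imprimitive class and the field-extension class account for~(4) and~(5) respectively; and the embedding $\GU_n(q)<\GO^-_{2n}(q)$ (available exactly when $n$ is odd, owing to the sign of the associated Hermitian form) accounts for~(6). For each remaining Aschbacher family I would verify that its order falls below $q^{n^2-n}$ and hence is correctly omitted from the conclusion; this is what rules out, for instance, the subfield subgroups $\GO^-_{2n}(\sqrt q)$ and the various tensor-product subgroups.

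Finally, for the finitely many small values of $n$, essentially $n\le5$, where $q^{n^2-n}$ no longer exceeds Liebeck's threshold, I would instead scan the complete tables of maximal subgroups of the low-dimensional classical groups in \cite{BHR}. Every such subgroup of order at least $q^{n^2-n}$ either falls into one of the generic families~(1)--(6) or produces one of the two sporadic almost simple examples in~(7), namely $\fA_{12}<\OO^-_{10}(2)$ and $\fA_{13}<\OO^-_{12}(2)$. The main obstacle is purely bookkeeping: keeping track of the precise order of each Aschbacher-class candidate and correctly translating the generic structural descriptions of \cite{KL} into the concrete form used here. No new conceptual ingredient beyond those already used for the preceding propositions is required.
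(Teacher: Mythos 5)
Your proposal matches the paper's treatment exactly: the authors prove this proposition (together with Proposition~\ref{prop:large SO_2n+}) by the same recipe as Propositions~\ref{prop:large SLn}--\ref{prop:large SO_2n+1}, namely invoking \cite[Thm~4.2]{Lie85} for the generic bound, translating the Aschbacher classes via the relevant table of \cite{KL}, and consulting \cite{BHR} for the small values of $n$ (which is also where the two sporadic cases in item~(7) arise). No substantive difference from the paper's argument.
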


\begin{thm}   \label{thm:PSO 2n}
 Let $G=\OO_{2n}^\pm(q)$ with $n\ge4$. Then $G$ does not satisfy $(I_p)$ for
 any prime $p$ dividing~$|G|_{q'}$.
\end{thm}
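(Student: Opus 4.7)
The plan is to mirror closely the inductive arguments in the proofs of Theorems~\ref{thm:PSp} and~\ref{thm:PSO 2n+1}, working by induction on~$n$ and on the pair $(n,\eps)$ with $\eps\in\{+,-\}$. Set $e:=e_p(q)$ and $e':=2e/\gcd(2,e)$. Since $\OO_{2n}^\pm(q)$ with $n\ge 4$ has no non-trivial $2$-transitive permutation representation by \cite{C81}, and the trivial character is never connected to the exceptional node on the Brauer trees of \cite{FS90}, Lemma~\ref{lem:cyc} eliminates every case in which a Sylow $p$-subgroup of $G$ is cyclic. So we may assume $e'\le n$. A Sylow $r$-subgroup of $G$ (for $r\mid q$) has order $q^{n(n-1)}$, so Lemma~\ref{lem:order} forces any putative $p'$-subgroup $H$ to lie in one of the maximal subgroups $M$ described in Proposition~\ref{prop:large SO_2n+} or~\ref{prop:large SO_2n-}.

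The parabolic case is the main part. A maximal parabolic $P\le G$ has a Levi subgroup whose derived factor modulo the centre is of the form $\PSL_d(q)\times\Omega_{2n-2d}^{\pm}(q)$ (or the end-node parabolic of type $\GL_n(q)$, treated separately). By Lemma~\ref{lem:normal} and Corollary~\ref{cor:compfac} both factors must satisfy $(I_p)$, so Corollary~\ref{cor:psl}, Proposition~\ref{prop:Sp4}, and the inductive hypothesis applied to $\OO_{2n-2d}^{\pm}(q)$ and $\OO_{2n+1}(q)$-type Levis restrict the admissible pairs $(d,n-d)$ sharply; the only surviving possibilities are $d\le e$ together with $2(n-d)\le e'$ (with a short list of small-$q$ exceptions $e=1$, or $d=p=3$ and $q=4$). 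In each such configuration I would use the type-$D$ analogue of Lemma~\ref{lem:const Sp} (giving the constituents of the permutation character of the Weyl group of $D_n$ on its standard parabolic subgroup in terms of bi-partitions) to produce an explicit unipotent character occurring in $\Ind_P^G(1_P)$ whose bi-partition has $e'$-core distinct from that of the trivial character. By Fong--Srinivasan~\cite{FS89} this character lies outside the principal $p$-block, so $H\not\le P$. The remaining small-$q$ exceptional parabolic cases, especially $q=2,3$ with $G=\OO_8^\pm$, $\OO_{10}^\pm$, $\OO_{12}^\pm$, are to be checked directly with the decomposition matrices in \GAP.

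For the non-parabolic maximal subgroups in Proposition~\ref{prop:large SO_2n+}(2)--(7) and~\ref{prop:large SO_2n-}(2)--(6), the assumption $e'\le n$ makes $|M|$ divisible by $p$, and a routine index computation as in Theorems~\ref{thm:PSp} and~\ref{thm:PSO 2n+1} shows the largest $p'$-subgroup of~$M$ has order strictly smaller than $q^{n(n-1)}$; so Lemma~\ref{lem:order} rules them out. The exceptional entries in~(8) and~(7) of those propositions involve $\fA_9\le\OO_8^+(2)$, $\OO_8^+(2)\le\OO_8^+(3)$, $\fA_{16}\le\OO_{14}^+(2)$, $\fA_{12}\le\OO_{10}^-(2)$, and $\fA_{13}\le\OO_{12}^-(2)$; none of the relevant $\fA_m$ has property $(I_p)$ for the primes at which the ambient Sylow is non-cyclic (by Theorem~\ref{thm:alt}), while $\OO_8^+(2)$ inside $\OO_8^+(3)$ can be excluded directly via \GAP. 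The main obstacle I foresee is the combinatorial bookkeeping in the parabolic step: exhibiting a single non-principal-block bi-partition uniformly across the parities of $e$ and $n$, the sign $\eps$, and the relative sizes of $d$ and $n-d$, especially in the borderline situations where one of the factors is $\PSL_2(q)$, $\PSp_4(q)$ or $\OO_8^\pm(q)$ and thus governed by the special exceptions of Propositions~\ref{prop:L2} and~\ref{prop:Sp4}.
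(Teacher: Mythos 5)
Your outline matches the paper's proof in all its structural steps: the cyclic-Sylow reduction via Lemma~\ref{lem:cyc}, \cite{C81} and \cite{FS90}; the bound $|H|\ge q^{n(n-1)}$ from Lemma~\ref{lem:order} funnelling $H$ into the subgroups of Propositions~\ref{prop:large SO_2n+} and~\ref{prop:large SO_2n-}; the Levi analysis via Corollary~\ref{cor:psl} and induction; and the dismissal of the non-parabolic and exceptional subgroups exactly as you describe. However, there is a concrete gap in the parabolic step. Your only exclusion mechanism there is block-theoretic: exhibit a unipotent constituent of $\Ind_P^G(1_P)$ whose $e'$-core differs from that of the trivial character, hence lying outside the principal $p$-block. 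This mechanism is empty precisely in the surviving exceptional configurations with $e=1$ and $p\in\{2,3\}$ (e.g.\ $d=2$, $n=4$ for the plus type with $p\mid q-1$, and the subquotient $\PSL_2(q^2)\times\PSL_2(q)$ forcing $p=2$ for the minus type): for these bad primes all unipotent characters lie in the principal (unique unipotent) block, so no constituent can be ruled out by its core. These cases occur for infinitely many $q$, so your fallback of checking ``small $q$'' in \GAP\ does not cover them. The paper handles them by a different device which your plan lacks: if $H\le P$ then $\Ind_P^G(1_P)$ is a subcharacter of $\Phi_{1_G}$, while $\Phi_{1_G}$ is in turn a summand of the Harish-Chandra induction of the 1-PIM from any \emph{other} parabolic subgroup; comparing multiplicities of a suitable unipotent constituent (such as $(2^2;-)^\pm$ against the induced 1-PIM from the parabolic of type $\GL_4(q)$, computed from \cite{Ja90}) yields the contradiction.

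A second, softer deficiency is that you explicitly defer the combinatorial heart of the argument --- producing, for each admissible $(d,n-d,e,\eps)$ and each parity of $e$ and $n$, the specific bi-partition outside the principal block --- and you yourself flag this as the main obstacle. Since essentially all of the content of the paper's proof resides in that case-by-case verification (including the splitting of degenerate symbols for the plus type, where the relative Weyl group is of type $D_n$ rather than $B_{n-1}$), the proposal as written is a credible plan rather than a complete proof.
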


\begin{proof}
We proceed as in our earlier proofs. Let $e:=e_p(q)$ and $e':=2e/\gcd(2,e)$.
If $e'>n$ then Sylow $p$-subgroups of $G$ are cyclic and we conclude with
Lemma~\ref{lem:cyc}, again using \cite{FS90}. Thus, $e'\le n$. Now first assume
$H$ lies in some
maximal parabolic subgroup $P$ of $G=\OO_{2n}^+(q)$. Then a Levi factor of
$P$ has a subquotient of the form $\bar L=\OO_{2n-2d}^+(q)\times\PSL_d(q)$.
By induction and Corollary~\ref{cor:psl} we have $2(n-d)<\max\{e',5\}$ and
$d\le e$ or $d\le3,e=1$.
First assume $e$ is odd, so $e'=2e$. Then our inequalities force $e=1$, and
either $d=2$, $n=4$, or $d=p=3$, $q=4$ and $n\le5$. If $d=2$ we have $P$ is of
type $\SO_4^+(q)\GL_2(q)$. For $p\ge3$ the permutation character on~$P$
contains the unipotent characters labelled $(2^2;-)^\pm$, not contained in the
induced
1-PIM of the parabolic subgroup of type $\GL_4(q)$ (known from \cite{Ja90}),
while for $p=2$, the latter contains the character $(2^2;-)^\pm$ with a smaller
multiplicity than the former. If $d=p=3$, $P$ is of type
$\SO_{2n-6}^+(q)\GL_3(q)$. This is not a maximal parabolic subgroup when $n=4$.
For
$n=5$ the permutation character on $P$ contains the character labelled $(1;3,1)$
which is not a constituent of the 1-PIM induced from a parabolic subgroup of
type $\GL_5(q)$. This concludes the case when $e$ is odd.
\par
If $e$ is even, then $2(n-d)<\max\{e,5\}$ or $2(n-d)=6$ and $e=4$. If $e=4$
as $d\le e$, we have $n\le7$. Now the induced 1-PIM for $P$ of type
$\SO_6^+(q)\GL_d(q)$ contains the unipotent characters labelled $(3;1)$,
$(41;-)$, $(2^21^2;-)^\pm$, $(61;-)$ for $n=4,5,6,7$ respectively, not lying
in the principal $p$-block. For $P$ of type $\SO_4^+(q)\GL_d(q)$ it again
contains $(3;1)$, $(41;-)$, $(2^21^2;-)^\pm$ for $n=4,5,6$ respectively.
If $e=2$ then only $P$ of type $\SO_4^+(q)\GL_2(q)$ needs to be considered.
Here the character labelled $(2,1;1)$, not in the principal $p$-block, is a
constituent of the permutation character. Hence we have $e\ge6$ and
$n<3e/2\le 3n/2$. Here the unipotent character labelled $(n+1-e,e-1;-)$ is a
constituent of the permutation character on $P$, but does not lie in the
principal $p$-block.
\par
Turning to the groups of minus type, assume now $P$ is maximal parabolic in
$G=\OO_{2n}^-(q)$, of type $\SO_{2n-2d}^-(q)\GL_d(q)$. Again first assume
$e$ is odd. Then as before this forces $e=1$, and either $d=2$, $n=4$, or
$d=p=3$, $q=4$ and $n=5$. In the first case, the relevant maximal parabolic
subgroup $P$ has a subquotient $\OO_4^-(q)\PSL_2(q)=\PSL_2(q^2)\PSL_2(q)$, and
by
Proposition~\ref{prop:L2} we need to have $p=2$. In this case, the permutation
character on $P$ contains the unipotent character labelled $(2,1;-)$, but this
is not a constituent of the induced 1-PIM from a maximal parabolic subgroup of
type $\GL_3(q)$. When $d=p=3$, $q=4$ and $n=5$ then $P$ is of type
$\OO_4^-(4)\PSL_3(4)=\PSL_2(16)\PSL_3(4)$, but this does not have property
$(I_3)$ by Proposition~\ref{prop:L2}.   \par
So now assume $e$ is even. Then our conditions yield $2(n-d)<\max\{e,5\}$.
Since Sylow $p$-subgroups of $G$ are still cyclic when $n=e$ we may also
assume $n\ge e+1$. If $e=2$ then $P$ has type $\SO_4^-(q)\GL_2(q)$, and again
the permutation character on $P$ contains the character $(2,1;-)$, which is not
a constituent of the induced 1-PIM from a parabolic subgroup of type $\GL_3(q)$.
If $n\ge4$ then furthermore, $n\le d+e/2\le 3e/2$. In this case, by
Lemma~\ref{lem:const Sp} the permutation character on $P$ contains the
principal series unipotent character labelled $(e-1;n-e)$ which does not lie
in the principal $p$-block of $G$.
\par
The groups $M$ in Proposition~\ref{prop:large SO_2n+}(2)--(7) and in
Proposition~\ref{prop:large SO_2n-}(2)--(6) have order divisible by $p$, and
thus the order of a maximal $p'$-subgroup of $M$ is smaller than $q^{n^2-n}$,
the size
of a Sylow $r$-subgroup of $G$, where $r|q$. Finally, the groups $\fA_9$,
$\OO_8^+(2)$, $\fA_{12}$ and $\fA_{13}$ do not have property $(I_p)$ for
$p\le7$, $p\le7$, $p\le 11$, $p\le11$ respectively, and the Sylow $p$-subgroups
of $\OO_8^+(2)$, $\OO_8^+(3)$, $\OO_{10}^-(2)$ and $\OO_{12}^-(2)$,
respectively, are cyclic for all larger primes, so no examples can arise from
these.
\end{proof}

%%%%%%%%%%%%%%%%%%%%%%%%%%%%%%%%%%%%
\subsection{Groups of exceptional type}
We now discuss the exceptional groups of Lie type. Structural results on
their Sylow $p$-subgroups are given in \cite[Thm~25.14]{MT}, the characters in
the principal blocks are described in \cite{BMM} and \cite{En00}. We use
\Chevie\ \cite{Mi15} for the computation of Harish-Chandra induction.

\begin{thm}   \label{thm:exc}
 Let $G$ be simple of exceptional Lie type in characteristic $r$ and $p$ a
 prime dividing $|G|_{r'}$. Then $G$ has property $(I_p)$ if and only if it
 occurs in Table~$\ref{tab:exc}$.
\end{thm}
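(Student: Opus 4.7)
The plan is to mimic the inductive case-by-case strategy developed for the classical groups in Theorems~\ref{thm:PSU}, \ref{thm:PSp}, \ref{thm:PSO 2n+1}, and \ref{thm:PSO 2n}. Fix an exceptional type $G(q)$ and set $e:=e_p(q)$. First, I would dispose of the cyclic Sylow $p$-subgroup regime: by \cite[Thm~25.14]{MT} this covers all sufficiently large $e$ relative to the rank, and then Lemma~\ref{lem:cyc}, together with the absence of 2-transitive permutation representations of these $G$ noted in \cite{C81} and the fact that the trivial character is never connected to the exceptional node on the principal-block Brauer tree (from \cite{HL89} and the structural description of $d$-Harish--Chandra series in \cite{BMM}), rules out all but the cyclic Sylow entries of Table~\ref{tab:exc}, which are obtained from 2-transitive actions via Lemma~\ref{lem:2-trans}.

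Next, for each remaining value of $e$ (so Sylow $p$-subgroups are non-cyclic), I would analyze the possible maximal overgroups of a candidate $p'$-subgroup $H$ using the classification of maximal subgroups of exceptional groups of order at least $q^N$, where $N$ is the number of positive roots, this being the size of a Sylow $r$-subgroup and hence the forced lower bound on $|H|$ by Lemma~\ref{lem:order}. For parabolic overgroups $P$ with Levi factor $L$, I would decompose $\Ind_P^G(1_P)$ into principal-series unipotent characters via Howlett--Lehrer theory and invoke the block-theoretic results of \cite{BMM,En00} to isolate a constituent outside the principal $p$-block. Whenever this constituent argument is delicate, Corollary~\ref{cor:compfac} applied to the simple composition factors of $L/Z(L)$, combined with the base cases from Propositions~\ref{prop:L2}, \ref{prop:U3}, \ref{prop:Sp4} and Theorems~\ref{thm:psl}, \ref{thm:PSU}, \ref{thm:PSp}, restricts the rank of $L$ sharply.

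For non-parabolic maximal overgroups $M$ the generic argument is that $|M|_{p'}<q^N$, contradicting Lemma~\ref{lem:order}. The exceptions to the generic bound are twisted subsystem subgroups (e.g.\ $F_4(q)\le E_6(q)$, $\tw{3}D_4(q)\le E_6(q)$), subfield subgroups when $q$ is a proper power, and small sporadic-type subgroups at low rank and low $q$; each of these I would rule out either by the principal-block constituent argument via Harish--Chandra computation in \Chevie\ \cite{Mi15}, or by direct inspection of decomposition matrices in \cite{GAP}. The positive entries of Table~\ref{tab:exc} are established by Lemma~\ref{lem:2-trans} in the cyclic Sylow cases, and by the basic-set / Harish--Chandra indecomposability argument used for $\PSU_n$ in Propositions~\ref{prop:Un(2)} and~\ref{prop:Un p=2} for the non-cyclic cases (expected for $p\in\{2,3\}$ and for the extra-special Sylow $7^{1+2}$ situation flagged in the introduction).

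The main obstacle, as already signalled in the introduction, is twofold: controlling the twisted subsystem overgroups, whose unipotent character theory must be threaded through the generic $d$-Harish--Chandra framework of \cite{BMM}, and handling the small-parameter groups (notably $G_2$, $\tw{3}D_4$, $\tw2F_4$ and $\tw2E_6$) where generic arguments break down and explicit \Chevie\ or \GAP\ computations are required. A further delicate point is verifying indecomposability of the candidate 1-PIMs for the positive entries of Table~\ref{tab:exc}, which typically rests on basic-set arguments together with Harish--Chandra restriction rather than full decomposition matrices.
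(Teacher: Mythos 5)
Your plan reproduces the paper's proof strategy essentially step for step: cyclic Sylow cases are handled by Lemma~\ref{lem:cyc} together with Cameron's classification and the known Brauer trees, a candidate $H$ is forced by Lemma~\ref{lem:order} into a maximal subgroup of order at least $q^N$ from the Liebeck--Saxl lists, parabolic overgroups are eliminated by exhibiting Harish--Chandra constituents outside the principal $p$-block (via \Chevie\ and \cite{BMM,En00}), and the non-parabolic and small-parameter cases are disposed of by order comparisons, induction through the classical-group theorems, and \GAP. The only real divergence is in how the positive non-cyclic entries are established: the paper cites Fong's explicit decomposition numbers for ${}^2G_2(q^2)$ at $p=2$ and a direct computation for $\tw2F_4(2)'$ at $p=5$ rather than a basic-set indecomposability argument, and the extra-special $7^{1+2}$ situation you invoke actually belongs to the Held group in Theorem~\ref{thm:spor}, not to any exceptional group of Lie type.
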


\begin{table}[htb]
\caption{Exceptional groups of Lie type satisfying $(I_p)$}   \label{tab:exc}
$\begin{array}{cccc}
 G& H& p& \dim\Phi_{1_G}\\
\hline
 \tw2B_2(q^2),\ q^2\ge8&           P& p|(q^4+1)& q^4+1\\
   ^2G_2(q^2),\ q^2\ge27&           P& 2<p|(q^6+1)& q^6+1\\
   ^2G_2(q^2),\ q^2\ge27&      O^2(B)& 2& 2(q^6+1)\\
  \tw2F_4(2)'& \PSL_3(3).2& 5& 1600\\
%%.........
\end{array}$
\end{table}

\begin{proof}
We discuss the various families in turn. If Sylow $p$-subgroups of $G$ are
cyclic, we can argue using Lemma~\ref{lem:cyc} in conjunction with \cite{C81}.
This leads to the first two entries in Table~\ref{tab:exc}.

For the Suzuki groups, all Sylow subgroups for non-defining primes are cyclic.
For the Ree groups $^2G_2(q^2)$, $q^2=3^{2f+1}\ge27$, the only non-cyclic Sylow
subgroups for non-defining primes are for $p=2$. Here, Fong \cite{Fong} has
shown that $\Phi_{1_G}$ is induced from $O^2(B)$, for $B$ a Borel subgroup.
\par
For the other series we make use of the result of Liebeck and Saxl mentioned
earlier. For $G=G_2(q)$, $q>2$, the relevant primes are the divisors of $q^2-1$.
The cases $q=3,4$ can be dealt with via \GAP, so assume $q\ge5$.
If $2<p|(q+1)$, then a Borel subgroup $B$ of $G$ has order $q^6(q-1)^2$ prime
to~$p$. The only maximal subgroups of larger order, by \cite[Tab.~1]{LS87}, are
the two types of maximal parabolic subgroups (which both contain a Borel
subgroup of $G$) as well as $\SL_3(q).2$ and
$\SU_3(q).2$. The largest $p'$-subgroups of the latter two have order less
than $|B|$ by Proposition~\ref{prop:large SLn}, while (by Harish-Chandra
theory) both parabolic subgroups have a defect zero constituent in their
permutation character, so $H$ is contained in neither, and no example can arise.

Now assume $2<p|(q-1)$. Here a Sylow $r$-subgroup $U$ of $G$ gives a lower bound
$q^6$ for~$|H|$. Again, the maximal subgroups $\SL_3(q).2$ and $\SU_3(q).2$ do
not
have large enough $p'$-subgroups, and the same holds for $^2G_2(q)$ (if $q$ is
an odd power of~3) and $G_2(q^{1/2})$ (if $q$ is a square). So by \cite{LS87},
$H$ must lie in a maximal parabolic subgroup, of structure $[q^5].\GL_2(q)$.
By Proposition~\ref{prop:not parabolic} this forces $p=3$, and then $q=4$ by
Corollary~\ref{cor:compfac} and Proposition~\ref{prop:L2}, which was excluded
before.
\par
Finally, for $p=2$ the decomposition matrices in \cite[\S2.2.1 and 2.3.1]{HS92}
show that we do not get an example.   \par
For $G=\tw3D_4(q)$, the relevant primes are the divisors of $q^6-1$. For
$2<p$ dividing $q^3+1$, a Borel subgroup $B$ of $p'$-order $q^{12}(q-1)(q^3-1)$
shows that $H$ must lie in a maximal parabolic subgroup, by \cite{LS87}. But
their permutation characters contain constituents from non-principal blocks.
For $2<p$ dividing $q^3-1$ a potential subgroup $H$ must again be contained in
a maximal parabolic subgroup. In this case \cite[Prop.~5.3]{Ge91} shows that
the permutation characters properly contain the 1-PIM.
For $p=2$ we do not get an example by the decomposition matrix in \cite{Hi07}.
\par
For $G=\tw2F_4(q^2)'$, $q^2=2^{2f+1}\ge2$, the case $q^2=2$ can be treated
directly, leading to the stated example with $p=5$. For $q^2\ge8$ we need to
discuss primes~$p$ dividing $q^8-1$. Again, by \cite{Ma91}, $H$ must be
contained in a maximal parabolic subgroup $P$. The permutation characters on
both maximal parabolic subgroups contain constituents of $p$-defect zero, for
any divisor~$p$ of $(q^2+1)(q^4+1)$. So assume $p|(q^2-1)$.  Here
\cite[Thm~3.1]{Hi11} shows
that the trivial character is the only unipotent constituent of the 1-PIM, and
hence the 1-PIM is a proper summand of the permutation character on $P$.
\par
For $G=F_4(q)$ the relevant primes are the divisors of $(q^6-1)(q^2+1)$. By
Propositions~\ref{prop:large SO_2n+1} and~\ref{prop:large SO_2n+} any
$p'$-subgroup of the maximal subgroups of types $B_4(q)$, $D_4(q).\fS_3$ or
$\tw3D_4(q).3$ of $G$ has order less than $q^{24}$, the order of a Sylow
$r$-subgroup of $G$,
or is parabolic, hence contained in a parabolic subgroup of $G$ by the
Borel--Tits theorem. The same holds for the subfield subgroups. So we just need
to discuss parabolic subgroups. Now by Harish-Chandra theory, for any $p$
dividing $(q^6-1)(q^2+1)$ but not $q-1$, the permutation characters
on all maximal parabolic subgroups of $G$ have constituents in non-principal
$p$-blocks. So only primes dividing $q-1$ remain. By
Proposition~\ref{prop:not parabolic}, we may in fact assume that $p=2$ or
$p=3$. Then by Theorems~\ref{thm:psl}, \ref{thm:PSp} and~\ref{thm:PSO 2n+1}
the only Levi subgroups of a maximal parabolic subgroup having property $(I_p)$
are the ones of type $A_2+A_1$ for $p=3$ when $q=4$. Now the Harish-Chandra
induction of the 1-PIM from one of the two Levi subgroups of type $A_2+A_1$
contains the unipotent character $\phi_{9,2}$ four times, while the
Harish-Chandra induction of the 1-PIM from a Levi subgroup of type $C_3$ only
contains it twice. Thus, the former cannot contain a $p'$-subgroup with respect
to which $G$ satisfies $(I_3)$. The other potential maximal parabolic
subgroup is now also ruled out by application of the graph automorphism.
\par
For $G=E_6(q)$, arguing as before we need to discuss primes dividing $q^6-1$.
The candidates for maximal parabolic subgroups $P$ satisfying $(I_p)$ can be
read off from Theorems~\ref{thm:psl} and~\ref{thm:PSO 2n}. For $P$ of type
$A_5$ or $A_4+A_1$ the only possibility is $e_p(q)=6$, but in the first
case, the Harish-Chandra induction of the 1-PIM to $G$ contains the unipotent
character $\phi_{15,5}$, and in the second, the permutation character contains
$\phi_{64,4}$. Since both lie outside the principal $p$-block, these cases are
out. For $P$ of type $2A_2+A_1$, the constituent $\phi_{10,9}$ in the
permutation character rules out the possibility $e_p(q)=6$, and the constituent
$\phi_{81,6}$ shows we can't have $e_p(q)=3$ or $p=3$. This exhausts the
candidate maximal parabolic subgroups. All of the non-parabolic maximal
subgroups
listed in \cite[Tab.~1]{LS87} have order divisible by any prime $p$ for which
Sylow $p$-subgroups of $G$ are non-cyclic, but none of them has property
$(I_p)$ by Theorems~\ref{thm:psl}, \ref{thm:PSO 2n} and by induction, except
for the subsystem subgroup of type $A_5+A_1$ for $e_p(q)=6$. But by
Proposition~\ref{prop:large SLn} the largest $p'$-subgroup of the latter has
order smaller than $q^{36}$, the order of a maximal unipotent subgroup of $G$.
\par
For $G=\tw2E_6(q)$, the maximal parabolic subgroups possibly having
property $(I_p)$ are those of type $\tw2A_5(q)$ for $p=2$; those of type
$\tw2A_5(q)$ and $A_2(q^2)A_1(q)$ for $p=3,q=2$; and those of type
$A_2(q)A_1(q^2)$ for $p=3,q=4$, by Theorems~\ref{thm:psl}, \ref{thm:PSU}
and~\ref{thm:PSO 2n}.
The permutation characters of the parabolic subgroups in the last two cases
contain the unipotent character $\phi_{8,3}'$, outside the principal 3-block by
\cite{En00}. Now assume $p=2$ and $P$ is parabolic of type $\tw2A_5(q)$. Here,
the unipotent part of the induced 1-PIM of $P$ properly contains the unipotent
part of the induced 1-PIM from a parabolic subgroup of type $A_2(q^2)A_1(q)$
and thus $P$ cannot contain an admissible $p'$-subgroup. The non-parabolic
subgroups in \cite[Tab.~1]{LS87} all have order divisible by $p$, but none has
$(I_p)$ by Theorems~\ref{thm:PSU}, and~\ref{thm:PSO 2n}, except possibly the
subsystem subgroup of type $\tw2A_5(q)A_1(q)$ for $(p,q)=(3,2)$ or for $p=2$.
Here again the largest $p'$-subgroup of the latter has order smaller
than~$q^{36}$.
\par
For $G=E_7(q)$, the relevant primes are the divisors $p$ of $(q^6-1)(q^2+1)$.
The maximal parabolic subgroups possibly having $(I_p)$ are those of types
$A_5+A_1$, $A_3+A_2+A_1$ and $A_4+A_2$ for $e_p(q)=6$, and
$A_3+A_2+A_1$ for $e_p(q)=4$, by Theorems~\ref{thm:psl} and~\ref{thm:PSO 2n}.
For all of these, the permutation character has a constituent not lying in the
principal $p$-block. All large non-parabolic maximal subgroups of $G$ from
\cite[Tab.~1]{LS87} have order divisible by $p$, but none of them has
property~$(I_p)$ by our earlier results.
\par
For $G=E_8(q)$, Sylow $p$-subgroups are non-cyclic for primes $p$ such that
the Euler $\varphi$-function of $e_p(q)$ is at most~4. For all maximal
parabolic subgroups the permutation character contains constituents outside the
principal $p$-block when $e_p(q)>3$. Furthermore, by our earlier results, none
of these has a Levi subgroup with $(I_p)$ for $e_p(q)\le3$, so parabolic
subgroups cannot contain
an admissible $p'$-subgroup. Of the non-parabolic subgroups in
\cite[Tab.~1]{LS87}, only $H=E_8(\sqrt{q})$ might lead to an example, for
$e_p(q)=8$. But the order of $H$ is smaller than that of the $p'$-parabolic
subgroup of type $A_6+A_1$, so it cannot lead to an example
by Lemma~\ref{lem:order}.
\end{proof}

%%%%%%%%%%%%%%%%%%%%%%%%%%%%%%%%%%%%%%%%%%%%%%%%%%%%%%%%%%%%%%%%%%%%%%%%%
\section{About groups of Lie type in defining characteristic} \label{sec:defchar}

At present we are not able to settle our question for simple groups of
Lie type when $p$ is the defining characteristic.
The only examples we are aware of are the ones for $\PSL_2(q)$ in
Proposition~\ref{prop:L2}(4)--(6), $\fA_5\cong\PSL_2(4)\cong\PSL_2(5)$ for
$p=2$ and $p=5$, $\fA_6\cong\PSp_4(2)$ for $p=2$, $\PSL_3(2)$ for $p=2$,
and the group $\PSU_4(2)\cong\PSp_4(3)$ for $p=2$ and $p=3$.

We would not be surprised if these turn out to be the only ones.
A heuristic argument for this runs as follows. First, the Steinberg
character has trivial constituents upon restriction to small enough
subgroups. Observe that the bound for type $A_n$ is the same as the one
implied by Lemma~\ref{lem:l(G)}.

\begin{prop}   \label{prop:St}
 Let $G=G(q)$ be simple of Lie type in characteristic~$p$. Then $G$ does not
 have property $(I_p)$ with respect to any $p'$-subgroup of order at most $q^m$,
 with $m=m(G)$ as in Table~\ref{tab:m}.
\end{prop}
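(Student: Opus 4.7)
The plan is to exploit the Steinberg character $\St$ of $G$. Since $\St(1)=|G|_p = q^N$ with $N=|\Phi^+|$, the character $\St$ is of $p$-defect zero, hence does not lie in the principal $p$-block of $G$. Thus, assuming $\Phi_{1_G}=\Ind_H^G(1_H)$ for a $p'$-subgroup $H\le G$, the ordinary multiplicity $\langle\St,\Phi_{1_G}\rangle$ vanishes, and Frobenius reciprocity yields
$$\langle\Res^G_H(\St),1_H\rangle = 0. \eqno{(\dagger)}$$
The proposition will follow if we show that $(\dagger)$ fails whenever $|H|\le q^{m(G)}$, i.e.\ that $\Res^G_H(\St)$ is forced to contain a trivial summand in that range.

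For this I would use the classical value formula $|\St(h)|=|C_G(h)|_p$, valid on every $p'$-element $h\in G$, together with the estimate
$$|H|\cdot\langle\Res^G_H(\St),1_H\rangle \ = \ q^N+\sum_{1\ne h\in H}\St(h)\ \ge\ q^N-(|H|-1)\,M_G,$$
where $M_G:=\max_{1\ne h\in G_{p'}}|C_G(h)|_p$. Any non-identity semisimple $h$ has $C_G(h)$ a proper pseudo-Levi subgroup of $G$, so $|C_G(h)|_p = q^{N-k_h}$, where $k_h$ is the number of positive roots lost; reading off the largest proper pseudo-Levi from the extended Dynkin diagram yields precisely $M_G = q^{N-m(G)}$ for the tabulated exponent. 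Substituting gives
$$|H|\cdot\langle\Res^G_H(\St),1_H\rangle \ \ge\ q^N-(|H|-1)\,q^{N-m(G)}\ >\ 0$$
as soon as $|H|\le q^{m(G)}$, contradicting $(\dagger)$.

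For type $A_n$ the same threshold arises independently from Lemma~\ref{lem:l(G)}: in defining characteristic, Steinberg's tensor product theorem gives $l(G)\ge q^{m(G)}$, so the bound $|H|\ge l(G)$ already forces $|H|>q^{m(G)-1}$, and no appeal to $\St$ is needed. The main obstacle in the remaining types is the purely combinatorial bookkeeping: one must enumerate the maximal proper subsystems of each classical and exceptional root system (including the pseudo-Levi subsystems associated to end-nodes of the affine Dynkin diagram) and identify the one with the largest positive-root count; this number is precisely $N-m(G)$.
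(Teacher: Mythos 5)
Your proposal is correct and follows essentially the same route as the paper: both estimate $\langle\Res^G_H(\St),1_H\rangle$ via $|\St(h)|=|C_G(h)|_p$ for $p'$-elements, bound the maximal semisimple centraliser by reading maximal-rank subsystems off the extended Dynkin diagram to get $q^{N-m(G)}$, and conclude that the restriction must contain the trivial character when $|H|\le q^{m(G)}$. The only (immaterial) difference is the final contradiction: you invoke that $\St$ lies outside the principal block so $\langle\St,\Ind_H^G(1_H)\rangle=0$, whereas the paper applies Lemma~\ref{lem:fp} to the simple defect-zero module afforded by $\St$; both are valid and the combinatorial verification of the table is left at the same level of detail in each.
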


\begin{table}[htb]
\caption{Bounds for $p'$-subgroups}   \label{tab:m}
$\begin{array}{c|ccccccccc}
 G& A_n\ (n\ge1)& B_n,C_n\ (n\ge2)& D_n\ (n\ge4)& G_2& \tw3D_4& F_4& E_6,\tw2E_6& E_7& E_8\\
 \hline
 m(G)& n& 2(n-1)& 2(n-1)& 3& 8& 8& 16& 32& 56\\
\end{array}$
\end{table}

\begin{proof}
Let $H\le G$. We evaluate the scalar product of the Steinberg character $\St$
of $G$ restricted to $H$ with the trivial
character of $H$. The value of $\St$ on a semisimple element $s\in G$ is up to
sign equal to $|C_G(s)|_p$ (see e.g.~\cite[Prop.~3.4.10]{GM20}), which in turn
equals $q^N$ for $N$ the number of positive roots in the root system of the
underlying algebraic group $\mathbf G$, and zero on all other elements. Since
centralisers of semisimple elements are maximal rank subgroups which can be
seen on the extended Dynkin diagram of $\mathbf G$ (see e.g. \cite[\S13]{MT}),
it is easy to determine an upper bound $m$ such
that $|\St(s)|q^m\le \St(1)$ for any $1\ne s\in G$. For example, if $G$ is
of type $B_n$ with $n\ge2$, then $\St(1)=q^{n^2}$ while the maximal rank
subgroups with largest Sylow $p$-subgroup are of type $B_{n-1}B_1$, so we have
$m=n^2-(n-1)^2-1=2(n-1)$. For $G$ of type $E_8$, $\St(1)=q^{120}$, and
maximal Sylow $p$-subgroups are attained for subgroups of type $E_7A_1$,
giving $m=120-63-1=56$. In type $\tw3D_4$ the relevant subgroups are involution
centralisers of type $A_1(q^3)A_1(q)$.   \par
Then
$$|H|\,\langle\St|_H,1_H\rangle\ge \St(1)-(|H|-1)\St(1)/q^m>0.$$
Thus, $\St|_H$ has trivial constituents when $|H|\le q^m$. Since the Steinberg
character is of $p$-defect zero, it gives rise to a simple $kG$-module, and so
for $H$ a $p'$-subgroup, $G$ cannot have property $(I_p)$ with respect to $H$
by Lemma~\ref{lem:fp}.
\end{proof}

Secondly, generically any $p'$-subgroup of $G$ should have order bounded
above by $q^{m(G)}$ and Lemma~\ref{lem:order} would allow to conclude.

We have implemented this approach for the exceptional groups of small rank.

\begin{lem}
 The Suzuki groups $G=\tw2B_2(q^2)$ with $q^2=2^{2f+1}\ge8$ do not satisfy
 $(I_2)$.
\end{lem}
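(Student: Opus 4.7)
The plan is to adapt the Steinberg character argument of Proposition~\ref{prop:St}, which is absent from Table~\ref{tab:m} for the Suzuki series only because of its small Lie rank. Write $q_0:=2^f$, so that $q^2=2q_0^2$ and $|G|=q^4(q^2-1)(q^2+2q_0+1)(q^2-2q_0+1)$.

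First I would bound $|H|$ for any $2'$-subgroup $H\le G$. By Suzuki's classification, the maximal subgroups of $G$ are a Borel subgroup $B$ (with $|B|_2=q^4$), the normaliser $D_{2(q^2-1)}$ of a split torus, the two normalisers $(q^2\pm 2q_0+1).4$ of the non-split tori, and subfield subgroups. The three maximal tori of $G$ all have odd order, and the corresponding torus normalisers have $2$-part at most $4$; hence every $2'$-subgroup of $G$ lies, up to conjugacy, inside a maximal torus. Consequently $|H|\le q^2+2q_0+1<q^4$ as soon as $q^2\ge 8$.

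Next I would compute the values of the Steinberg character $\St$ of $G$ on $H$. Since $G$ has Lie rank $1$, for any non-identity semisimple $s\in G$ the centraliser $C_G(s)$ is contained in a maximal torus, hence has odd order; thus $|\St(s)|=|C_G(s)|_2=1$ by \cite[Prop.~3.4.10]{GM20}, while $\St(1)=q^4$. Because $H$ is a $2'$-group, every $h\in H\setminus\{1\}$ is semisimple, and therefore
$$|H|\,\langle \St|_H,1_H\rangle \;=\; \sum_{h\in H}\St(h)\;\ge\;q^4-(|H|-1)\;>\;0,$$
so the trivial character appears as a constituent of $\St|_H$.

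Finally, since $\St$ has degree $q^4=|G|_2$, it is of $2$-defect zero and affords a non-trivial simple projective $kG$-module, which by the preceding paragraph has non-zero $H$-fixed points. Lemma~\ref{lem:fp} then forbids $G$ from having property $(I_2)$ with respect to $H$, and as $H$ was arbitrary this proves the claim. The only external ingredient is the classical description of the maximal subgroups of $\tw2B_2(q^2)$; once this is in hand the argument is a straightforward specialisation of Proposition~\ref{prop:St} with $m=4$, and no serious obstacle arises.
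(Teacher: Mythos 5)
Your proof is correct and follows essentially the same route as the paper: identify the largest odd-order subgroups via Suzuki's subgroup classification, show the Steinberg character restricted to any $2'$-subgroup contains the trivial character, and conclude with Lemma~\ref{lem:fp}. The only cosmetic difference is that the paper computes the exact multiplicity $q^2-\sqrt{2}q$ for the maximal cyclic subgroup and then invokes Lemma~\ref{lem:order}, whereas you bound the inner product uniformly for all $2'$-subgroups at once.
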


\begin{proof}
According to the description by Suzuki, the largest odd order subgroups of~$G$
are cyclic of order $q^2+\sqrt{2}q+1$ (\cite[Tab.~8.16]{BHR}). The restriction
of the Steinberg character of $G$ to such a subgroup $H$ contains the trivial
character with multiplicity $q^2-\sqrt{2}q>0$, so $G$ cannot have
property $(I_2)$ with respect to $H$ by Lemma~\ref{lem:fp}, and hence neither
with respect to any other $p'$-subgroup by Lemma~\ref{lem:order}.
\end{proof}

\begin{lem}
 The Ree groups $G={}^2G_2(q^2)$ with $q^2=3^{2f+1}\ge27$ do not satisfy $(I_3)$.
\end{lem}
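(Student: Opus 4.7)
The plan is to follow the Suzuki argument from the preceding lemma almost verbatim. First, I would identify the largest $3'$-subgroup of~$G$: from Kleidman's classification of the maximal subgroups of $G={}^2G_2(q^2)$, the only maximal subgroups with $3$-part at most~$3$ are the Borel complement of order $q^2-1$, the involution centraliser $\langle z\rangle\times\PSL_2(q^2)$, the $2$-local subgroup $(2^2\times D_{(q^2+1)/2}){:}3$, and the two torus normalisers $(q^2\pm\sqrt{3}q+1){:}6$. A comparison of $3'$-parts shows that the largest $3'$-subgroup of $G$ is a Hall complement $H$ of $(q^2+\sqrt{3}q+1){:}6$, of order $2m$ with $m:=q^2+\sqrt{3}q+1$; by the structure of the torus normaliser, $H$ is dihedral.

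Next, I would estimate the multiplicity $\langle\St|_H,1_H\rangle$. Every non-identity element of the $3'$-group $H$ is semisimple, so $|\St(h)|=|C_G(h)|_3$ by the standard Deligne--Lusztig formula. On the cyclic torus $C_m\le H$, the non-trivial elements are regular semisimple with centraliser $C_m$, of order coprime to~$3$, giving $|\St(h)|=1$; on each of the $m$ involutions of $H$ one has $C_G(h)=\langle h\rangle\times\PSL_2(q^2)$, giving $|\St(h)|=q^2$. Combining,
\[
    \sum_{h\in H}\St(h)\;\ge\;q^6-(m-1)-m\,q^2,
\]
which is strictly positive for every $q^2=3^{2f+1}\ge 27$. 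Hence $\langle\St|_H,1_H\rangle\ge 1$, so the Steinberg module---a simple non-trivial $kG$-module since $\St$ has $3$-defect zero---has non-zero $H$-fixed points. Lemma~\ref{lem:fp} therefore precludes $G$ from satisfying $(I_3)$ with respect to $H$, and Lemma~\ref{lem:order} rules out every other $3'$-subgroup, since $H$ has maximum order among them.

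The only delicate point is verifying the exhaustiveness of the subgroup list above and, in particular, that $H$ is indeed the $3'$-subgroup of largest order; the structural description (dihedral, with $m$ involutions) enters only for the refined count, but the cruder estimate $\sum_{h\in H}\St(h)\ge q^6-(|H|-1)q^2$ is already more than enough and avoids any careful analysis of conjugacy classes in~$H$ or the actual signs of $\St$.
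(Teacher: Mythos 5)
Your proof is correct and follows the same strategy as the paper's (and as the Suzuki lemma immediately preceding it): locate the largest $3'$-subgroups via the list of maximal subgroups, show that the restriction of the Steinberg character to such a subgroup contains the trivial character, and conclude with Lemmas~\ref{lem:fp} and~\ref{lem:order}. The one genuine point of divergence is which subgroup is extremal. The paper asserts, citing \cite[Tab.~8.43]{BHR}, that the largest $3'$-subgroups are direct products of a Klein four group with a dihedral group of order $q^2+1$; taken literally such a group has order $4(q^2+1)$ and cannot exist, since its Sylow $2$-subgroup would have order $16$ while $|G|_2=8$, so what must be meant is $2^2\times D_{(q^2+1)/2}\cong 2\times D_{q^2+1}$ of order $2(q^2+1)$, sitting in the involution centraliser or the four-group normaliser. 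Your candidate, the dihedral Hall $3'$-complement of the torus normaliser $(q^2+\sqrt{3}q+1){:}6$, has order $2(q^2+\sqrt{3}q+1)>2(q^2+1)$ and therefore really is the larger of the two; your identification of the maximal $3'$-subgroup appears to be the accurate one. Happily the discrepancy is immaterial to the conclusion: as you note, the crude estimate $\sum_{h\in H}\St(h)\ge q^6-(|H|-1)q^2$ is positive for \emph{every} $3'$-subgroup of order less than $q^4$, and every maximal subgroup only contains $3'$-subgroups of order roughly $2q^2$; with that uniform bound, Lemma~\ref{lem:fp} alone excludes every $3'$-subgroup and one does not even need Lemma~\ref{lem:order} or the exact extremal subgroup.

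Two small repairs are needed. First, your opening criterion ``maximal subgroups with $3$-part at most~$3$'' is not the condition you actually use (the involution centraliser $\langle z\rangle\times\PSL_2(q^2)$ has $3$-part $q^2$); the correct formulation is that every $3'$-subgroup lies in some maximal subgroup, and one bounds the largest $3'$-subgroup of each maximal subgroup in turn. Second, the subfield subgroups ${}^2G_2(q_0^2)$ with $q^2=q_0^{2r}$, $r\ge3$ prime, are missing from your list; they are disposed of at once by induction or by the same uniform Steinberg bound, but they must be mentioned for the maximality claim to be complete. Finally, if you do insist on the refined count for the specific dihedral $H$ rather than the crude bound, the appeal to Lemma~\ref{lem:order} additionally requires that every $3'$-subgroup of the maximal order $2m$ be conjugate to $H$ (which holds, since such a group has a normal subgroup of order $m$ consisting of regular semisimple elements and hence normalises a conjugate of the torus), a point left implicit both in your write-up and in the paper.
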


\begin{proof}
From \cite[Tab.~8.43]{BHR} it follows that the largest order $3'$-subgroups
$H$ of $G$ are direct products of a dihedral group of order $q^2+1$ with
a Klein four group. An easy calculation shows that the trivial character
occurs with positive multiplicity in the restriction to $H$ of the Steinberg
character of $G$ and we conclude as in the previous case.
\end{proof}

\begin{lem}
 The groups $G=G_2(q)$ with $q=p^f\ge3$ do not satisfy $(I_p)$.
\end{lem}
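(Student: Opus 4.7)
The plan is to mirror the proofs just given for the Suzuki and Ree groups. By Lemma~\ref{lem:fp}, it suffices to exhibit, for every maximal $p'$-subgroup $H$ of $G=G_2(q)$, a positive value of $\langle\St|_H,1_H\rangle$, where $\St$ denotes the Steinberg character (which affords a simple $kG$-module since it has $p$-defect zero); Lemma~\ref{lem:order} then propagates the failure of $(I_p)$ from maximal $p'$-subgroups to all $p'$-subgroups of $G$.

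The first reduction is Proposition~\ref{prop:St} with $m(G_2)=3$, which already handles any $p'$-subgroup of order at most $q^3$. So we may assume $|H|>q^3$ and appeal to the classification of the maximal subgroups of $G_2(q)$ (available in \cite{BHR} for small $q$): the only maximal subgroups whose maximal $p'$-subgroups can exceed $q^3$ are the reductive subgroups of maximal rank $\SL_3(q).2$, $\SU_3(q).2$, and (for odd $q$) the involution centralizer $(\SL_2(q)\ti\SL_2(q)).2$. The parabolic subgroups have $p'$-part $|\GL_2(q)|_{p'}=(q-1)^2(q+1)<q^3$, and the subfield subgroups $G_2(q_0).r$ together with the sporadic maximal subgroups arising only for $q=p$ prime are easily verified to satisfy the same bound once $q$ is not too small.

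For each remaining overgroup $M\supseteq H$, the relevant refinement of Proposition~\ref{prop:St} is that every non-identity element $h\in H$ has $G$-centralizer of Lie type $A_2$, $A_1\tilde A_1$, or smaller, so $|C_G(h)|_p\le q^3$, and in fact $\le q^2$ in the $(\SL_2\ti\SL_2).2$-case. Combined with the standard structural bounds on maximal $p'$-subgroups of $\SL_3(q),\SU_3(q)$ (torus normalisers of order at most $12(q\pm1)^2$, or Singer normalisers of order $3(q^2\mp q+1)$) and of $\SL_2(q)\ti\SL_2(q)$ (products of dihedral torus-normalisers of order at most $8(q+1)^2$), the crude inequality
$$|H|\cd\langle\St|_H,1_H\rangle \;\ge\; q^6 - (|H|-1)\cd q^3$$
(respectively with $q^2$ in place of $q^3$) is positive for all $q$ beyond a small explicit threshold. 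The finitely many small cases $q\in\{3,4,5,7\}$ are then settled by direct computation with the generic character table of $G_2(q)$ due to Chang--Ree and Enomoto, or equivalently via the decomposition matrices in \GAP.

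The main obstacle will be the bookkeeping for $(\SL_2(q)\ti\SL_2(q)).2$ at the smallest odd values of~$q$, where the maximal $p'$-subgroup can genuinely exceed $q^3$ and one really does need the sharper exponent~$2$ coming from the $A_1\tilde A_1$-centraliser bound; a secondary point is to verify that the sporadic subgroups occurring only for $q=p$ prime (such as $\PSU_3(3)<G_2(3)$ or $J_1<G_2(11)$) do not produce an unforeseen maximal $p'$-subgroup that evades the generic analysis.
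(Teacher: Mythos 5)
Your overall strategy---restrict the Steinberg character to the large $p'$-subgroups, invoke Lemmas~\ref{lem:fp} and~\ref{lem:order}, and refine the crude estimate of Proposition~\ref{prop:St}---is exactly the paper's, but the quantitative execution has two genuine problems. First, the crude inequality $|H|\,\langle\St|_H,1_H\rangle\ge q^6-(|H|-1)q^3$ is only useful when $|H|\le q^3$, and the largest candidate, the torus normaliser $(q+1)^2.D_{12}$ of order $12(q+1)^2$ (which sits inside $\SU_3(q).2$), violates this bound for every prime power $q\le 13$; likewise the fixed subgroups $2^3.\PSL_3(2)$ (order $1344$) and $G_2(2)$ (order $12096$), occurring for $q=p$ prime, violate it for all relevant primes up to $11$, respectively up to $19$. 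So your residual list $q\in\{3,4,5,7\}$ is too short, and the fallback you propose is not available there: the decomposition matrices of $G_2(q)$ in defining characteristic are in \cite{GAP} only for $q\le5$, so the cases $q=7,8,9,11,13$ (and the larger primes needed for $G_2(2)$) would all remain open under your estimates.

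Second, the refinement you propose for $(\SL_2(q)\times\SL_2(q)).2$---that every non-identity element $h$ of it has $|C_G(h)|_p\le q^2$---is false: elements of order $3$ in a maximal torus $(q\pm1)^2$ of that subgroup can have centraliser $\SL_3(q)$ or $\SU_3(q)$, hence $|C_G(h)|_p=q^3$. The refinement that actually closes the argument for all $q>5$ is different in kind: the only elements $s$ with $|\St(s)|=q^3$ are those of order $3$; involutions have $|C_G(s)|_p=q^2$ but their centraliser is of type $A_1\tilde A_1$ of $\FF_q$-rank~$2$, so $\St(s)=+q^2>0$ and they contribute with the \emph{right} sign; and all other non-identity elements satisfy $|\St(s)|\le q$. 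One then bounds the number of order-$3$ elements in each candidate $H$ (at most $2(q+1)^2$ in the torus normaliser, and an explicit count in the four fixed groups), after which the lower bound for $|H|\,\langle\St|_H,1_H\rangle$ is positive for all $q>5$, leaving only $q\le5$ for \GAP. Without this sign observation and the count of order-$3$ elements, your argument does not go through in exactly the range of $q$ where the statement is hardest.
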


\begin{proof}
The cases with $q\le5$ are out using \GAP.
According to the lists in \cite[Tab.~8.30, 8.41 and~8.42]{BHR} the maximal
order $p'$-subgroups $H$ of $G$ are among the torus normaliser of order
$12(q+1)$ and then $2^3.\PSL_3(2)$, $\PSL_2(13)$, $\PSL_2(8)$ and $G_2(2)$.
The bound obtained in Proposition~\ref{prop:St} is not quite large enough to
exclude these for all $q>5$, so we refine the argument.   \par
Namely, the
only elements $s\in G$ with $|C_G(s)|_p=q^3$ are elements of order~3, so only
such elements of $H$ can contribute $-q^3$ to the scalar product. The only
elements with $|C_G(s)|_p=q^2$ are involutions, but here $C_G(s)$ has type
$A_1^2$ of $\FF_q$-rank~2, so $\St(s)>0$. All other elements $1\ne s\in G$
have $|\St(s)|\le q$. With these improved estimates it is straightforward to
check that the four individual groups listed above are not relevant for $q>5$.
Finally, at most $2(q+1)^2$ of the elements in the torus normaliser $H$ have
order~3 (unless $p=2$ in which case we only consider a maximal odd order
subgroup). Then using the above argument one sees that again $H$ is not
relevant for $q>5$. 
\end{proof}

\begin{lem}
 The groups $G=\tw3D_4(q)$ with $q=p^f$ do not satisfy $(I_p)$.
\end{lem}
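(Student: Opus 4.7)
The plan is to follow the pattern of the three preceding lemmas: combine Proposition~\ref{prop:St}, which for $G = {}^3D_4(q)$ gives $m(G) = 8$, with Lemmas~\ref{lem:order} and~\ref{lem:fp}. This reduces the task to verifying that every $p'$-subgroup $H \le G$ has order at most $q^8$; once that bound is known, $\St|_H$ must contain the trivial character, whence $(I_p)$ is forbidden because the Steinberg character is of $p$-defect zero.

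To produce the bound I would invoke Kleidman's classification of the maximal subgroups of $^3D_4(q)$ (J.~Algebra~115 (1988)). These split into a few families: the two classes of maximal parabolic subgroups; the large reductive subgroups $G_2(q)$, $\PGL_3(q)$, $\PGU_3(q)$, and the involution centraliser of shape $(\SL_2(q^3) \circ \SL_2(q))\cd 2$; the Ree subgroup $^2G_2(q)$ when $q$ is an odd power of~$3$; the subfield subgroups $^3D_4(q_0)$ with $q = q_0^r$, $r$ prime; and a short list of torus normalisers, notably the Coxeter torus normaliser $(q^4-q^2+1)\cd 4$, together with normalisers of shape $(q^2 \pm q + 1)^2\cd W_0$ and $(q \pm 1)(q^3 \pm 1)\cd W_0'$ with $W_0, W_0'$ of order bounded by a small absolute constant.

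For each family I would bound the order of a maximal $p'$-subgroup. In a parabolic $P = UL$ with $U$ the unipotent radical, any $p'$-subgroup intersects $U$ trivially and so embeds in a Levi complement, whose $p'$-part has order at most $(q^3+1)(q^2-1)$. The reductive maximal subgroups in the second family are themselves of Lie type in characteristic~$p$, so the Borel--Tits theorem limits their $p'$-subgroups to torus normalisers, of polynomial degree at most~$3$ in $q$; the Ree subgroup $^2G_2(q)$ is directly covered by the preceding lemma. For subfield subgroups $^3D_4(q_0)$ with $q = q_0^r$, $r \ge 2$, the same case analysis applied recursively produces $p'$-subgroups of order $O(q_0^4) \le O(q^2)$. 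The remaining torus normalisers all have order $O(q^4)$ by inspection. All of these are comfortably below $q^8$.

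The main obstacle is the smallest values of $q$, where the estimate from Proposition~\ref{prop:St} is tight and Kleidman's list contains some exceptional isomorphisms. I would discharge $q = 2$ (and, if necessary, $q = 3$) by direct computation in \GAP, using the known character table and decomposition matrices of ${}^3D_4(2)$; one checks case by case that no $p'$-subgroup yields the $1$-PIM as a permutation module. Once those small base cases are settled, the generic bound from the previous paragraph closes the lemma.
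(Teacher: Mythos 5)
Your proposal follows essentially the same route as the paper: invoke Proposition~\ref{prop:St} with $m({}^3D_4)=8$, bound the order of every $p'$-subgroup of $G$ by $q^8$ using the known list of maximal subgroups (the paper cites \cite[Tab.~8.51]{BHR}, which records exactly that the $p'$-subgroups of maximal order sit in the torus normaliser $(q^2+q+1)^2.\SL_2(3)$ or in $G_2(q)$), and dispose of $q=2$ with \GAP. One justification in your write-up is wrong as stated: the Borel--Tits theorem constrains subgroups that normalise a non-trivial $p$-subgroup, so it says nothing about $p'$-subgroups of the reductive maximal subgroups, and indeed $G_2(q)$ contains $p'$-subgroups such as $2^3.\PSL_3(2)$, $\PSL_2(13)$ and $G_2(2)$ that lie in no torus normaliser. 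The needed conclusion (that all $p'$-subgroups of $G_2(q)$, $\PGL_3(q)$, $\PGU_3(q)$ and the involution centraliser have order well below $q^8$) is nevertheless true, but it must be read off from the actual maximal subgroup structure of those groups (as in the preceding $G_2$ lemma) rather than deduced from Borel--Tits; with that repair your argument closes exactly as the paper's does.
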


\begin{proof}
Using \GAP\ we may assume $q\ge3$. By \cite[Tab.~8.51]{BHR} $p'$-subgroups of
maximal order are among the torus normaliser $(q^2+q+1)^2.\SL_2(3)$
(respectively an odd order subgroup $(q^2+q+1)^2.3$ if $p=2$), and the
$p'$-subgroups of $G_2(q)$. The claim follows immediately from
Proposition~\ref{prop:St}.
\end{proof}

\begin{lem}
 The groups $G=\tw2F_4(q^2)'$ with $q^2=2^{2f+1}\ge2$ do not satisfy $(I_2)$.
\end{lem}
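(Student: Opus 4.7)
The plan is to follow exactly the pattern of the four preceding lemmas in this section, since $\tw2F_4(q^2)'$ is the last outstanding family of small-rank exceptional groups. First, I would dispose of the degenerate case $q^2=2$, i.e.\ the Tits group $\tw2F_4(2)'$: its $2$-modular decomposition matrix and the full list of its maximal subgroups are available in \GAP, so the nonexistence of a $2'$-subgroup $H$ with $\Ind_H^G(1_H)=\Phi_{1_G}$ is a direct check (alternatively, invoke Lemma~\ref{lem:order} together with the observation that the Steinberg character of $\tw2F_4(2)'$ already contains the trivial character upon restriction to any of its $2'$-subgroups).

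For $q^2=2^{2f+1}\ge 8$, I would invoke Malle's classification~\cite{Ma91} of the maximal subgroups of $\tw2F_4(q^2)$, which was already used in the non-defining analysis of Theorem~\ref{thm:exc}. Excluding the two maximal parabolic subgroups (whose $2'$-sections are Levi factors $\tw2B_2(q^2)\times(q^2-1)$, already too small), the maximal subgroups of $G$ fall into a short list of subsystem and torus normaliser types: $\tw2B_2(q^2)\wr 2$, $\PSU_3(q^2){:}2$, $\PGU_3(q^2){:}2$, and normalisers of Coxeter-type tori of orders such as $12(q^4-q^2+1)$, $4(q^2-1)(q^2+\sqrt{2}q+1)$, and similar. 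The $2'$-parts of all these subgroups are bounded polynomially in $q^2$ of degree at most $6$.

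The key estimate is then Proposition~\ref{prop:St} applied to $G=\tw2F_4(q^2)$. The Steinberg character has degree $q^{24}$ (the number of positive roots in the twisted root system, with $q^2$ the parameter), and the only elements $s\in G$ where $|\St(s)|=|C_G(s)|_2$ is at all large are involutions, whose centraliser in $G$ is of type $\Sp_4(q^2)$ (a rank-$2$ Levi in characteristic~$2$) and hence has $2$-part $q^{8}$. Every other nontrivial semisimple element has $|\St(s)|\le q^{2m'}$ with $m'\le 4$, as one reads off from the extended Dynkin diagram via Borel--de~Siebenthal. This yields a bound $m=m(\tw2F_4)\ge 12$ (well above the $6$ needed), so that for any $2'$-subgroup $H\le G$ with $|H|\le q^{2m}$ one has $|H|\,\langle\St|_H,1_H\rangle\ge\St(1)-(|H|-1)\St(1)/q^{2m}>0$. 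Hence $\St|_H$ contains the trivial character, and Lemma~\ref{lem:fp} (applied to the defect-zero simple module afforded by $\St$) rules out $(I_2)$.

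The main obstacle is not the Steinberg estimate itself—the computation is routine once the centraliser structure of involutions and the list of semisimple conjugacy classes are in hand—but rather the bookkeeping needed to confirm that \emph{every} maximal subgroup of $\tw2F_4(q^2)$ from~\cite{Ma91} has its largest $2'$-section comfortably below~$q^{2m}$. For the torus normalisers this is immediate, and for $\tw2B_2(q^2)\wr 2$, $\PSU_3(q^2){:}2$ and $\PGU_3(q^2){:}2$ a direct order estimate suffices once $q^2\ge 8$, so the argument closes uniformly.
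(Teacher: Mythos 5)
Your overall strategy -- bound the order of any odd-order subgroup $H$, then use the restriction of the Steinberg character as in Proposition~\ref{prop:St} together with Lemma~\ref{lem:fp}, with \GAP\ for the Tits group -- is exactly the strategy of the paper, and it does work. Where you differ is in how the bound on $|H|$ is obtained. You propose to enumerate all maximal subgroups of $\tw2F_4(q^2)$ from \cite{Ma91} and bound the odd part of each; the paper instead observes that any odd-order subgroup is solvable (Feit--Thompson), hence lies in a local subgroup, and by \cite{Ma91} therefore in a torus normaliser, which immediately gives $|H|\le 3(q^2+\sqrt{2}q+1)^2$ with no case analysis at all. The paper's shortcut buys a one-line uniform bound; your route is viable but carries precisely the bookkeeping burden you flag at the end, and as written your list of maximal subgroups is incomplete: \cite{Ma91} also contains $\Sp_4(q^2){:}2$ and the subfield subgroups $\tw2F_4(q_0^2)$, whose largest odd-order subgroups you would also need to bound (they are of order $O(q^4)$ and $O(q_0^4)$ respectively, so the conclusion survives, but the cases must be covered for the argument to close).

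One conceptual slip should be corrected. You justify the exponent $m$ by saying the dangerous elements are involutions with centraliser of type $\Sp_4(q^2)$. In defining characteristic~$2$ involutions are unipotent, the Steinberg character vanishes on them, and in any case a $2'$-subgroup $H$ contains none; the elements that matter in the sum $\langle\St|_H,1_H\rangle$ are the nontrivial odd-order (semisimple) elements. Their centralisers ($\SU_3(q^2)$, $\Sp_4(q^2)$, $\tw2B_2(q^2)\times\tw2B_2(q^2)$, tori, \dots) have $2$-part at most $q^8$ against $\St(1)=q^{24}$, so one may take $m=16$; your claimed $m\ge12$ is therefore safe, but for the wrong reason. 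With that repair and the completed subgroup list, your argument is a correct, if longer, alternative to the proof in the text.
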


\begin{proof}
Any odd order subgroup $H$ of $G$ is (of course) solvable and hence local, and
lies inside torus normalisers. From \cite{Ma91} it follows that
$|H|\le 3(q^2+\sqrt{2}q+1)^2$. Using \GAP\ to deal with the case $q^2=2$ we
can then conclude with Proposition~\ref{prop:St}.
\end{proof}

\begin{lem}
 The groups $G=F_4(q)$ with $q=p^f$ do not satisfy $(I_p)$.
\end{lem}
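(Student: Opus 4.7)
The plan is to follow the same template as the preceding lemmas: dispose of small $q$ by direct computation, and for larger $q$ combine a classification of large maximal subgroups of $G=F_4(q)$ with Proposition~\ref{prop:St}, which for $G$ forbids property~$(I_p)$ with respect to any $p'$-subgroup of order at most $q^8$. The case $q=2$ is settled from the character table of $F_4(2)$ in \GAP.

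For $q\ge3$, by Lemma~\ref{lem:order} a candidate $p'$-subgroup $H$ must attain the maximum order among $p'$-subgroups of $G$, hence lies in a maximal subgroup $M$ of correspondingly large order. By the main theorem of \cite{LS87} (Table~1 there), the possibilities for $M$ are the maximal parabolic subgroups and the subsystem-type subgroups $B_4(q)$, $C_3(q)\times A_1(q)$, $D_4(q).\fS_3$, $\tw3D_4(q).3$, the subfield subgroups $F_4(q_0)$ with $q=q_0^s$, and $\tw2F_4(q)$ when $p=2$. Every such $M$ has order divisible by~$p$, so it remains to bound the order of its largest $p'$-subgroup. Using the earlier results on classical groups in defining characteristic (Propositions~\ref{prop:L2} and~\ref{prop:U3}, Theorems~\ref{thm:PSp}, \ref{thm:PSO 2n+1} and~\ref{thm:PSO 2n}) for the parabolic and classical subsystem subgroups, together with the two preceding lemmas for $\tw3D_4(q)$ and $\tw2F_4(q)$, the inductive bound reduces to torus normalisers: in every case the largest $p'$-subgroup of $M$ has order at most $|W(F_4)|(q+1)^4=1152(q+1)^4$.

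For $q\ge12$ this estimate is at most $q^8$ and Proposition~\ref{prop:St} applies directly. For the remaining values $3\le q\le11$ the crude bound is too weak, and one refines the Steinberg-restriction argument as in the $G_2(q)$ lemma. Inspecting the extended Dynkin diagram, the only semisimple elements $s\in G$ with $|\St(s)|>q^8$ have $C_G(s)$ of type $B_4$ (giving $|\St(s)|=q^{16}$, necessarily involutions in odd characteristic) or $C_3A_1$ (giving $|\St(s)|=q^{10}$, also involutions), while all other non-identity elements satisfy $|\St(s)|\le q^8$. Counting the involutions of each type inside the finitely many explicit candidate $p'$-subgroups (torus normalisers, subgroups of $\tw2F_4(q)$ when $p=2$, small subfield copies, etc.) gives $\langle\St|_H,1_H\rangle>0$, whence Lemma~\ref{lem:fp} rules out property $(I_p)$ because the Steinberg module is a non-trivial simple $kG$-module.

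The main obstacle is precisely this refined Steinberg-restriction calculation for the intermediate $q$: the uniform bound from Proposition~\ref{prop:St} does not quite suffice, so one has to sharpen it by tracking separately the two conjugacy classes of involutions with large $|\St(s)|$ and bounding their intersection with each candidate~$H$. Once this bookkeeping is carried out, the structure of the proof is entirely parallel to the Suzuki-, Ree-, $G_2$- and $\tw3D_4$-cases above.
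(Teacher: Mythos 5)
There is a genuine gap in your reduction step. You invoke Lemma~\ref{lem:order} to conclude that a candidate $H$ has maximal order among $p'$-subgroups and ``hence lies in a maximal subgroup $M$ of correspondingly large order'', and you then read the possibilities for $M$ off \cite[Tab.~1]{LS87}. That reduction is the one used in the non-defining-characteristic sections, where $|H|$ is at least the order $q^{24}$ of a Sylow $r$-subgroup and so any maximal overgroup is forced into the Liebeck--Saxl list. In defining characteristic it breaks down completely: a $p'$-subgroup of $F_4(q)$ has order of magnitude only about $q^4$ to $q^8$ (the torus normaliser $(q+1)^4.W(F_4)$ being a typical candidate), far below the order threshold of \cite{LS87}, so its maximal overgroups need not occur in Table~1 there. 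Your list therefore omits precisely the dangerous candidates, namely the non-generic almost simple maximal subgroups of $F_4(q)$ (groups such as $\tw3D_4(2)$, small alternating groups, $\PSL_2$'s of bounded degree, \dots), several of which are $p'$-groups of order exceeding $1152(q+1)^4$ for suitable $p$. The paper closes exactly this gap by citing Craven's classification \cite{Cr21}, which determines the candidates for maximal $p'$-subgroups other than torus normalisers; these have order divisible by $6$ and bounded essentially independently of $q$, so Proposition~\ref{prop:St} eliminates them for $q\ge5$. Your assertion that ``in every case the largest $p'$-subgroup of $M$ has order at most $1152(q+1)^4$'' is thus both unproved and not obtainable by the route through \cite{LS87}.

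The second half of your argument --- refining the Steinberg-restriction estimate by isolating the semisimple classes with large $|\St(s)|$ --- does match the paper's treatment of the torus normaliser, but two details are off. The paper only needs that the involutions with centraliser of type $B_4$ attain $q^{16}$ while every other non-trivial semisimple class satisfies $|\St(s)|\le q^{10}$; your finer claim that all classes other than the $B_4$- and $C_3A_1$-involutions satisfy $|\St(s)|\le q^8$ is false, since centralisers of type $B_3T_1$ and $C_3T_1$ give $q^9$. Also your threshold $q\ge12$ for $1152(q+1)^4\le q^8$ is inaccurate (the inequality already holds for $q\ge7$), though that is harmless. Finally, for $q=2$ one needs the decomposition matrix of $F_4(2)$, not merely its character table, to see that the dimension of the $1$-PIM fails to divide the group order.
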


\begin{proof}
Again the decomposition matrix of $F_4(2)$ is available in \GAP\ and shows the
dimension of the 1-PIM does not divide the group order. So we assume $q\ge3$.
Candidates for maximal $p'$-subgroups apart from torus normalisers are
determined in \cite{Cr21}. All of these have orders divisible by 6, and for
$q\ge5$ they are ruled out by Proposition~\ref{prop:St}. The same result
shows that the maximal order torus normaliser of type $(q+1)^4.W(F_4)$
is too small, using that only involutions $s\in G$ attain $|C_G(s)|_p=q^{16}$
while all other semisimple elements $1\ne s\in G$ have
$|\St(s)|=|C_G(s)|_p\le q^{10}$.
\end{proof}

While we expect that with some more work the remaining exceptional groups can
also be dealt with along those same lines, the bound in
Proposition~\ref{prop:St} is definitely too weak to handle the linear and
unitary groups.

%%%%%%%%%%%%%%%%%%%%%%%%%%%%%%%%%%%%%%%%%%%%%%%%%%%%%%%%%%%%%%%%%%%%%%%%%

\end{document}